\let\mathcal\mathscr
\theoremstyle{plain}
\numberwithin{equation}{section}
\newtheorem{prop}[equation]{\propname}
\newtheorem{theo}[equation]{\theoname}
\newtheorem{coro}[equation]{\coroname}
\newtheorem{lemm}[equation]{\lemmname}
\theoremstyle{definition}
\theoremstyle{remark}
\newtheorem{defi}[equation]{\definame}
\newtheorem{rema}[equation]{\remaname}
\newcommand{\moins}{\mathbin{\fgebackslash}}
\let\cal\mathcal
\let\goth\mathfrak
\def\bpart{{\boldsymbol\partial}}
\def\Tp{{\mathbb T}(\R^+)} \def\Tc{{\mathbb T}(\C)} \def\Tr{{\mathbb T}(\R)}
\def\Tpo{{\mathbb T}^0(\R^+)} \def\Tco{{\mathbb T}^0(\C)} \def\Tro{{\mathbb T}^0(\R)}
\def\F{{F(\phi,{\cal B},y)}}
\def\Q{{\bf Q}} \def\Z{{\bf Z}}
\def\C{{\bf C}}
\def\N{{\bf N}}
\def\A{{\bf A}}
\def\R{{\bf R}}
\def\O{{\cal O}}
\def\dual{{\boldsymbol *}}
\def\bmu{{\boldsymbol\mu}}
\def\beps{{\boldsymbol\epsilon}}
\def\epsilon{\varepsilon}
\let\emptyset\varnothing
\let\over\@@over
\begin{document}
\title[M\'ethode de Shintani]
{La m\'ethode de Shintani et ses variantes}
\author{Pierre Colmez}
\address{CNRS, IMJ-PRG, Sorbonne Universit\'e, 4 place Jussieu,
75005 Paris, France}
\email{pierre.colmez@imj-prg.fr}
\dedicatory{\`A la m\'emoire de Nicolas Bergeron}
\begin{abstract}
Nous donnons plusieurs variantes de la m\'ethode de Shintani
de d\'ecomposition en c\^ones simpliciaux d'un tore modulo
l'action d'un r\'eseau, et explorons quelques
applications \`a l'\'etude de fonctions~$L$ de Hecke aux entiers.
En particulier, nous prouvons la formule analytique du nombre de classes pour un corps
totalement r\'eel, directement en $s=0$.
\end{abstract}
\begin{altabstract}
We give several versions of Shintani's method for the
decomposition into simplicial cones of the fundamental domain
of a torus modulo a lattice, and we investigate
some applications to the study of Hecke $L$-functions at
integer points.
In particular, we prove the analytic class number formula for a totally real field,
directly at $s=0$.
\end{altabstract}
\setcounter{tocdepth}{1}

\maketitle
\stepcounter{tocdepth}
{\Small
\tableofcontents
}

\section*{Introduction}
Soit $F$ un corps totalement r\'eel.  Si $\chi$ est un caract\`ere de
Hecke de $F$, d'ordre fini, Hecke~\cite{hecke2} a prouv\'e que la fonction $L(\chi,s)$ associ\'ee
\`a $\chi$ admet un prolongement analytique \`a tout le plan complexe, holomorphe
en dehors d'un p\^ole simple en $s=1$ si $\chi=1$, et Klingen~\cite{klingen}, Siegel~\cite{siegel} ont prouv\'e
que les valeurs aux entiers~$\leq 0$ de $L(\chi,s)$ sont rationnelles (au sens que
$L(\chi,-k)\in\Q(\chi)$ et $L(\chi^\sigma,-k)=L(\chi,-k)^\sigma$ pour tout $\sigma\in{\rm Gal}(\Q(\chi)/\Q)$).

Shintani~\cite{Sh} a trouv\'e une m\'ethode alternative pour d\'emontrer ces deux r\'esultats; cette m\'ethode
repose sur l'existence de d\'ecompositions en c\^ones simpliciaux de domaines fondamentaux 
de $(\R\otimes F)_{>0}$ modulo
l'action du groupe $U_F^+$ des unit\'es totalement positives de $\O_F$ (d\'ecompositions de Shintani).

Dans cet article nous donnons une variante de la m\'ethode de Shintani qui permet de remplacer
$U_F^+$ par n'importe quel sous-groupe libre $V$ d'indice fini du groupe des unit\'es $U_F$ (pas
n\'ecessairement totalement positives).  Dans le cas o\`u $V\subset U_F^+$,
cette variante raffine la notion de {\og signed cone decomposition\fg}
de~\cite{DF2}.  
Elle permet d'\'etudier l'ordre d'annulation de $L(\chi,s)$ en un entier n\'egatif directement,
sans utiliser l'\'equation fonctionnelle reliant
$L(\chi,s)$ et $L(\chi^{-1},1-s)$ et les propri\'et\'es de la fonction $\Gamma$ d'Euler; on en
d\'eduit par exemple une preuve de la formule analytique du nombre de classes
\begin{equation}\label{classes}
\lim_{s\to 0}s^{1-[F:\Q]}\zeta_F(s)=-h_F\,{\rm Reg}(U_F)
\end{equation}
directement en $s=0$.

\subsection{D\'ecompositions de Shintani}
On note 
$${\rm Tr}:\R\otimes F\to\R\quad{\rm et}\quad {\rm N}:(\R\otimes F)^\dual\to\R^\dual$$
les applications trace et norme et $\epsilon:\R^\dual\to\{\pm1\}$ le caract\`ere signe.
Remarquons que, si ${\rm H}_F:={\rm Hom}(F,\R)$, alors
$$\R\otimes F\cong \R^{{\rm H}_F}=\prod_{\tau\in {\rm H}_F}\R^\tau$$
On note $(e_\tau)_\tau$ la base naturelle de $\R\otimes F$ fournie par ces identifications.

Une d\'ecomposition de Shintani usuelle (convenablement modifi\'ee pour ne faire intervenir
que des c\^ones de dimension~$n$) se traduit par une identit\'e
du genre (o\`u ${\bf 1}_X$ est la fonction caract\'eristique de $X$):
\begin{equation}\label{Si1}
{\bf 1}_{(\R\otimes F)_{>0}}=\sum_{v\in V}\sum_B {\bf 1}_{vC_B}
\end{equation}
 o\`u les $B$
qui interviennent sont des bases $(f_{B,1},\dots,f_{B,n})$ de $F$ sur $\Q$, et $C_B$ est un c\^one
dans $(\R\otimes F)_{>0}$ engendr\'e par les $f_{B,i}$.

On peut prendre la transform\'ee de Laplace des deux membres, et obtenir une identit\'e
(o\`u $(f_{B,1}^\vee,\dots,f_{B,n}^\vee)$ est la base de $F$ sur $\Q$ duale de $(f_{B,1},\dots,f_{B,n})$
pour la forme $(x,y)\mapsto {\rm Tr}\,xy$):
\begin{equation}\label{Si2}
\frac{1}{{\rm N}(z)}
=\sum_{v\in V}\sum_B\frac{\det B}{{\rm Tr}(f^\vee_{B,1}vz)\cdots {\rm Tr}(f^\vee_{B,n}vz)}
\end{equation} 
ce qui fournit une version des d\'ecompositions de Shintani qui a un sens
aussi dans le cas non totalement r\'eel (version utilis\'ee dans~\cite{Cz2,CS} pour
\'etudier les valeurs sp\'eciales de fonctions $L$ de Hecke d'extensions de corps quadratiques imaginaires;
remarquons qu'appliquer $\big(\prod_\tau\frac{\partial}{\partial z_\tau}\big)^{k-1}$ aux deux membres
forunit aussi une identit\'e pour $\frac{1}{{\rm N}(z)^k}$).

Si on reste dans le cadre totalement r\'eel, on peut prendre la transform\'ee de Fourier (par rapport \`a $t$,
en fixant $y$) des deux membres de (\ref{Si2})
 pour $z=t+iy$; on obtient alors une version de la
notion de d\'ecomposition de Shintani sous la forme
\begin{equation}\label{Si3}
\chi_{(e_\tau)_\tau}(x,y)=\sum_{v\in V}\sum_B{\rm N}(v)\chi_{vB}(x,y)
\end{equation} 
o\`u, si $y$ est fix\'e, $\chi_{B}(x,y)$ est\footnote{Par exemple, si ${\rm N}(y)\neq 0$,
alors $\chi_{(e_\tau)_\tau}(x,y)$ 
est $\pm$ la fonction caract\'eristique
du c\^one engendr\'e par les $y_\tau e_\tau$ (o\`u $\pm=\epsilon({\rm N}(y))$).}
 $\pm {\bf 1}_{C_B(y)}$, et
$C_B(y)$ est un c\^one engendr\'e par les $\pm f_{B,i}$ (o\`u les $\pm$ devant ${\bf 1}_{C_B(y)}$
et $f_{B,i}$ d\'ependent de $y$, cf.~\no\ref{SSS9}).

C'est cette derni\`ere version qui est d\'evelopp\'ee dans cet article.

\subsection{Application aux fonctions~$L$}
Le (ii) du th.\,\ref{Si6} et le (i) du th.\,\ref{Si7} ci-dessous
expliquent comment on utilise une d\'ecomposition de Shintani~(\ref{Si3}). Le reste
du th.\,\ref{Si7} s'en d\'eduit par des m\'ethodes standard mais un peu plus d\'elicates
que d'habitude car la convergence de la transform\'ee de Mellin de $F(\phi,{\cal B},y)$ n'est
pas tr\`es bonne.

Le choix d'un syst\`eme de repr\'esentants du groupe des classes d'id\'eaux de $F$
permet de d\'ecomposer $L(\chi,s)$ comme une somme de fonctions du type
$L(\phi,\psi,s)$, o\`u:

$\bullet$ $\psi:(\R\otimes F)^\dual\to\{\pm1\}$ est un caract\`ere continu,

$\bullet$ $\phi: F\to \C$ est constante modulo ${\goth a}$ et \`a support dans ${\goth b}$,
o\`u ${\goth a},{\goth b}$ sont des id\'eaux fractionnaires de $F$, et v\'erifie
$\phi(v\alpha)=\psi(v)\phi(\alpha)$ pour tous $\alpha\in F$ et $v\in U_F$.

$\bullet$ $L(\phi,\psi,s)=\sum_{\alpha\in F^\dual/U_F}\psi(\alpha)\phi(\alpha)|{\rm N}(\alpha)|^{-s}$.

\vskip2mm
Une d\'ecomposition du type~(\ref{Si2}) fournit une identit\'e du genre
$$\sum_{\alpha\in (\alpha_0+{\goth a})/V}\frac{1}{{\rm N}(\alpha)}=
\sum_B\sum_{\alpha\in \alpha_0+{\goth a}}
\frac{\det B}{{\rm Tr}(f^\vee_{B,1}v\alpha)\cdots {\rm Tr}(f^\vee_{B,n}v\alpha)}$$
et comme on somme sur un r\'eseau on voit, au moins formellement,
 appara\^{\i}tre des produits de $\pi {\rm cotg}\,\pi r$,
avec $r\in\Q$ (o\`u des s\'eries d'Eisenstein en des points de torsion si on part d'un corps
quadratique imaginaire).  On en d\'eduit des r\'esultats d'alg\'ebricit\'e
mais, comme les s\'eries ne convergent pas absolument, il faut
faire un petit peu attention.

\vskip2mm
La m\'ethode de Shintani traditionnelle part de l'identit\'e (pour $\alpha\in F_{>0}^\dual$):
$$\frac{1}{{\rm N}(\alpha)^s}=
\frac{1}{\Gamma(s)^n}\int_{(\R\otimes F)_{>0}}q^\alpha \prod_\tau y_\tau^{s-1}\,dy_\tau,
\quad{\text{o\`u $q^\alpha:=e^{-{\rm Tr}(\alpha y)}$}}$$
  Une d\'ecomposition modulo $U_F^+$ en c\^ones fournit
une identit\'e du type 
$$\sum_{\alpha\in F_{>0}^\dual/U_F^+}\phi(\alpha)q^\alpha=F(\phi,y)$$
o\`u $F(\phi,y)$ est ``\`a d\'ecroissance rapide'' \`a l'infini de $(\R\otimes F)_{>0}$,
et est une combinaison lin\'eaire de $\frac{q^\alpha}{(1-q^{f_1})\cdots(1-q^{f_n})}$
o\`u $\alpha,f_1,\dots,f_n\in F_{>0}^\dual$.
L'utilisation des d\'ecompositions~(\ref{Si3}) repose sur le m\^eme point de d\'epart, mais
il y a des diff\'erences sensibles pour le reste des arguments.

\begin{theo}\phantomsection\label{Si6}
Soient $\psi,\phi$ comme ci-dessus, soit $\Lambda$ le $\Z$-module\footnote{Il est de type fini
car ${\goth b}/{\goth a}$ est un ensemble fini.}
engendr\'e par les $\phi(\alpha)$, soit $V$ un sous-groupe libre d'indice fini de
$U_F$, et soit ${\cal B}=\sum n_BB$ v\'erifiant l'identit\'e~{\rm (\ref{Si3})}.

{\rm (i)}
Il existe 
$$F(\phi,{\cal B},y)\in\Lambda\big[q^\alpha,\tfrac{1}{1-q^\alpha},\ \alpha\in F^\dual\big]$$
tel que, 
pour tout $y$ qui n'est pas un p\^ole de $F(\phi,{\cal B},y)$,
 $$\sum_{\alpha\in F}\phi(\alpha)\sum_B n_B\chi_{B}(\alpha,y)q^\alpha=F(\phi,{\cal B},y)$$

{\rm (ii)}
Soit
$$F^\dual(\phi,{\cal B},y)=F(\phi,{\cal B},y)-\phi(0)\chi_{\cal B}(0,y)$$
Pour presque tout $y\in (\R\otimes F)^\dual$
$$\sum_{v\in V}\psi(v){\rm N}(v)F^\dual(\phi,{\cal B},vy)=
\epsilon({\rm N}(y))
\hskip-5mm \sum_{{\beta\in F^\dual }\atop{\beta^{-1}y\in(\R\otimes F)_{>0}}}
\phi(\beta)e^{-{\rm Tr}(\beta y)}.$$
\end{theo}

\begin{rema}\phantomsection\label{Si11}
Une recette de Cassou-Nogu\`es~\cite{CN2} permet de lisser $\phi$ pour que $F(\phi,{\cal B},y)$
devienne ${\cal C}^\infty$ sur $\R^n$ sans trop modifier $L(\phi,\psi,s)$ (cf.~rem.\,\ref{smoo2.1}),
ce qui permet de supposer que $F(\phi,{\cal B},y)$ est ${\cal C}^\infty$ sur $\R^n$ pour les
applications aux fonctions~$L$.
\end{rema}

\begin{theo}\phantomsection\label{Si7}
Supposons que $F(\phi,{\cal B},y)$
est ${\cal C}^\infty$ sur $\R^n$.

{\rm (i)}
Si $0<{\rm Re}(s)<\frac{1}{n}$,
$$L(\phi,\psi,s)={1\over{[U_F:V]}}{1\over{\Gamma(s)^n}}\int_{\R\otimes F}
F(\phi,{\cal B},y)\psi(y)\epsilon({\rm N}(y))|{\rm N}(y)|^{s-1}dy$$

{\rm(ii)}
$L(\phi,\psi,s)$ admet un prolongement analytique \`a tout
le plan complexe.

{\rm (iii)}  
Si $k\in{\bf N}$, soit $I_k=\{\tau\in{\rm H}_F,\ \psi_{|\R^\tau}=\epsilon^k\}$.
Alors, si $k\neq0$ ou si 
$\phi(0)=0$, la fonction $L(\phi,\psi,s)$ a un
z\'ero d'ordre~$\geq |I_k|$ en $s=-k$ et
$$\lim_{s\rightarrow -k}(s+k)^{-|I_k|}L(\phi,\psi,s)=
{{2^{n-|I_k|}}\over{[U_F:V]}}\int_{{\bf R}^{I_k}}
\nabla^k F(\phi,{\cal B},y)\prod_{\tau \in I_k}{{dy_\tau }\over{y_\tau }},$$
o\`u $\nabla=\prod_\tau{\partial\over{\partial y_\tau}}$ et
${\bf R}^{I_k}$ est le sous-${\bf R}$-espace vectoriel de dimension
$|I_k|$ d'\'equation $y_\tau=0$ si $\tau\notin I_k$.
\end{theo}

\begin{rema}\phantomsection\label{Si12}
Si $k=0$ et $\phi(0)\neq 0$, il faut tenir compte du terme $\phi(0)\chi_{\cal B}(0,y)$ qui vaut $\pm \phi(0)$
sur un certain c\^one, et contribue un terme ayant un z\'ero d'ordre $n-1$ au lieu du $n$ attendu,
le coefficient du terme dominant \'etant le volume de l'intersection
de ce c\^one et de l'hyperbolo\"{\i}de $|{\rm N}(z)|=1$ (c'est de l\`a que sort le facteur
${\rm Reg}(U_F)$ de la formule~(\ref{classes})).
\end{rema}

\begin{rema}\phantomsection\label{Si13}
Une des motivations pour permettre d'utiliser un sous-groupe libre quelconque de $U_F$
(et pas seulement un sous-groupe de $U_F^+$) \'etait de red\'emontrer les divisibilit\'es
par $2^{[F:\Q]}$ de Deligne-Ribet~\cite{DR} (\`a part dans le cas exceptionnel o\`u il y a
divisibilit\'e par exactement $2^{[F:\Q]-1}$).

Cassou-Nogu\`es~\cite[cor.\,18]{CN1} a obtenu des r\'esultats dans cette direction sous
des conditions du type ``conjecture de Leopolodt''. Le th.\,\ref{Si7}
permet de prouver inconditionnellement 
une divisibilit\'e par $2^{[F:\Q]-1}$ (th.\,\ref{heck12}) et, quand on peut
\'ecrire $\phi(\alpha)$ sous la forme $\phi_1(\alpha)\pm\phi_1(-\alpha)$ (ce qui est souvent le cas,
mais pas toujours), on peut en d\'eduire une divisibilit\'e par $2^{[F:\Q]}$.  Il semble toutefois
difficile de retrouver l'int\'egralit\'e des r\'esultats de~\cite{DR} par ces m\'ethodes.
\end{rema}

%Les r\'esultats de cet article datent en grande partie des ann\'ees 1990.  Une des raisons pour
%lesquelles je ne les ai pas publi\'es plus t\^ot \'etait ma frustration \`a ne pas retrouver
%l'int\'egralit\'e des raffinements $2$-adiques de Deligne et Ribet~\cite{DR}, en particulier le r\'esultat
%de non divisibilit\'e par $2^{[F:\Q]}$ dans le cas sp\'ecial~\cite[??]{DR}
%(ce r\'esultat a \'et\'e r\'einterpr\'et\'e par Gross~\cite{G} comme la non-nullit\'e
%d'un certain r\'egulateur intervenant dans la formulation de sa conjecture sur les valeurs
%sp\'eciales de fonctions $L$ en $s=0$ sous sa forme {\og alg\`ebre de groupe\fg}).

\subsection{C\'eoukonfaikoi}
Cet article comporte trois chapitres.

--- Le premier chapitre est consacr\'e aux d\'ecompositions de Shintani; la version~(\ref{Si1}) fait l'objet
du \S\,\ref{SSS3}, la version~(\ref{Si2}) fait l'objet du \S\,\ref{SSS6},
et la version~(\ref{Si3}) fait l'objet du \S\,\ref{SSS8}.

--- Le second chapitre est consacr\'e \`a la preuve du th.\,\ref{Si6} et \`a des compl\'ements
sur le comportement de $F(\phi,{\cal B},y)$ \`a l'infini
en vue de prouver la convergence de sa transform\'ee de Mellin.

--- Le trois\`eme chapitre est consacr\'e \`a la preuve du th.\,\ref{Si7} et \`a ses applications
aux fonctions~$L$ de Hecke (la formule analytique du nombre de classes en $s=0$
se trouve dans le th.\,\ref{heck4}).

\section{D\'ecompositions de Shintani}\label{SSS1}

\subsection{Notations}\label{SSS2}
Si ${\bf K}={\bf R}$ ou ${\bf C}$, et si $n$ est un entier~$\geq 1$,
on note:

$\bullet$ $e_1,\dots,e_n$ la base canonique de ${\bf K}^n$.

$\bullet$ ${\rm Tr}:{\bf K}^n\to {\bf K}$
et~${\rm N}:{\bf K}^n\to {\bf K}$
les applications
$${\rm Tr}(x)=\sum_{i=1}^nx_i\hskip.2cm{\rm et}\hskip.2cm
{\rm N}(x)=\prod_{i=1}^nx_i,\quad{\text{si $x=x_1e_1+\cdots+x_ne_n$.}}$$

$\bullet$
$\Tp$, $\Tr$, $\Tc$ les groupes $(\R_+^\dual)^n$,
$(\R^\dual)^n$, $(\C^\dual)^n$ et
$\Tpo$, $\Tro$, $\Tco$ leurs intersections
avec $\{z,\ {\rm N}(z)=1\}$.

%On d\'efinit aussi les normes usuelles $\Vert\ \Vert_1$,
%$\Vert\ \Vert_2$, $\Vert\ \Vert_\infty$ sur ${\bf K}^n$
%par
%$$\Vert x \Vert_1=\sum_{i=1}^n|x_i|,\hskip.2cm
%\Vert x \Vert_2=\big(\sum_{i=1}^n|x_i|^2\big)^{1/2},\hskip.2cm
%\Vert x \Vert_\infty=\sup_{1\leq i\leq n}|x_i|.$$

$\bullet$ ${\bf 1}_X$ la fonction caract\'eristique
de $X\subset{\bf K}^n$.

$\bullet$ ${\rm Log}:(\C^\dual )^n\rightarrow \R^n$, l'application
$(z_1,\dots,z_n)\mapsto (\log|z_1|,\dots, \log|z_n|)$.

$\bullet$ $\Vert\ \Vert$ la norme $\Vert x \Vert=\sup_{1\leq i\leq n}|x_i|$.

\vskip2mm
Un {\it r\'eseau $V$ de $\Tc$} est un sous-groupe de $\Tc$
tel que ${\rm Log}$
induise un isomorphisme de $V$ sur un r\'eseau de l'hyperplan $H$
des \'el\'ements de trace $0$
(ceci implique en particulier que $V$ est libre, de rang $n-1$ sur $\Z$, et inclus
dans $\{z,\ |{\rm N}(z)|=1\}$).

\vskip2mm
Enfin, on note:

$\bullet$ 
$\epsilon(\sigma)\in\{\pm1\}$ la signature de $\sigma\in S_{n-1}$
(groupe des permutations de $\{1,\dots,n-1\}$),

$\bullet$ $\epsilon(y)\in\{\pm1\}$ le signe de $y\in\R^\dual$.

$\bullet$ $\epsilon(B)=\epsilon(\det B)$ si $B\in{\rm GL}_n(\R)$, et
$\epsilon(B)=0$ si $B\in{\rm M}_n(\R)$ et $\det B=0$.

\Subsection{Le cas r\'eel totalement positif}\label{SSS3}
Soit $V$ un r\'eseau de $\Tp$.
La m\'ethode de Shintani originelle~\cite{Sh} fournit 
un domaine fondamental de $\Tp$ modulo $V$ constitu\'e d'une r\'eunion
finie de c\^ones simpliciaux ouverts de dimensions variables. Nous donnons ci-dessous
une variante qui fournit un domaine fondamental
constitu\'e d'une r\'eunion
finie de c\^ones simpliciaux, tous de dimension $n$ mais qui ne sont ni ouverts ni ferm\'es
(chacun est obtenu en enlevant certaines faces \`a un c\^one ferm\'e).
L'existence de tels domaines fondamentaux a \'et\'e aussi observ\'ee par
Charollois, Dasgupta, Greenberg~\cite{CD}, 
et par Diaz y Diaz, Friedmann~\cite{DF2}.

\subsubsection{D\'ecompositions de Shintani effectives}\label{eff1}
Si $B=(f_1,\dots,f_n)$ est une base de ${\bf R}^n$ telle
que $e_n$ n'appartient \`a aucun des hyperplans engendr\'es par $n-1$
des vecteurs $f_1,\dots,f_n$,
soit $C_B^0$ le c\^one ouvert engendr\'e par $f_1,\dots,f_n$,
et soit $C_B$ le c\^one dont la fonction caract\'eristique
est donn\'ee par la formule
$${\bf 1}_{C_B}(x)=
\lim_{t\rightarrow 0^+}{\bf 1}_{C_B^0}(x+te_n).$$
Notons ${\bf R}(+1)={\bf R}_{\geq 0}$ et ${\bf R}(-1)={\bf R}_{<0} $.
Alors 
$$C_B={\bf R}(\epsilon_{B,1})f_1+
\cdots+{\bf R}(\epsilon_{B,n})f_n$$
 o\`u $\epsilon_{B,i}=1$ (resp. $\epsilon_{B,i}=-1$)
si  $e_n$ et $f_i$ sont (resp. ne sont pas)
du m\^eme c\^ot\'e de l'hyperplan engendr\'e par
$f_1,\dots,f_{i-1},f_{i+1},\dots,f_n$.

Une {\it d\'ecomposition de Shintani 
effective de $\Tp$ modulo $V$} est un
ensemble fini ${\cal B}$ de bases de ${\bf R}^n$ constitu\'ees 
d'\'el\'ements de $\Tp$ v\'erifiant les conditions suivantes:

(i)  Si $B=(f_1,\dots,f_n)\in{\cal B}$, alors $e_n$ n'appartient \`a
aucun des hyperplans engendr\'es par $n-1$ des vecteurs $f_1,\dots,f_n$,

(ii) Si $x\in \Tp$, il existe un et un seul couple $(B,v)\in{\cal B}\times V$
tel que $vx\in C_B$; autrement dit la r\'eunion disjointe des $C_B$, pour
$B\in{\cal B}$, forme un domaine fondamental de $\Tp$ modulo l'action de $V$,
ce qui peut aussi se r\'e\'ecrire sous la forme
$${\bf 1}_{\Tp}=\sum_{v\in V}\sum_{B\in{\cal B}}{\bf 1}_{v C_B}$$

Plus g\'en\'eralement une {\it d\'ecomposition de Shintani
de $\Tp$ modulo $V$} est un
ensemble fini ${\cal B}$ de couples $(B,n_B)$ o\`u $B$ est une base de ${\bf R}^n$ constitu\'ee
d'\'el\'ement de $\Tp$ v\'erifiant la condition (i) ci-dessus, et $n_B\in\Z$,
tels que
$${\bf 1}_{\Tp}=\sum_{v\in V}\sum_{B\in{\cal B}}n_B{\bf 1}_{v C_B}$$
(Si tous les $n_B$ valent $1$, on retombe sur une d\'ecomposition
effective, et si $n_B\in\{\pm 1\}$ pour tout $B$, on retombe
sur les {\og signed fundamental domains\fg} de~\cite{DF2}.)

L'ensemble des d\'ecompositions de Shintani
de $\Tp$ modulo $V$ est not\'e ${\rm Shin}(\Tp/V)$
et celui des d\'ecompositions effectives est not\'e ${\rm Shin}^{>0}(\Tp/V)$.

\subsubsection{Existence de d\'ecompositions de Shintani}\label{eff2}
On suppose $v=(v_1,\dots,v_n)\mapsto v_n$ injectif sur $V$.
\begin{rema}\phantomsection\label{S3}
Dans les applications que nous avons en vue,
$V$ est un sous-groupe
du groupe des unit\'es de l'anneau des entiers d'un corps totalement r\'eel $F$;
dans ce cas,
l'application $v\mapsto v_n$
est la restriction d'un plongement de $F$ dans ${\bf R}$ et est donc bien
injective.
\end{rema}
\begin{prop}\phantomsection\label{S2}
${\rm Shin}^{>0}(\Tp/V)$ n'est pas vide.
\end{prop}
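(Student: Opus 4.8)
The plan is to start from the cone fundamental domain already recorded above (Shintani's original construction~\cite{Sh} furnishes a finite disjoint union of open simplicial cones of varying dimensions forming a fundamental domain of $\Tp$ modulo $V$) and to upgrade it to an \emph{effective} decomposition, i.e. one with all multiplicities equal to $1$, by means of the half-open cones $C_B$. First I would pass to the top-dimensional pieces: discarding the cones of dimension $<n$ (which are measure zero) and taking the closures $\overline{C_B}$ of the $n$-dimensional ones, I obtain a finite family of closed simplicial cones generated by vectors of $\Tp$ whose $V$-translates cover $\Tp$ and meet pairwise only along their boundaries. Via ${\rm Log}$ the group $V$ acts on the trace-zero hyperplane as a cocompact lattice of translations and $\Tp=\R_{>0}\cdot\Tpo$, so these closed cones are exactly the cones over the closed projective simplices of a triangulation of the compact quotient $\Tpo/V$, which makes the covering and the pairwise boundary-intersection properties transparent.

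Next I would show that replacing each $\overline{C_B}$ by the half-open cone $C_B$, defined by ${\bf 1}_{C_B}(x)=\lim_{t\to0^+}{\bf 1}_{C_B^0}(x+te_n)$, turns the covering into an exact tiling ${\bf 1}_{\Tp}=\sum_{v\in V}\sum_{B}{\bf 1}_{vC_B}$, which is precisely condition (ii) for an effective decomposition. For $x$ in the interior of a translate this is clear; the substance lies on the walls. There I would follow the path $t\mapsto x+te_n$ for $t>0$ small: condition (i) guarantees that this point avoids all the facet hyperplanes and hence lies in the interior of exactly one translate $v\overline{C_B}$, and the definition of $C_B$ was tailored to record exactly that cone. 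Thus every $x\in\Tp$ is counted with total multiplicity one, all $n_B$ equal $1$, and ${\cal B}\in{\rm Shin}^{>0}(\Tp/V)$.

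The delicate point, and where the hypothesis that $v\mapsto v_n$ is injective on $V$ is used, is the global consistency of this wall-assignment under the $V$-action: one must rule out that the $e_n$-direction identifies two distinct $V$-translates of a common facet, which would make a boundary point counted twice or not at all. Injectivity of $v\mapsto v_n$ separates the translates in the $e_n$-direction and is exactly what guarantees this consistency. Two further points need checking, and I expect them to be the main obstacle rather than the covering itself: that the generating rays can be chosen so that (i) holds for every $B$ (this is an open dense condition, so a generic perturbation achieves it without destroying simpliciality or the covering, both being open), and that the closures $\overline{C_B}$ can be arranged to satisfy (i) compatibly with the $e_n$-limit across all $V$-translates, which is again what the injectivity of $v\mapsto v_n$ secures.
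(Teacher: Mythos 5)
Your overall strategy is the same as the paper's (half-open cones $C_B$ obtained by the limit along $e_n$, applied to a simplicial refinement of a cone fundamental domain), but there is a genuine gap at the one point where the whole argument lives or dies: you never actually establish that $e_n$ can be arranged to lie on \emph{none} of the relevant hyperplanes. These hyperplanes are of two kinds: (a) the facet hyperplanes of the simplicial subdivision inside a fundamental domain, and (b) the walls separating distinct $V$-translates of the fundamental domain (together with the hyperplanes carrying the lower-dimensional cones you discard --- for the limit argument to work you need $x+te_n$ to leave that exceptional set for small $t>0$, which fails if one of those hyperplanes contains both $x$ and $e_n$). For (a) a genericity argument is plausible, but your claim that ``the covering is open'' under perturbation of the generating rays is not justified as stated: the cones cannot be perturbed independently, only a $V$-equivariant polyhedral decomposition (equivalently a triangulation of the compact quotient) can be, and you would have to say this. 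For (b) no perturbation of the interior subdivision helps at all, and this is exactly where the injectivity of $v\mapsto v_n$ must enter. Your assertion that injectivity ``separates the translates in the $e_n$-direction and is exactly what guarantees this consistency'' is not a proof: for an arbitrary fundamental domain the walls between ${\cal C}$ and $v{\cal C}$ can be any hyperplanes through the origin, and injectivity of $v\mapsto v_n$ places no constraint on them.

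The paper closes this gap by not taking Shintani's fundamental domain as a black box: it works with the explicit trace-minimal domain ${\cal C}=\{x\in\Tp : {\rm Tr}(x)<{\rm Tr}(vx)\ \forall v\in V\moins\{1\}\}$, shown to be a polyhedral cone cut out by finitely many of these inequalities. Its walls, and the locus not covered by the translates $v{\cal C}$, are contained in hyperplanes of the form ${\rm Tr}(vx)={\rm Tr}(vwx)$; evaluating at $x=e_n$ gives $v_n(1-w_n)=0$, so such a hyperplane contains $e_n$ if and only if $w_n=1$, which injectivity of $v\mapsto v_n$ forbids for $w\neq 1$. Only then is the interior subdivision chosen so that $e_n$ also avoids its facet hyperplanes (and one checks that this property propagates to all translates, since $v^{-1}e_n$ is a positive multiple of $e_n$). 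To repair your proof you would either need to reproduce this explicit description of the walls, or carry out honestly the equivariant perturbation of the whole decomposition (including the walls between translates), which amounts to the same amount of work.
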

\begin{proof}
   \`A quelques d\'etails pr\`es, la d\'emonstration
est la d\'emonstration originelle de Shintani~\cite{Sh}.
Soit ${\cal C}=\{x\in\Tp\ |\ {\rm Tr}(x)<{\rm Tr}(vx)\ \forall v\in V\moins\{1\}\,\}$.

$\bullet$ Il existe $\delta>0$ tel que $x_i\geq \delta\,{\rm Tr}(x)$
pour tous $x=(x_1,\dots,x_n)\in{\cal C}$ et $1\leq i\leq n$.
En effet, l'hypoth\`ese selon laquelle ${\rm Log}^0(V)$ est
un r\'eseau de $H$ implique qu'il existe
$v_i=(v_{i,1},\dots,v_{i,n})\in V$ v\'erifiant $v_{i,j}\leq\frac{1}{2}$ si
$j\neq i$, pour tout $1\leq i\leq n$ (on a alors $v_{i,i}\geq 2^{n-1}$).
L'in\'egalit\'e ${\rm Tr}(v_ix)>{\rm Tr}(x)$, qui se traduit par
 $(v_{i,i}-1)x_i>\sum_{j\neq i}(1-v_{i,j})x_j$, implique
alors $x_i>\frac{{\rm Tr}(x)}{2v_{i,i}-3}$, et on peut donc prendre
$\delta=\inf_i\frac{1}{2v_{i,i}-3}$.

$\bullet$ Si $x_i\geq \delta\,{\rm Tr}(x)$ pour tout
$1\leq i\leq n$, alors ${\rm Tr}(vx)>{\rm Tr}(x)$ si
$v=(v_1,\dots,v_n)$ v\'erifie $\sup v_i\geq\frac{n}{\delta}$.

Il r\'esulte des deux points pr\'ec\'edents que
si $W=\{v\in V\moins\{1\},\ \sup v_i\leq \frac{n}{\delta}\}$ 
(cet ensemble est fini), alors
 ${\cal C}=\{x\in\Tp\ |\ {\rm Tr}(x)<{\rm Tr}(vx)\ \forall v\in W\}$ et donc 
 ${\cal C}$ est un c\^one poly\'edral convexe.  D'autre part, ${\cal C}$
 n'est pas loin d'\^etre un domaine fondamental de $\Tp$ modulo l'action
 de $V$.
Plus pr\'ecis\'ement, la r\'eunion $\cup_{v\in V}v{\cal C}$ est une r\'eunion
disjointe et $\Tp\moins\cup_{v\in V}v{\cal C}$ est inclus dans la r\'eunion
(d\'enombrable) des hyperplans d'\'equations ${\rm Tr}(vx)={\rm Tr}(vwx)$,
pour $v,w\in V$.

Subdivisons ${\cal C}$ en c\^ones simpliciaux ouverts de telle sorte
que $e_n$ ne soit sur aucun des hyperplans
support\'es par les faces des c\^ones de dimension $n$
apparaissant dans la subdivision (cela est possible si et
seulement si $e_n$ n'est
sur aucun des hyperplans support\'es par les faces de ${\cal C}$; or un
tel hyperplan est d'\'equation ${\rm Tr}(vx)={\rm Tr}(x)$ avec $v\in W$ et ne contient
pas $e_n$ car on a suppos\'e que le morphisme "$n$-i\`eme coordonn\'ee"
est injectif).  Si $C$ est un c\^one de dimension $n$ apparaissant
dans la subdivision, choisissons une base $B(C)=(f_{1,C},\dots,f_{n,C})$
telle que $C$ soit le c\^one ouvert engendr\'e par $f_1,\dots,f_n$ 
(autrement dit, $C=C_{B(C)}^0$) et
notons ${\cal B}$ l'ensemble de ces bases.
Pour conclure, il suffit de prouver que
${\cal B}\in{\rm Shin}(\Tp/V)$.

   Tout d'abord, on remarque que la r\'eunion
$\cup_{B\in{\cal B}}\cup_{v\in V} vC_B^0$ est une r\'eunion disjointe.
D'autre part, localement,
$X=\Tp\moins
\cup_{B\in{\cal B}}\cup_{v\in V} vC_B^0$ est inclus dans une r\'eunion
finie d'hyperplans ne contenant pas $e_n$ (car si un hyperplan $H_0$ ne
contient pas $e_n$ et si $v\in\Tp$, alors $vH_0$ ne contient pas $e_n$).
On en tire $\lim_{t\rightarrow0^+}{\bf 1}_X(x+te_n)=0$ pour tout $x\in\Tp$.
D'autre part on a, si $x\in\Tp$,
$${\bf 1}_{\Tp}(x)=\lim_{t\rightarrow0^+}{\bf 1}_{\Tp}(x+te_n)=\lim_{t\rightarrow0^+}
\Bigl({\bf 1}_X(x+te_n)+\sum_{v\in V}\sum_{B\in{\cal B}}{\bf 1}_{vC_B^0}(x+te_n)\Bigr).$$
La somme dans le membre de droite \'etant localement finie, on peut \'echanger
les signes $\sum$ et $\lim_{t\rightarrow0^+}$ et on obtient
$${\bf 1}_{\Tp}(x)=\sum_{v\in V}\sum_{B\in{\cal B}}{\bf 1}_{C_{vB}}(x)
=\sum_{v\in V}\sum_{B\in{\cal B}}{\bf 1}_{vC_B}(x),$$
ce qui permet de conclure.
\end{proof}

\begin{rema}\phantomsection\label{sigma}
On peut construire des d\'ecompositions de Shintani
en partant d'une base de $V$:
soient $\eta_1,\dots,\eta_{n-1}$ une base de $V$ sur $\Z$.
Si $\sigma\in S_{n-1}$, posons $f_{1,\sigma}=1$ et $f_{i,\sigma}=
\prod_{j<i}\eta_j$ si $2\leq i\leq n$. 
Soit $B_\sigma=(f_{1,\sigma},\dots,f_{n,\sigma})$.

(i) Il r\'esulte 
de~\cite[Lemme\,2.2]{Cz2} et des arguments de la preuve ci-dessus pour faire dispara\^{\i}tre
les c\^ones de dimension~$<n$ en les incluant dans des faces bien choisies des
c\^ones de dimension~$n$, que 
$$\{{B_\sigma},\,\sigma\in S_{n-1}\}\in {\rm Shin}^{>0}(\Tp/V),\quad
{\text{si $\epsilon(\sigma)\epsilon(B_\sigma)$ 
ne d\'epend pas de $\sigma$.}}$$

(ii) Dans tous les cas~\cite{DF2},
$$\big\{\big(B_\sigma, \epsilon({\rm Log}(B_\sigma))\epsilon(B_\sigma)\big),\,\sigma\in S_{n-1}\big\}
\in {\rm Shin}(\Tp/V)$$
o\`u
${\rm Log}(B_\sigma)=(1,{\rm Log}\,f_{2,\sigma},\dots,{\rm Log}\,f_{n,\sigma})$
(pour faire le lien avec le (i), remarquons que $\epsilon({\rm Log}(B_\sigma))=\epsilon(\sigma)
\epsilon({\rm Log}(B_{\rm id}))$).
Voir le cor.\,\ref{R3} et la rem.\,\ref{R4} pour une g\'en\'eralisation de cet \'enonc\'e.
\end{rema}

\begin{rema}
Si $\sigma\in S_{n-1}$, notons $X_\sigma$ l'intersection
de $C_{B_\sigma}$ et $\Tpo$.  Alors
$\sum_{\sigma}\epsilon(\sigma)\epsilon(\det B_\sigma) X_\sigma$ est (au signe pr\`es) un domaine
fondamental de $\Tpo$ modulo $V$, ce qui se traduit par l'identit\'e
$$\sum_{\sigma}\epsilon(\sigma)\epsilon(\det B_\sigma) \int_{X_\sigma}d^\dual y=
\tfrac{1}{n}\det(1,{\rm Log}\,\eta_1,\dots,{\rm Log}\,\eta_{n-1})$$
(Si on fait la somme des lignes de la matrice, on obtient $(n,0,\dots,0)$ et on tombe
sur $n$ fois le r\'egulateur sous la forme standard (au signe pr\`es qui compense le signe
devant le domaine fondamental), ce qui explique le $\frac{1}{n}$ dans la formule ci-dessus.)

Les deux membres sont des p\'eriodes et,
si on croit \`a la conjecture de Konsevitch-Zagier, on devrait pouvoir \'etablir l'identit\'e
ci-dessus en n'utilisant que des op\'erations \'el\'ementaires. Pour $n=2$, le r\'esultat est imm\'ediat,
mais d\'ej\`a pour $n=3$, une telle d\'erivation ne semble pas imm\'ediate (le domaine fondamental 
naturel -- i.e.~le
parall\`el\'epip\`ede de sommets $\sum_i \delta_i\cdot {\rm Log}\,\eta_i$ pour $(\delta_i)_i\in\{0,1\}^{n-1}$ --
dans l'espace logarithmique ne se rel\`eve pas en un ensemble semi-alg\'ebrique dans $\Tpo$).
\end{rema}

\Subsection{Convergence de s\'eries}\label{SSS4}
Soit $V$ un r\'eseau de $\Tc$.
\begin{lemm}\phantomsection\label{D19.1}
Si $\delta>0$, alors $\sum_{v\in V}\Vert v\Vert^{-\delta}<\infty$.
\end{lemm}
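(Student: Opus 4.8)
The plan is to transport the series to the logarithmic lattice $L={\rm Log}(V)$ and compare the archimedean size $\log\Vert v\Vert$ with the logarithmic size $\Vert{\rm Log}(v)\Vert$; once that comparison is in hand, convergence reduces to the standard fact that an exponentially decaying sum over a lattice in a finite-dimensional space converges. I may assume $n\geq 2$, since for $n=1$ a r\'eseau of $\Tc$ is $\{1\}$ and the sum is trivially finite.

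First I would record the one piece of genuine content, a geometric inequality. Write $\ell={\rm Log}(v)=(\ell_1,\dots,\ell_n)$ with $\ell_i=\log|v_i|$. Since $V$ is a r\'eseau of $\Tc$, it is contained in $\{z,\ |{\rm N}(z)|=1\}$, so $\sum_i\ell_i=0$ and $\ell\in H$. By definition $\log\Vert v\Vert=\max_i\ell_i$. Because the $\ell_i$ sum to $0$, the positive entries carry all the mass: at most $n-1$ of the $\ell_i$ are positive, and their sum is at least $\max_i|\ell_i|=\Vert{\rm Log}(v)\Vert$ (if the extreme entry is positive this is clear, and if it is negative then the positive entries sum to the same total as the absolute values of the non-positive ones, which dominates $\Vert{\rm Log}(v)\Vert$). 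Hence the largest of at most $n-1$ positive numbers with that total satisfies
$$\log\Vert v\Vert=\max_i\ell_i\geq\tfrac{1}{n-1}\Vert{\rm Log}(v)\Vert,$$
so that for $\delta>0$ one gets $\Vert v\Vert^{-\delta}\leq\exp\big(-\tfrac{\delta}{n-1}\Vert{\rm Log}(v)\Vert\big)$.

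Next I would use that ${\rm Log}$ is an isomorphism of $V$ onto the lattice $L$ of $H\cong\R^{n-1}$. Fixing a $\Z$-basis of $L$ identifies $L$ with $\Z^{n-1}$, and under this identification the restriction of $\Vert\ \Vert$ to $H$ becomes a norm on $\R^{n-1}$; by equivalence of norms in finite dimension it is bounded below by $c\Vert\ \Vert$ on $\Z^{n-1}$ for some $c>0$. Setting $a=\delta c/(n-1)$, and using $\Vert m\Vert\geq\tfrac{1}{n-1}\sum_i|m_i|$, the series is dominated by
$$\sum_{v\in V}\Vert v\Vert^{-\delta}\leq\sum_{m\in\Z^{n-1}}e^{-a\Vert m\Vert}\leq\sum_{m\in\Z^{n-1}}e^{-\frac{a}{n-1}\sum_i|m_i|}=\Big(\sum_{k\in\Z}e^{-\frac{a}{n-1}|k|}\Big)^{n-1}.$$
The right-hand side is a finite product of convergent geometric-type series, hence finite, which proves the lemma.

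The only delicate point is the first inequality linking $\Vert v\Vert$ to $\Vert{\rm Log}(v)\Vert$; everything afterwards is the routine convergence of an exponential sum over a lattice. The main thing to get right there is the trace-zero bookkeeping that forces $\max_i\ell_i$ to control all of $\Vert\ell\Vert$ up to the factor $n-1$, together with keeping track of the (isolated, harmless) element $v=1$ where $\ell=0$.
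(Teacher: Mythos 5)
Votre preuve est correcte et suit essentiellement la même démarche que celle de l'article : minoration de $\log\Vert v\Vert$ par un multiple de $\Vert{\rm Log}(v)\Vert$ grâce à la condition de trace nulle, équivalence des normes sur le réseau logarithmique, puis convergence d'une série géométrique sur $\Z^{n-1}$. Votre justification de l'inégalité géométrique (avec la constante $\tfrac{1}{n-1}$, légèrement meilleure que le $\tfrac{1}{n}$ de l'article) est d'ailleurs plus détaillée que dans le texte, qui l'énonce sans démonstration.
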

\begin{proof}
Deux normes sur un espace vectoriel \'etant
\'equivalentes, 
si $\eta_1,\dots,
\eta_{n-1}$ est une base de $V$, il existe $c>0$ tel que
$\Vert {\rm Log}(\eta_1^{k_1}\dots\eta_{n-1}^{k_{n-1}})\Vert>
c(|k_1|+\cdots+|k_{n-1}|)$.  Par ailleurs, si $v\in V$, alors
$\Vert v\Vert\geq \exp{{1\over n}\Vert{\rm Log}(v)\Vert}$,
et donc $\Vert {\rm Log}(\eta_1^{k_1}\dots\eta_{n-1}^{k_{n-1}})\Vert^{-\delta}\leq
e^{-\delta'(|k_1|+\cdots+|k_{n-1}|)}$, avec $\delta'=\frac{1}{n}c\delta>0$.
Le r\'esultat s'en d\'eduit.
\end{proof}
\begin{lemm}\phantomsection\label{D19}
Soit ${\bf K}=\R,\C$.
  Soient $(g_i)_{i\in{\bf N}}$, une famille d'\'el\'ements
non nuls de ${\bf K}^n$ et $(a_i)_{i\in{\bf N}}$ une famille de r\'eels
positifs telle que $\sum_{i=0}^{+\infty}a_i<+\infty$.  
Alors
L'ensemble
des $y\in{\bf C}^n$ tels que l'on ait $|{\rm Tr}(g_iy)|<a_i\Vert g_i\Vert$
pour une infinit\'e de $i\in{\bf N}$ est de mesure nulle.
\end{lemm}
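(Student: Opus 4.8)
The plan is to read this as an instance of the Borel--Cantelli lemma. For each $i$ set
$E_i=\{y\in\C^n:\ |{\rm Tr}(g_iy)|<a_i\Vert g_i\Vert\}$,
so that the set whose nullity is asserted is $\limsup_i E_i=\bigcap_{N}\bigcup_{i\ge N}E_i$, the set of $y$ belonging to infinitely many $E_i$. Since $\C^n=\bigcup_{R\in\N}B_R$ with $B_R$ the Euclidean ball of radius $R$, and a countable union of null sets is null, it is enough to show that $(\limsup_i E_i)\cap B_R=\limsup_i(E_i\cap B_R)$ is Lebesgue-negligible in $\C^n\cong\R^{2n}$ for each fixed $R$. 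By the first Borel--Cantelli lemma this holds as soon as $\sum_i{\rm vol}(E_i\cap B_R)<+\infty$, where ${\rm vol}$ denotes Lebesgue measure.

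Everything thus reduces to estimating the volume of the slab $E_i\cap B_R$. First I would normalise the linear form: writing $\Vert g\Vert_2=(\sum_j|g_j|^2)^{1/2}$ and $h_i=g_i/\Vert g_i\Vert_2$, the form $m_i(y):={\rm Tr}(h_iy)=\sum_j h_{i,j}y_j$ is $\C$-linear with $\Vert h_i\Vert_2=1$, and since $\Vert g_i\Vert\le\Vert g_i\Vert_2$ the inequality defining $E_i$ forces $|m_i(y)|<a_i$; hence $E_i\cap B_R\subset\{y\in B_R:\ |m_i(y)|<a_i\}$. As $h_i$ is a unit vector for the Hermitian inner product, there is a unitary $U$ with $m_i(Uz)=z_1$, and $U$ preserves both $B_R$ and Lebesgue measure, so we are reduced to the explicit region $\{z\in B_R:\ |z_1|<a_i\}$. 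This is fibred over the disk $\{|z_1|<a_i\}$ of area $\le\pi a_i^2$ with fibres of $(2n-2)$-volume bounded by that of the $(2n-2)$-ball of radius $R$; therefore ${\rm vol}(E_i\cap B_R)\le c_{n,R}\,a_i^2$ for a constant depending only on $n$ and $R$.

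It remains to sum. Because the $a_i$ are nonnegative and $\sum_i a_i<+\infty$, we have $a_i\to 0$, whence $a_i^2\le a_i$ for $i$ large and $\sum_i a_i^2<+\infty$; consequently $\sum_i{\rm vol}(E_i\cap B_R)<+\infty$ and the Borel--Cantelli argument concludes. The one point deserving care is the volume estimate: the gain is that ${\rm Tr}(g_i\,\cdot)$ is a \emph{nonzero complex-linear} functional on $\C^n$ (this holds whether ${\bf K}=\R$ or $\C$, as $g_i\ne 0$), so $E_i$ is a tube of real codimension $2$ and its transverse measure scales like $a_i^2$. In fact any polynomial bound $a_i^p$ with $p\ge1$ would suffice here, since $\sum a_i<\infty$ already gives $\sum a_i^p<\infty$; the real-codimension-$2$ geometry is merely what makes the estimate cleanest. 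The remaining ingredients --- the reduction to balls, the unitary normalisation, and the summability --- are routine.
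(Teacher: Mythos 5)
Votre preuve est correcte et suit essentiellement celle du papier : dans les deux cas on lit l'\'enonc\'e comme une instance du lemme de Borel--Cantelli, en majorant le volume du tube $E_i\cap B(0,R)$ autour de l'hyperplan ${\rm Tr}(g_iy)=0$ puis en sommant. La seule diff\'erence est que le papier traite aussi explicitement le cas o\`u l'on se place dans $\R^n$ (bande de codimension r\'eelle $1$, volume $\lesssim R^{n-1}a_i$, exposant $e=1$), version effectivement utilis\'ee ensuite pour ${\bf K}=\R$ dans le lemme suivant; votre remarque finale selon laquelle toute puissance $a_i^{p}$ avec $p\geq 1$ reste sommable couvre ce cas sans modification.
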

\begin{proof}
   Soit $H_i$ l'hyperplan d'\'equation ${\rm Tr}(g_iy)=0$.
Alors $|{\rm Tr}(g_iy)|<a_i\Vert g_i\Vert$ si et seulement si $d(y,H_i)<a_i$.
L'intersection de l'ensemble $Y_i$ des $y\in{\bf R}^n$ v\'erifiant
$|{\rm Tr}(g_i y)|<a_i\Vert g_i\Vert$ avec la boule $B(0,R)$ de centre 0 et de rayon
$R$ est donc de volume major\'e par $C_e(R^{n-1}a_i)^e$, avec $e=1,2$ si ${\bf K}=\R,\C$,
 pour une certaine
constante $C_e$.  D'autre part, l'intersection de $Y$ avec $B(0,R)$ est incluse,
pour tout $k\in{\bf N}$, dans l'ensemble $\cup_{i\geq k}(Y_i\cap B(0,R))$ dont
le volume peut \^etre rendu arbitrairement petit en prenant $k$ assez grand
car la s\'erie $\sum_{i=0}^{+\infty}a_i$ converge (c'est l'argument du lemme de Borel-Cantelli). 
Ceci permet de conclure.
\end{proof}

\begin{lemm}\phantomsection\label{D20}  
{\rm (i)}
Pour presque tout\footnote{Dans cet article, {\og presque tout\fg} r\'ef\`ere \`a la mesure de Lebesgue,
et ne signifie pas {\og \`a l'exception d'un nombre fini\fg}.}
$y\in {\bf K}^n$, l'ensemble des $v\in V$ tels que
$|{\rm Tr}(vy)|
\leq C\Vert v\Vert^{{1\over 2}}$ est fini.

{\rm (ii)}  Si $H$ est un hyperplan de ${\bf K}^n$, pour presque tout
$y\in H$, l'ensemble des $v\in V$ tels que
$|{\rm Tr}(vy)|
\leq C\Vert v\Vert^{{1\over 2}}$ est fini.
\end{lemm}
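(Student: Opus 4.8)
For~(i) the plan is to feed the inequality directly into the Borel--Cantelli estimate of Lemma~\ref{D19}, the summability being supplied by Lemma~\ref{D19.1}. I would enumerate $V=\{v_i\}_{i\in\N}$, set $g_i=v_i$ and $a_i=2C\Vert v_i\Vert^{-1/2}$. The $g_i$ are nonzero, and $\sum_i a_i=2C\sum_{v\in V}\Vert v\Vert^{-1/2}<\infty$ by Lemma~\ref{D19.1} applied with $\delta=\tfrac12$. Since $|{\rm Tr}(vy)|\le C\Vert v\Vert^{1/2}$ forces $|{\rm Tr}(vy)|<2C\Vert v\Vert^{1/2}=a_i\Vert g_i\Vert$, any $y$ satisfying the bound for infinitely many $v$ lies in the null set furnished by Lemma~\ref{D19}; this is exactly~(i), the factor~$2$ serving only to turn $\le$ into the strict $<$ of Lemma~\ref{D19}.

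For~(ii) the plan is to run the same argument inside $H$, i.e.\ to apply Lemma~\ref{D19} in dimension $n-1$. I would fix a linear isomorphism $\iota:{\bf K}^{n-1}\to H$ and, for each $v\in V$, let $w_v\in{\bf K}^{n-1}$ be the vector representing the restriction to $H$ of the form ${\rm Tr}(v\,\cdot\,)$, so that ${\rm Tr}(v\,\iota(u))={\rm Tr}(w_v u)$ for all $u$. With $y=\iota(u)$ the inequality becomes $|{\rm Tr}(w_v u)|\le C\Vert v\Vert^{1/2}$. Granting a bound $\Vert w_v\Vert\ge c\Vert v\Vert$ with $c>0$, valid for all but finitely many $v$, the $w_v$ are nonzero, the choice $a_v=2Cc^{-1}\Vert v\Vert^{-1/2}$ yields $|{\rm Tr}(w_v u)|<a_v\Vert w_v\Vert$, and $\sum_v a_v<\infty$ again by Lemma~\ref{D19.1}; Lemma~\ref{D19} on ${\bf K}^{n-1}$ then shows that the set of bad $u$, hence of bad $y\in H$, is null for the measure on $H$.

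The crux is therefore the comparison $\Vert w_v\Vert\asymp\Vert v\Vert$, i.e.\ a lower bound on the norm of ${\rm Tr}(v\,\cdot\,)|_H$, and I expect this transversality input to be the only delicate point. The restriction degenerates only when $v$ approaches a normal direction of $H$. Because $V\subset\{|{\rm N}|=1\}$, in any sequence of $V$ tending to infinity a single coordinate dominates, so the only accumulation points of $\{v/\Vert v\Vert:v\in V\}$ are proportional to the coordinate vectors $e_j$. Writing $H=\{{\rm Tr}(g\,\cdot\,)=0\}$, the form ${\rm Tr}(e_j\,\cdot\,)|_H$ is nonzero exactly when $g_j\ne0$. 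Hence, for the hyperplanes relevant in the sequel --- those transverse to every coordinate axis, e.g.\ defined by an element $g\in F^\dual$, all of whose coordinates are nonzero --- the continuous function $w\mapsto\Vert{\rm Tr}(w\,\cdot\,)|_H\Vert$ stays bounded away from $0$ on the compact closure of $\{v/\Vert v\Vert\}$, once one removes the at most one $v$ proportional to $g$ (two proportional elements of $V$ being equal, by injectivity of ${\rm Log}$ on the trace-zero hyperplane); this gives the required uniform bound $\Vert w_v\Vert\ge c\Vert v\Vert$. It is precisely here that the hypothesis on $H$ is used: for a coordinate hyperplane such as $\{y_1=0\}$ the lower bound, and with it the conclusion, can genuinely fail already for $n=2$ and $V$ of rank one.
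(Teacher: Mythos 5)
Your treatment of (i) coincides with the paper's: both feed $a_v=C\Vert v\Vert^{-1/2}$ (up to a harmless constant) into Lemma~\ref{D19}, the summability coming from Lemma~\ref{D19.1} with $\delta=\tfrac{1}{2}$.

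For (ii) your route differs from the paper's at the crucial step, namely the lower bound on the norm of the restriction of ${\rm Tr}(v\,\cdot\,)$ to $H$. The paper splits $V=\cup_{i}V_i$ according to which $|v_i|$ is minimal and, on each piece, writes $H$ as a graph $y_i=\sum_{j\neq i}a_jy_j$; the restricted form is then represented by $w=(v_j+a_jv_i)_{j\neq i}$, and since $|v_i|=\min_j|v_j|\leq 1$ (because $|{\rm N}(v)|=1$) one gets $\Vert w\Vert\geq\frac{1}{2}\Vert v\Vert$ outside a finite set. You instead argue by compactness on the normalized directions $v/\Vert v\Vert$; both proofs then conclude with Lemma~\ref{D19} in dimension $n-1$, so they are close in spirit. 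Two remarks. First, your intermediate assertion that the accumulation points of $\{v/\Vert v\Vert\}$ are proportional to coordinate vectors is false for $n\geq 3$: along a lattice direction whose logarithm has two maximal coordinates, two entries of $v/\Vert v\Vert$ stay bounded away from $0$. What is true, and is all you actually need, is that every accumulation point has at least one vanishing coordinate (again because $\min_i|v_i|\leq 1$ while $\Vert v\Vert\to\infty$), hence is not proportional to a normal vector $g$ all of whose coordinates are nonzero; with this weakening your compactness argument goes through unchanged. Second, your observation that the statement fails for coordinate hyperplanes is correct (for $n=2$, $V$ generated by $(\lambda,\lambda^{-1})$ and $H=\{y_1=0\}$, every $y\in H$ satisfies the bound for all large powers): the paper's own proof tacitly assumes that $H$ can be written as a graph over each set of $n-1$ coordinates, i.e.\ that its normal has no zero coordinate, which is the case for all the hyperplanes ${\rm Tr}(fz)=0$ with $f\in|{\cal B}|$, ${\cal B}\in\Z[{\cal U}]$, to which the lemma is applied (rem.\,\ref{Shi6}, prop.\,\ref{R2}). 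So the restricted version you prove is the one actually used, and your proof of it is correct once the accumulation-point claim is repaired as above.
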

\begin{proof}
Le (i) est une cons\'equence directe des lemmes~\ref{D19} et~\ref{D19.1}.

Pour prouver le (ii), on d\'ecoupe $V$ suivant l'indice $i$ r\'ealisant le minimum
de $|v_i|$; on a alors $V=\cup_{i=1}^nV_i$, et il suffit de prouver
le r\'esultat pour $v\in V_i$ au lieu de $v\in V$.  Par sym\'etrie, on peut supposer $i=n$.
On peut mettre l'\'equation de $H$ sous la forme $y_n=\sum_{i=1}^{n-1}a_iy_i$ et alors
${\rm Tr}(yv)=y_1(v_1+a_1v_n)+\cdots+y_{n-1}(v_{n-1}+a_{n-1}v_n)$.
Soit $H_v$ l'hyperplan de $H$ d'\'equation ${\rm Tr}(yv)=0$, 
et soit $w=(v_1+a_1v_n,\dots,v_{n-1}+a_{n-1}v_n)$.  Alors $|{\rm Tr}(yv)|\leq \Vert v\Vert^{1/2}$
\'equivaut \`a $d(y,H_v)\leq \frac{\Vert v\Vert^{1/2}}{\Vert w\Vert}$.  Or,
comme $v\in V_n$, $|v_n|$ r\'ealise le minimum des $|v_i|$ et, en dehors d'un nombre fini
de $v$, on a $\Vert w\Vert\geq \frac{1}{2}\Vert v\Vert$ (on peut remplacer $\frac{1}{2}$ par
$1-\epsilon$).  On conclut en remarquant que $\sum_v\Vert v\Vert^{-1/2}<\infty$
et en utilisant le lemme~\ref{D19}.
\end{proof}

\begin{lemm}\phantomsection\label{D20.3}
Soit $B=(f_1,\dots,f_n)$ une base de $\C^n$ appartenant \`a ${\cal U}$.

{\rm (i)}  La s\'erie $\sum_{v\in V}\frac{1}{\prod_{i=1}^n\langle f_i,vz\rangle}$
converge pour presque tout $z\in\Tc$.

{\rm (ii)} La s\'erie converge dans l'espace ${\cal S}'(\C^n)$
des distributions temp\'er\'ees sur $\C^n$.
\end{lemm}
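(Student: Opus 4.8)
The plan is to obtain (i) from a Borel--Cantelli argument and then to deduce (ii) from a refined form of the same estimate after a measure-preserving change of variable.

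For (i), I would rewrite each factor as a trace: $\langle f_i,vz\rangle={\rm Tr}\big(v\,(f_iz)\big)$, where $f_iz$ denotes the coordinatewise product. The general position encoded in $B\in{\cal U}$ (in particular, each coordinate of every $f_i$ is non-zero) ensures that $z\mapsto f_iz$ preserves Lebesgue-null sets; hence lemma~\ref{D20}~(i), applied with $y=f_iz$, shows that for almost every $z$ and each fixed $i$ the set of $v\in V$ with $|\langle f_i,vz\rangle|\leq C\Vert v\Vert^{1/2}$ is finite. Intersecting these $n$ cofinite conditions, for almost every $z$ one has $\prod_{i=1}^n|\langle f_i,vz\rangle|>C^n\Vert v\Vert^{n/2}$ for all but finitely many $v$, so the series is dominated up to finitely many terms by $C^{-n}\sum_{v\in V}\Vert v\Vert^{-n/2}$, which converges by lemma~\ref{D19.1}. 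This proves (i).

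For (ii) I would reduce, via the absolute convergence of the scalar series, to an integrability statement. Each $u_v:=\big(\prod_i\langle f_i,vz\rangle\big)^{-1}$ is locally integrable on $\C^n$ (over $\C$ the singularity $1/w$ is integrable) and of moderate growth at infinity, hence defines a tempered distribution. For $\varphi\in{\cal S}(\C^n)$ the change of variable $w=vz$, which is measure preserving because $|{\rm N}(v)|=1$ and which fixes the linear forms $\langle f_i,vz\rangle=\langle f_i,w\rangle=:\ell_i(w)$, gives after summing absolute values and exchanging sum and integral
$$\sum_{v\in V}\int_{\C^n}\frac{|\varphi(z)|}{\prod_i|\langle f_i,vz\rangle|}\,dz=\int_{\C^n}\frac{\Psi(w)}{\prod_i|\ell_i(w)|}\,dw,\qquad \Psi(w):=\sum_{v\in V}|\varphi(vw)|.$$
If the right-hand side is finite and bounded by a Schwartz seminorm of $\varphi$, then $\varphi\mapsto\sum_v\langle u_v,\varphi\rangle$ is a continuous linear form and the series converges in ${\cal S}'(\C^n)$.

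The main work, and the main obstacle, is to control $\Psi$ and to establish the finiteness of this last integral. In the coordinates $r_j=\log|w_j|$ the function $\Psi$ is invariant under translation by the lattice ${\rm Log}(V)$ of the trace-zero hyperplane, so on each level set $\{|{\rm N}(w)|=c\}$ it is determined by its values on a fundamental domain; counting the $v$ with $\Vert vw\Vert\lesssim1$ (which correspond to lattice points in a simplex of size $|\log|{\rm N}(w)||$) and using the rapid decay of $\varphi$ yields a bound of the form $\Psi(w)\lesssim_\varphi\big(1+|\log|{\rm N}(w)||\big)^{n-1}$ together with rapid decay as $\Vert w\Vert\to\infty$. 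I would then check integrability regime by regime: near the hyperplanes $\ell_i=0$ the substitution $u=(\ell_1(w),\dots,\ell_n(w))$ turns the singularity into $\prod_i|u_i|^{-1}$, which is locally integrable over $\C^n$; at infinity the rapid decay of $\Psi$ beats the polynomial growth of $\prod_i|\ell_i|$; and near the origin and the coordinate hyperplanes the merely polylogarithmic growth of $\Psi$ does not break integrability. The delicate point is precisely this estimate on $\Psi$: when ${\rm N}(w)\to0$ infinitely many translates $vw$ contribute terms of size $O(1)$, and one must show that their number grows only polylogarithmically, so as not to overwhelm the mild product singularity.
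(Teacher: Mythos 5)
Your treatment of (i) is exactly the paper's: lemma~\ref{D20}~(i) applied to $y=f_iz$ (licit because $B\in{\cal U}$ forces every coordinate of every $f_i$ to be non-zero, so $z\mapsto f_iz$ preserves null sets) gives $\prod_i|\langle f_i,vz\rangle|\geq C^n\Vert v\Vert^{n/2}$ for all but finitely many $v$, and lemma~\ref{D19.1} concludes. Nothing to add there.

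For (ii) there is a genuine gap. The unfolding $\sum_v\int|\varphi|/\prod_i|\langle f_i,vz\rangle|=\int\Psi(w)/\prod_i|\ell_i(w)|\,dw$, with $\Psi(w)=\sum_v|\varphi(vw)|$, is a legitimate and equivalent reformulation, and the lattice-point count giving $\Psi(w)\lesssim(1+|\log|{\rm N}(w)||)^{n-1}$ is correct. But the second property you attribute to $\Psi$ --- rapid decay as $\Vert w\Vert\to\infty$ --- is false: $\Psi$ is $V$-invariant, every $V$-orbit is unbounded, and on $\{|{\rm N}(w)|=1\}$ one has $\min_{v\in V}\Vert vw\Vert=O(1)$, so $\Psi$ is bounded below on an unbounded region of infinite measure. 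What is true is rapid decay in $|{\rm N}(w)|$ as $|{\rm N}(w)|\to\infty$, and that does not suffice to make ``at infinity the decay of $\Psi$ wins'' work: for $n\geq 3$ the hyperplane $\ell_1(w)=0$ contains points with $\Vert w\Vert$ arbitrarily large and $|{\rm N}(w)|$ bounded (e.g. $w=(t,-t+t^{-2},-t^{-2})$ for $\ell_1=w_1+w_2+w_3$), so near $\{\ell_1=0\}$ at infinity the numerator gives no help and convergence must come from a quantitative interplay between the smallness of $|\ell_1(w)|$, the sizes of the other $|\ell_i(w)|$, and the measure of the region. That interplay is precisely where the paper's proof spends all its effort: it cuts $\C^n$ into regions $U_I$ according to which $|\langle f_i,z\rangle|$ are $\leq\Vert v\Vert^{\epsilon}$, performs on each a change of variables mixing the forms $\langle f_i,z\rangle$ for $i\in I$ with well-chosen coordinates $v_{j(i)}^{-1}z_{j(i)}$ --- the hypothesis $B\in{\cal U}$ being exactly what makes these substitutions invertible --- applies H\"older's inequality when $I=\{1,\dots,n\}$, and finally tunes $\epsilon$ so that each region contributes $O(\Vert v\Vert^{-\delta})$. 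None of that bookkeeping appears in your sketch, and the point you flag as delicate (the polylogarithmic bound on $\Psi$) is the easy half; the hard half is the regime ``close to a singular hyperplane \emph{and} far from the origin'', which your regime-by-regime list treats as two separate, individually harmless cases.
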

\begin{proof}

Le (i) est une cons\'equence imm\'ediate des lemmes~\ref{D20} et~\ref{D19.1}.

Pour prouver le (ii), il suffit de prouver que
la s\'erie $\sum_{v\in V}\int_{\C^n}\frac{\phi(z)}{\prod_{i=1}^n\langle f_i,vz\rangle}$ converge pour tout $\phi$ dans l'espace de Schwartz ${\cal S}(\C^n)$
de $\C^n$.  Soit donc $\phi\in {\cal S}(\C^n)$; il existe $C(\phi)>0$ tel
que $(1+\sum_{i=1}^n|z_i|)^{n+1}|\phi(z)|\leq C(\phi)$, pour tout $z\in\C^n$.
Comme $|{\rm N}(v)|=1$, on a
\begin{align*}
\big|\int_{\C^n}\frac{\phi(z)}{\prod_{i=1}^n\langle f_i,vz\rangle}\big|&=
\big|\int_{\C^n}\frac{\phi(v^{-1}z)}{\prod_{i=1}^n\langle f_i,z\rangle}\big|\\
&\leq \int_{\C^n}\frac{C(\phi)}{(1+\sum_{i=1}^n|v_i^{-1}z_i|)^{n+1}}
\,\frac{1}{\prod_{i=1}^n|\langle f_i,z\rangle|}:=C(\phi)R_I(v)
\end{align*}

On fixe $\epsilon>0$.
Si $I\subset\{1,\dots,n\}$, on note $U_I$ l'ensemble des
$z\in\C^n$ v\'erifiant $|\langle f_i,z\rangle|\leq \Vert v\Vert^\epsilon$
si $i\in I$ et $|\langle f_i,z\rangle|> \Vert v\Vert^\epsilon$
si $i\in I^c:=\{1,\dots,n\}\moins I$.  
On a alors $\int_{\C^n}=\sum_{I}\int_{U_I}$.

Si $I\neq\{1,\dots,n\}$,
on fait, sur $U_I$, le changement de variable $y_i=\langle f_i,z\rangle$,
pour $i\in I$ et $y_i=v_{j(i)}^{-1}z_{j(i)}$ (c'est un changement de
variable, pour tout choix injectif des $j(i)$, gr\^ace \`a l'hypoth\`ese $B\in{\cal U}(\C)$),
 o\`u les $j(i)$ pour $i\in I^c$,
sont choisis de telle sorte que $\prod_{i\in I^c}|v_{j(i)}|$ soit
minimal (ce produit est alors~$\leq \Vert v\Vert^{-\delta_I}$,
avec $\delta_I=\frac{|I|}{n-1}$ si $|I|<n$ -- si $|I|=n$ ce produit vaut $1$).
En majorant $\frac{1}{|\langle f_i,z\rangle|}$ par $\Vert v\Vert^{-\epsilon}$
si $i\in I^c$, et 
$\frac{1}{(1+\sum_{i=1}^n|v_i^{-1}z_i|)^{n+1}}$ par $
\frac{1}{(1+\sum_{i\in I^c}|y_i|)^{n+1}}$,
on obtient la majoration
$$R_I(v)\leq
\Vert v\Vert^{-(n-|I|)\epsilon}
\int_{V_I}
\big(\prod_{i\in I}\frac{1}{|y_i|}\big)
\frac{1}{(1+\sum_{i\in I^c}|y_i|)^{n+1}}\prod_{i\in I^c}|v_{j(i)}|^2,$$
o\`u $V_I=D(0,\Vert v\Vert^\epsilon)^I\times\C^{I^c}$ et $\prod_{i\in I^c}|v_{j(i)}|^2$
est le jacobien du changement de variable.
Comme $\int_{D(0,R)}\frac{1}{|y|}=2\pi R$ (passer en coordonn\'ees polaires),
 on en d\'eduit
l'existence de constantes $C_I$ telles que
$$\big|\int_{U_I}\frac{\phi(z)}{\prod_{i=1}^n\langle f_i,vz\rangle}\big|
\leq C_I \Vert v\Vert^{(2|I|-n)\epsilon} \prod_{i\in I^c}|v_{j(i)}|^2\leq
C_I\Vert v\Vert^{(2|I|-n)\epsilon-2\frac{|I|}{n-1}},
\quad{\text{si $|I|<n$.}}$$

Si $I=\{1,\dots,n\}$, on utilise l'in\'egalit\'e de H\"older
pour obtenir la majoration
$$R_I(v)\leq
\big(\int_{U_I}\frac{1}{\prod_{i=1}^n|\langle f_i,z\rangle|^{3/2}}\big)^{2/3}
\big(\int_{U_I} \frac{1}{(1+\sum_{i=1}^n|v_i^{-1}z_i|)^{3(n+1)}}\big)^{1/3}.$$
Comme $\int_{D(0,R)}\frac{1}{|y|^{3/2}}=\pi R^{1/2}$,
il existe une constante $C_I$ telle que
$$\big(\int_{U_I}\frac{1}{\prod_{i=1}^n|\langle f_i,z\rangle|^{3/2}}\big)^{2/3}
\leq C_I\Vert v\Vert^{\frac{n}{3}\epsilon}$$
Par ailleurs, il existe $A$ tel que $U_I\subset D(0,A \Vert v\Vert^{\epsilon})^n$.
Soit $i$ tel que $|v_i|$ soit minimal (et donc \'egal \`a
$\Vert v^{-1}\Vert^{-1}$ et $\geq \Vert v\Vert^{\frac{1}{n-1}}$).
On peut majorer $\frac{1}{(1+\sum_{i=1}^n|v_i^{-1}z_i|)^{3(n+1)}}$
par $\frac{1}{(1+|v_i^{-1}z_i|)^{3(n+1)}}$ et l'int\'egrale sur
$U_I$ par une int\'egrale sur $\prod_{j\neq i}
D(0,A \Vert v\Vert^{\epsilon})\times\C$.
On en d\'eduit la majoration
$$\int_{U_I} \frac{1}{(1+\sum_{i=1}^n|v_i^{-1}z_i|)^{3(n+1)}}\leq
(\pi A^2 \Vert v\Vert^{2\epsilon})^{n-1}|v_i|^{2}\int_\C
\frac{1}{(1+|z|)^{3(n+1)}}.$$
En regroupant les majorations obtenues, on en tire
l'existence d'une constante $C'_I$ telle que
$$\big|\int_{U_I}\frac{\phi(z)}{\prod_{i=1}^n\langle f_i,vz\rangle}\big|\leq
C'_I\Vert v\Vert^{\frac{n}{3}\epsilon+\frac{2}{3}((n-1)\epsilon-\frac{1}{n-1})}.$$
En prenant $\epsilon>0$ assez petit pour que $(2|I|-n)\epsilon-\frac{2|I|}{n-1}<0$,
pour tout $I\neq\{1,\dots,n\}$, et 
$\frac{n}{3}\epsilon+\frac{2}{3}((n-1)\epsilon-\frac{1}{n-1})<0$,
on en d\'eduit l'existence de $\delta>0$ et
$C'$ tels que
$\big|\int_{U_I}\frac{\phi(z)}{\prod_{i=1}^n\langle f_i,vz\rangle}\big|
\leq C' \Vert v\Vert^{-\delta}$,
pour tout $v\in V$ et tout $I\subset\{1,\dots,n\}$,
ce qui permet de d\'emontrer la convergence de la s\'erie
et termine la d\'emonstration du lemme.
\end{proof}

\Subsection{Le cas complexe}\label{SSS5}
\subsubsection{Le $n$-cocycle $B\mapsto g_B$}\label{SSS6}
Soit $X={\bf P}^{n-1}(\C)$. On note 
$$\partial_m:\Z[X^{m+1}]\to\Z[X^m]$$
 l'application
obtenue par $\Z$-lin\'earit\'e \`a partir
de 
$$(f_0,\dots,f_m)\mapsto\sum_{i=0}^{m}(-1)^i(f_0,\dots,f_{i-1},f_{i+1},\dots,f_m)$$
On a $\partial_{m-1}\circ\partial_{m}=0$, et ${\rm Ker}\,\partial_{m-1}={\rm Im}\,\partial_{m}$

Si $U\subset X^n$, on dit qu'une fonction $\Z$-lin\'eaire $g$ sur $\Z[U]$ est {\it un $n$-cocycle sur $U$}
si $g(\partial{\cal B})=0$ pour tout ${\cal B}\in\Z[X^{n+1}]$ tel que $\partial_n{\cal B}\in \Z[U]$.
Cela \'equivaut \`a ce que $g({\cal B})=0$ pour tout ${\cal B}\in\Z[U]$ v\'erifiant
$\partial_{n-1}{\cal B}=0$.

On note ${\cal U}_0\subset X^n$ l'ensemble
des $B=(f_1,\dots,f_n)$ formant une base de $\C^n$, et
${\cal U}\subset {\cal U}_0$ l'ensemble
des $B=(f_1,\dots,f_n)$ 
telles que toute sous-famille de $n$ vecteurs
de $e_1,\dots,e_n,f_1,\dots,f_n$ soit libre (cette condition
est \'equivalente \`a la non annulation de tous les mineurs
de la matrice d\'efinie par $B$).

Si $B=(f_1,\dots,f_n)\in{\cal U}_0$, on note $B^\vee=(f_1^\vee,\dots,f_n^\vee)\in{\cal U}_0$ sa duale
(on a ${\rm Tr}(f_i^\vee f_j)=0$ si $i\neq j$ et ${\rm Tr}(f_i^\vee f_j)\neq 0$ si $i= j$;
comme les $f_i$ sont des \'el\'ements de $X$, on les rel\`eve dans $\C^n$ pour donner
un sens \`a ${\rm Tr}(f_i^\vee f_j)$).
Par $\Z$-lin\'earit\'e, cela fournit une involution
${\cal B}\mapsto{\cal B}^\vee$ de $\Z[{\cal U}_0]$.

Si $B=(f_1,\dots,f_n)\in{\cal U}_0$, soit $g_B$ la fonction m\'eromorphe sur $\C^n$
d\'efinie par:
$$g_B(z):=\frac{\det B}{{\rm Tr}(f_1z)\cdots{\rm Tr}(f_nz)}$$
(Le r\'esultat ne d\'epend pas du choix de rel\`evements des $f_i$ dans $\C^n$.)
On \'etend $B\mapsto g_B$ par lin\'earit\'e \`a $\Z[{\cal U}_0]$:
$$g_{\cal B}=\sum n_Bg_B,\quad{\text{si ${\cal B}=\sum n_BB$.}}$$
\begin{lemm}\phantomsection\label{Shi1}
$B\mapsto g_B$, $B\mapsto g_{\overline B}$
 et $B\mapsto g_{B^\vee}$ sont des $n$-cocycles sur ${\cal U}_0$.
\end{lemm}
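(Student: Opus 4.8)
The plan is to verify the cocycle property in its first form: for every $\mathcal{B}\in\Z[X^{n+1}]$ with $\partial_n\mathcal{B}\in\Z[\mathcal{U}_0]$, I will show $g(\partial_n\mathcal{B})=0$ for each of $g\in\{B\mapsto g_B,\ B\mapsto g_{\overline B},\ B\mapsto g_{B^\vee}\}$. Everything rests on one elementary identity: for \emph{any} $f_0,\dots,f_n\in\C^n$, writing $B_i=(f_0,\dots,\widehat{f_i},\dots,f_n)$ and $c_i=(-1)^i\det B_i$, one has
\[
\sum_{i=0}^n c_i\,f_i=0 .
\]
This is the expansion along its first row of the $(n+1)\times(n+1)$ determinant obtained by placing a copy of a coordinate row of $(f_0\ \cdots\ f_n)$ on top of that matrix: the two equal rows force the value $0$, and the determinant attached to the $k$-th coordinate row is exactly the $k$-th coordinate of $\sum_i c_i f_i$.

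For $B\mapsto g_B$ I first extend the definition to all of $X^n$ by the same formula $g_B(z)=\det B/\prod_i\mathrm{Tr}(f_iz)$, noting that $g_B=0$ as soon as $\det B=0$; thus the bases lying outside $\mathcal{U}_0$ contribute nothing. For a single augmented simplex $\sigma=(f_0,\dots,f_n)$, multiplying $\sum_i(-1)^i g_{B_i}(z)$ by $\prod_i\mathrm{Tr}(f_iz)$ turns it into $\sum_i c_i\,\mathrm{Tr}(f_iz)=\mathrm{Tr}\big((\sum_i c_if_i)z\big)=0$. Hence $g(\partial_n\sigma)=0$ for every $\sigma$, and by linearity the extended $g$ kills all boundaries, so its restriction to $\Z[\mathcal{U}_0]$ is a cocycle. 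The case $B\mapsto g_{\overline B}$ is identical after replacing each $f_i$ by $\overline{f_i}$: the required relation $\sum_i(-1)^i\det(\overline{B_i})\,\overline{f_i}=0$ is the complex conjugate of the one above.

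The real work is the dual cocycle $B\mapsto g_{B^\vee}$, where the ``extend by zero'' device is unavailable since $B^\vee$ is defined only for bases. Choosing the lift $B^\vee=(B^{-1})^{\mathrm t}$ (the value of $g_{B^\vee}$ is independent of the lift) gives the clean formula $g_{B^\vee}(z)=1/\big(\det B\cdot\mathrm{N}(B^{-1}z)\big)$, so that $G:=g_{B^\vee}(\partial_n\mathcal{B})$ is rational and homogeneous of degree $-n$. Its poles lie only along the hyperplanes $H_\tau=\mathrm{span}(\tau)$ attached to the $(n-1)$-dimensional faces $\tau$ of the bases occurring in $\partial_n\mathcal{B}$, and each is simple. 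The crucial computation is that the residue of $g_{B^\vee}$ along $H_\tau$, where $\tau$ is the face of $B$ obtained by omitting its $j$-th vertex $f_{B,j}$ and the residue is taken with respect to the intrinsic equation $\phi_\tau(z)=\det(z,\tau)$ of $H_\tau$, equals $(-1)^{j-1}/\prod_{k\ne j}t_k$, where the $t_k$ are the coordinates of $z$ in the basis $\tau$ of $H_\tau$. Here the factor $\det B$ is cancelled exactly by $\phi_\tau(f_{B,j})=(-1)^{j-1}\det B$, so this residue depends only on the face $\tau$ (call it $(-1)^{j-1}\rho_\tau$) and not on the basis $B$ completing it.

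Granting this, the residue of $G$ along a hyperplane $H$ is $\sum_{\tau:\,\mathrm{span}(\tau)=H}\big(\text{coeff.\ of }\tau\text{ in }\partial_{n-1}\partial_n\mathcal{B}\big)\,\rho_\tau$, which vanishes term by term since $\partial_{n-1}\partial_n=0$. Therefore $G$ has no poles; being a homogeneous rational function of negative degree without poles, it is the zero polynomial, i.e.\ $g_{B^\vee}(\partial_n\mathcal{B})=0$. I expect the residue computation of the preceding paragraph — proving that the a priori $B$-dependent residue of $g_{B^\vee}$ is in fact intrinsic to the face $\tau$, so that the vanishing $\partial^2=0$ can be brought to bear — to be the main obstacle. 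For $n=2$ it can be checked by hand (three terms, and along each of the three lines $\mathrm{span}(f_k)$ exactly two of the $g_{B_i^\vee}$ contribute residues, which cancel using $c_i=(-1)^i\det B_i$ and $\sum_i c_if_i=0$); this is a useful sanity check before the general bookkeeping.
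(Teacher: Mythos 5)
Your treatment of $B\mapsto g_B$ (and hence of $B\mapsto g_{\overline B}$) is essentially the paper's: the relation $\sum_i(-1)^i\det B_i\,f_i=0$ is exactly the vanishing $(n+1)\times(n+1)$ determinant that the paper expands along its last row, and clearing denominators by $\prod_i{\rm Tr}(f_iz)$ is the same manipulation; your extension-by-zero to degenerate tuples correctly handles the fact that only $\partial_n{\cal B}$, not each individual face, is required to lie in $\Z[{\cal U}_0]$. For $B\mapsto g_{B^\vee}$, however, you take a genuinely different route. The paper computes that the Fourier transform of $g_B$ is $i^ng_{\overline B^\vee}$ (via $\langle\xi,z\rangle=\sum_i\langle\overline{f_i^\vee}\xi,f_iz\rangle$ and the one-variable transform of $1/z$) and transports the cocycle relation by linearity; a variant (rem.\,\ref{Shi2}) deduces it from the combinatorial identity $\partial_{n-1}((\partial_nB)^\vee)=0$. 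You instead argue by residues: writing $g_{B^\vee}(z)=1/(\det B\cdot{\rm N}(B^{-1}z))$, the poles are simple and lie along the hyperplanes spanned by the ordered faces $\tau$ of $B$, and your key computation — that the residue along ${\rm span}(\tau)$, normalized by $\phi_\tau(z)=\det(z,\tau)$, equals $(-1)^{j-1}\rho_\tau$ with $\rho_\tau$ intrinsic to $\tau$ because $\phi_\tau(f_{B,j})=(-1)^{j-1}\det B$ cancels the $\det B$ — is correct, the sign $(-1)^{j-1}$ being precisely the sign of $\tau$ in $\partial_{n-1}B$. Then $\partial_{n-1}\partial_n=0$ kills every residue, and a pole-free rational function homogeneous of degree $-n$ vanishes. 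This is more elementary (no Fourier analysis or tempered distributions) and is the same mechanism the paper exploits later to control apparent singularities (lemme\,\ref{Shi5}, rem.\,\ref{Shi6}); the Fourier-transform route buys a one-line transport of structure plus the explicit formula for $\widehat{g_B}$ that is reused in prop.\,\ref{Shi7} and lemme\,\ref{R1}, so it is not wasted effort in the paper's economy.
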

\begin{proof}
Pour $B\mapsto g_B$, cf.~\cite{Wn}:
la preuve consiste \`a d\'evelopper par rapport \`a la derni\`ere
ligne le d\'eterminant $(n+1)\times(n+1)$
o\`u la $i$-i\`eme colonne, pour $0\leq i\leq n$, est $f_i$ (pour les $n$ premi\`eres lignes)
et ${\rm Tr}(f_iz)$ pour la derni\`ere: ce d\'eterminant est $0$ puisque la derni\`ere
ligne est une combinaison lin\'eaire des $n$ premi\`eres.

Le cas de $B\mapsto g_{\overline B}$ s'en d\'eduit imm\'ediatement.
Enfin, la transform\'ee de Fourier de $\frac{1}{z}$ est $\frac{i}{\xi}$, et comme
$\langle\xi,z\rangle=\sum_{i=1}^n\langle\overline{f_i^\vee}\xi,f_iz\rangle$,
on en d\'eduit que la transform\'ee de Fourier de $g_B$ est
$$i^n|\det B|^{-2}\frac{\det B}{{\rm Tr}(\overline{f_1^\vee}\xi)\cdots{\rm Tr}(\overline{f^\vee_n}\xi)}=
i^n g_{\overline B^\vee}$$  
Il s'ensuit que $B\mapsto g_{\overline B^\vee}$ est un $n$-cocycle,
et comme $B\mapsto g_{\overline B}$ en est un aussi, il en est de m\^eme de
$B\mapsto g_{B^\vee}$.
\end{proof}
\begin{rema}\phantomsection\label{Shi2}
Pour prouver que $B\mapsto g_{B^\vee}$ est un $n$-cocycle,
on peut aussi v\'erifier directement
que, si $B=(f_0,\dots,f_n)$ est tel que
$\partial_n B\in\Z[{\cal U}_0]$, alors 
$\partial_{n-1}((\partial_n B)^\vee)=0$, et donc $(\partial_n B)^\vee=\partial_n B'$,
avec $B'\in\Z[X^{n+1}]$,
ce qui permet de d\'eduire le r\'esultat de celui pour $B\mapsto g_B$.
\end{rema}

Si ${\cal B}=\sum n_B B\in \Z[X^m]$
{\it le support $|{\cal B}|$} de ${\cal B}$ est l'ensemble des $f\in X$ apparaissant
comme \'el\'ement d'une $B$ avec $n_B\neq 0$.
On a $|\partial_{m-1}{\cal B}|\subset|{\cal B}|$ mais cette inclusion peut \^etre stricte
du fait de l'annulation de certains coefficients de la combinaison lin\'eaire.
Si ${\cal B}\in \Z[{\cal U}_0]$, les singularit\'es apparentes de $g_{\cal B}$
sont des p\^oles simples
le long des hyperplans d'\'equation ${\rm Tr}(fz)$, pour
$f\in|{\cal B}|$, certaines de ces singularit\'es peuvent n'\^etre qu'apparentes:
\begin{lemm}\phantomsection\label{Shi5}
Les singularit\'es de $g_{\cal B}$ sont des p\^oles simples
le long des hyperplans d'\'equation ${\rm Tr}(fz)$, pour
$f\in|\partial_{n-1}{\cal B}|$.
\end{lemm}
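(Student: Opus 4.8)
The plan is to localise the problem at each candidate hyperplane and to compute a residue, reducing matters to the cocycle property of the lemme~\ref{Shi1} one dimension lower. Since each $g_B$ is, by construction, the quotient of $\det B$ by a product of \emph{distinct} linear forms ${\rm Tr}(f_iz)$, the a priori singularities of $g_{\cal B}$ are simple poles along the hyperplanes $H_f=\{{\rm Tr}(fz)=0\}$ with $f\in|{\cal B}|$, as observed just before the statement. It therefore suffices to prove that whenever $f\in|{\cal B}|\moins|\partial_{n-1}{\cal B}|$ the residue of $g_{\cal B}$ along $H_f$ vanishes identically, for then the pole along $H_f$ is only apparent.

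First I would compute ${\rm Res}_{H_f}g_{\cal B}$ directly. Only the terms $g_B$ with $f\in B$ contribute; writing $f=f_p$ for such a $B$ (so $p=p_B$ is the position of $f$ in $B$), taking the residue removes the factor ${\rm Tr}(fz)$ and restricts the remaining forms ${\rm Tr}(f_iz)$, $i\neq p$, to $H_f\cong\C^{n-1}$. The arithmetic heart of this step is a Laplace-type identity: bringing $f$ to the front shows that $\det B$ equals $(-1)^{p-1}\lambda_f$ times the $(n-1)$-dimensional determinant of the family $(f_i|_{H_f})_{i\neq p}$, where $\lambda_f$, like the transverse normalisation of the residue, depends only on $f$ and not on the other $f_i$. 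Introducing the face operator $\rho_f$ sending ${\cal B}=\sum n_BB$ to $\sum_{B\ni f}n_B(-1)^{p_B-1}(B\moins f)$, where $B\moins f$ is the tuple with $f$ deleted, this yields
\[
{\rm Res}_{H_f}g_{\cal B}=\kappa(f)\,g_{\rho_f({\cal B})|_{H_f}}
\]
for a nonzero constant $\kappa(f)$ depending only on $f$; the right-hand side is an $(n-1)$-dimensional $g$-function attached to $H_f$, each $(f_i|_{H_f})_{i\neq p}$ being a basis of $H_f^\dual$ because the $f_i$ form a basis of $\C^n$.

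It then remains to show that this residue vanishes when $f\notin|\partial_{n-1}{\cal B}|$. The mechanism is the commutation relation
\[
\partial_{n-2}\circ\rho_f=-\,\rho_f\circ\partial_{n-1},
\]
a sign-chase on simplicial faces (deleting $f$ and then a second entry yields the same $(n-2)$-tuples as deleting that entry and then $f$, with opposite sign). Granting it, if $f\notin|\partial_{n-1}{\cal B}|$ then $\rho_f(\partial_{n-1}{\cal B})=0$, because $\rho_f$ only retains tuples containing $f$; hence $\partial_{n-2}(\rho_f({\cal B}))=0$, and since restriction to $H_f$ commutes with the face maps, $\rho_f({\cal B})|_{H_f}$ is a chain of bases of $H_f$ with vanishing boundary. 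The lemme~\ref{Shi1} applied in dimension $n-1$ (the cocycle property states that $g_{\cal C}=0$ as soon as $\partial{\cal C}=0$) then gives $g_{\rho_f({\cal B})|_{H_f}}=0$, whence ${\rm Res}_{H_f}g_{\cal B}=0$ and $H_f$ carries no genuine pole.

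The main obstacle is the residue identification of the second paragraph: one must check that the full $n\times n$ determinant $\det B$ collapses to the $(n-1)$-dimensional determinant on $H_f$ up to a factor depending on $f$ alone, and that the position sign $(-1)^{p-1}$ generated by the residue is precisely the one built into $\rho_f$, so that the contributions of the various $B\ni f$ recombine into $g_{\rho_f({\cal B})}$ and not some twisted variant. The identity $\partial\rho_f=-\rho_f\partial$ is routine once the signs are pinned down, and, crucially, the argument invokes the lemme~\ref{Shi1} only in dimension $n-1$, so no separate induction on $n$ for the present statement is required.
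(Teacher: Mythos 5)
Your argument is correct, but it is not the route the paper takes. The paper's proof is a one-line perturbation: for $f\in|{\cal B}|\moins|\partial_{n-1}{\cal B}|$ one replaces $f$ by a generic $f'\neq f$ in every $B$ containing $f$, obtaining ${\cal B}'$ with $\partial_{n-1}{\cal B}'=\partial_{n-1}{\cal B}$ (the faces containing $f$ all have coefficient zero, so renaming $f$ into $f'$ changes nothing after cancellation); the cocycle relation of the lemme~\ref{Shi1}, applied to ${\cal B}-{\cal B}'$, then gives $g_{\cal B}=g_{{\cal B}'}$, and the right-hand side visibly has no pole along ${\rm Tr}(fz)=0$. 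You instead compute ${\rm Res}_{H_f}g_{\cal B}$ explicitly, identify it (via the factorisation $\det B=(-1)^{p-1}\lambda_f\det\bigl((f_i|_{H_f})_{i\neq p}\bigr)$ and the face operator $\rho_f$) with $\kappa(f)\,g_{\rho_f({\cal B})|_{H_f}}$, and kill it using $\partial_{n-2}\rho_f=-\rho_f\partial_{n-1}$ together with the cocycle property one dimension down. All the steps you flag do check out: the sign chase in $\partial\rho_f=-\rho_f\partial$ is the standard simplicial one, $\rho_f(\partial_{n-1}{\cal B})=0$ is exactly the hypothesis $f\notin|\partial_{n-1}{\cal B}|$, and the restricted tuples are genuine bases of $H_f^\dual$ since restriction to $H_f$ kills only $\C f$. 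The one point you should make explicit is that the lemme~\ref{Shi1} is stated for $\C^n$ with its standard coordinates, so invoking it \emph{in dimension $n-1$} on $H_f$ requires observing that its proof (Weselmann's determinant expansion) is independent of the dimension and of the choice of volume form, hence applies verbatim after any identification $H_f\cong\C^{n-1}$. Your approach is longer and demands the determinant and sign bookkeeping you yourself identify as the main obstacle, but it buys an explicit formula for the residue of $g_{\cal B}$ along each hyperplane as a lower-dimensional cocycle, which the paper's perturbation argument does not provide; the paper's argument, conversely, is shorter and entirely sign-free.
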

\begin{proof}
Si $f\in|{\cal B}|\moins|\partial_{n-1}{\cal B}|$, on peut fabriquer
${\cal B}'$ avec $\partial_{n-1}{\cal B}=\partial_{n-1}{\cal B}'$ en rempla\c{c}ant $f$ par $f'\neq f$,
suffisamment g\'en\'erique, dans tous les $B$ contenant $f$.  
La relation de $n$-cocycle implique alors que 
$g_{\cal B}=g_{{\cal B}'}$
et $g_{{\cal B}'}$ n'a pas de singularit\'e le long de ${\rm Tr}(fz)=0$.
\end{proof}

\subsubsection{D\'ecomposition de Shintani pour $\Tc$}\label{SSS7}
Soit $V$ un r\'eseau de $\Tc$.
Une {\it d\'ecomposition de Shintani de $\Tc$ modulo $V$}
est un
\'el\'ement ${\cal B}$ de $\Z[{\cal U}]$ tel que l'on ait
\begin{equation}\phantomsection\label{N1}
{1\over{{\rm N}(z)}}=\sum_{v\in V}g_{v{\cal B}}(z)
\end{equation}
d\'es que la s\'erie dans le second membre converge.
On note ${\rm Shin}(\Tc/V)$ l'ensemble des
d\'ecompositions de Shintani de $\Tc$ modulo~$V$.

\begin{rema}\phantomsection\label{Shi6}
(i) La s\'erie converge pour presque tout $z$ d'apr\`es le (i) du lemme~\ref{D20.3}.

(ii) Si ${\cal B}\in {\rm Shin}(\Tc/V)$, les singularit\'es le long
des ${\rm Tr}(f_{i,B}vz)$ sont apparentes; il s'ensuit (utiliser le (ii) du lemme~\ref{D20}) que
$\partial_{n-1}(\sum_{v\in V}v{\cal B})=0$, et si $U\subset \Tc$ est compact,
on peut regrouper les termes par paquets finis
pour supprimer les singularit\'es dans $U$, et alors la s\'erie
converge uniform\'ement sur $U$. La somme est donc holomorphe sur $U$ et comme elle
est \'egale \`a $\frac{1}{{\rm N}(z)}$ presque partout, elle vaut $\frac{1}{{\rm N}(z)}$
partout.

(iii) R\'eciproquement, si $\partial_{n-1}(\sum_{v\in V}v{\cal B})=0$, on peut localement regrouper
les termes de la s\'erie pour la rendre absolument convergente.  
La somme $F(z)=\sum_{v\in V}g_{v{\cal B}}(z)$ est alors
holomorphe sur $\Tc$ et ${\rm N}(z)F(z)$ se factorise \`a travers $\Tc/(\C^\dual\cdot V)$ qui est compact.
Il existe donc $c({\cal B})\in \C$ tel que $F(z)=c({\cal B})\frac{1}{{\rm N}(z)}$.
\end{rema}

\begin{rema}\phantomsection\label{S6}
 (i)  S'il existe $\delta>0$ tel que
$|{\rm Tr}(fvz)|\geq\delta$, pour tous $f\in{\rm Supp}({\cal B})$
et $v\in V$, la s\'erie dans le second membre converge
(en effet, comme $zf_{B,1},\dots,zf_{B,n}$ forment une base
de $\C^n$, il existe $c(z,B)>0$ tel que $\sup_{1\leq i\leq n}
|{\rm Tr}(f_{B,i}vz)|\geq c(z,B)\Vert v\Vert$, pour tout $v\in V$,
 et l'hypoth\`ese
implique que $\big|\frac{1}{\prod_{i=1}^n{\rm Tr}(f_{B,i}vz)}\big|
\leq \delta^{1-n}c(z,B)^{-1}\Vert v\Vert^{-1}$).

(ii)  Si $V\subset\Tp$ et si $\{(B,n_B)\}\in{\rm Shin}(\Tp/V)$,
alors 
$$\sum n_B\epsilon(\det B)B\in{\rm Shin}(\Tc/V)$$
En effet, on
peut \'ecrire, si $z=(z_1,\dots,z_n)$ avec ${\rm Re}\, z_i>0$,
$${1\over{{\rm N}(z)}}=\int_{\Tp} e^{-{\rm Tr}(xz)}\,dx=
\sum_{v\in V}\sum_{B}n_B
\int_{vC_B}e^{-{\rm Tr}(xz)}\,dx=\sum_{v\in V}\sum_{B}{{n_B\,|\det vB|}\over
{\prod_{i=1}^n {\rm Tr}(f_{B,i}vz)}},$$
ce qui nous donne le r\'esultat si ${\rm Re}\, z_i>0$.  Le cas g\'en\'eral
s'obtient par prolongement analytique.

%(iii)  La comparaison avec le cas r\'eel montre que l'on passe de l'un
%\`a l'autre par transform\'ee de Laplace.  L'avantage de la formulation
%complexe est de garder un sens m\^eme quand la notion de c\^one n'a
%plus de sens.  On verra dans le paragraphe suivant qu'elle permet
%en outre, en utilisant une transformation de Fourier, de raffiner
%la notion de d\'ecomposition de Shintani dans le cas r\'eel.

(iii) On a $g_{v{\cal B}}(z)={\rm N}(v)g_{\cal B}(vz)$.  Il s'ensuit que
si $V\subset \Tco$, alors ${\cal B}$ est une d\'ecomposition
de Shintani modulo~$V$ si et seulement si
$\frac{1}{{\rm N}(z)}=\sum_{v\in V}g_{{\cal B}}(vz)$ pour tout
$z\in (\C^\dual)^n$ tel que la s\'erie dans le second membre converge.
C'est sous cette forme que le r\'esultat est utilis\'e dans
les applications aux valeurs sp\'eciales de fonctions~$L$.
\end{rema}
\begin{prop}\phantomsection\label{S7}
Si $V$ est un r\'eseau de $\Tc$, l'ensemble ${\rm Shin}(\Tc/V)$ 
des d\'ecompositions de Shintani de $\Tc$
modulo $V$ n'est pas vide.
\end{prop}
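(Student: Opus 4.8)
The plan is to exhibit a single explicit ${\cal B}\in\Z[{\cal U}]$ satisfying~(\ref{N1}). By Remark~\ref{Shi6}(iii) it is enough to produce ${\cal B}$ for which $\partial_{n-1}\big(\sum_{v\in V}v{\cal B}\big)=0$ and whose normalising constant $c({\cal B})$ equals~$1$. I would build ${\cal B}$ from a $\Z$-basis $\eta_1,\dots,\eta_{n-1}$ of $V$, transposing to the complex setting the flag construction of Remark~\ref{sigma}: for $\sigma\in S_{n-1}$ set $f_{1,\sigma}=(1,\dots,1)$ and $f_{i,\sigma}=\eta_{\sigma(1)}\cdots\eta_{\sigma(i-1)}$ for $2\le i\le n$, let $B_\sigma=(f_{1,\sigma},\dots,f_{n,\sigma})$, and put
$$ {\cal B}=\sum_{\sigma\in S_{n-1}}\epsilon({\rm Log}(B_\sigma))\,B_\sigma, $$
the signs being read from the real logarithmic matrix ${\rm Log}(B_\sigma)=(1,{\rm Log}\,f_{2,\sigma},\dots,{\rm Log}\,f_{n,\sigma})$ exactly as in Remark~\ref{sigma}(ii); the sign of the (now complex) determinant $\det B_\sigma$ is already carried by $g_{B_\sigma}$. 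By Lemma~\ref{D20.3} the series $\sum_{v\in V}g_{v{\cal B}}$ converges for almost every $z$, so~(\ref{N1}) makes sense.

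The identity~(\ref{N1}) I would prove by induction on $n$, the engine being the case $n=2$. There $V=\langle\eta\rangle$ with $\eta=(\eta_1,\eta_2)$, $|\eta_1|\neq|\eta_2|$, and ${\cal B}=(\,(1,1),\eta\,)$; writing $a_k=\eta_1^kz_1+\eta_2^kz_2$ one has the partial-fraction telescoping
$$ g_{\eta^k{\cal B}}(z)=\frac{(\eta_1\eta_2)^k(\eta_2-\eta_1)}{a_k\,a_{k+1}}
=-\frac{1}{z_1z_2}\Big(\frac{\eta_2^kz_2}{a_k}-\frac{\eta_2^{k+1}z_2}{a_{k+1}}\Big), $$
so that $\sum_{k\in\Z}g_{\eta^k{\cal B}}(z)$ collapses to $-\frac{1}{z_1z_2}(0-1)=1/{\rm N}(z)$, the two boundary limits of $\eta_2^kz_2/a_k$ (namely $0$ at $-\infty$ and $1$ at $+\infty$, if $|\eta_1|<|\eta_2|$) being dictated solely by ${\rm Log}(\eta)$. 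For the inductive step I would single out the generator $\eta=\eta_{n-1}$ occurring last in the flags, write each $v\in V$ uniquely as $v'\eta^{k}$ with $v'\in V'=\langle\eta_1,\dots,\eta_{n-2}\rangle$ and $k\in\Z$, and sum over $k$ first: the relation $f_{i+1,\sigma}=\eta_{\sigma(i)}f_{i,\sigma}$ produces the same kind of coordinate-wise telescoping, collapsing the $n$-dimensional sum to the $(n-1)$-dimensional Shintani sum attached to $V'$, which equals $1/{\rm N}$ by the inductive hypothesis.

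Alternatively, and more robustly, I would argue through Remark~\ref{Shi6}. \emph{(a)} The built-in relations $f_{i+1,\sigma}=\eta_{\sigma(i)}f_{i,\sigma}$ force the facets of neighbouring cones to be identified under the $V$-action, so that $\partial_{n-1}\big(\sum_{v\in V}v{\cal B}\big)$ cancels in pairs precisely as in the computation of Remark~\ref{sigma}; this cancellation is purely formal and is insensitive to the arguments of the $\eta_j$, so it holds verbatim in the complex case. By Remark~\ref{Shi6}(iii) the sum is then $c({\cal B})/{\rm N}(z)$ for some constant. \emph{(b)} That constant is the single surviving corner limit of the telescoped series, and this limit is governed only by which monomials in the $\eta_j$ dominate along the rays of $V$, i.e. only by ${\rm Log}(V)$. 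Since $V_+=|V|\subset\Tp$ has the same image ${\rm Log}(V_+)={\rm Log}(V)$, and since Proposition~\ref{S2} together with Remark~\ref{S6}(ii) gives the corresponding real flag decomposition with constant~$1$, I would conclude $c({\cal B})=1$.

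The main obstacle is step~\emph{(b)}: in contrast with the totally positive case there is no honest geometric decomposition into real cones to lean on, so one must master the multivariable telescoping — its convergence (ensured almost everywhere by Lemma~\ref{D20.3} once ${\cal B}\in{\cal U}$) and especially the evaluation of the surviving boundary term. Showing that this term depends only on ${\rm Log}(V)$, thereby reducing the normalisation to the already-treated réseau $V_+=|V|$, is the crux. A secondary, purely technical, point is to guarantee ${\cal B}\in{\cal U}$, i.e. the non-vanishing of all minors of the $B_\sigma$; this I would arrange by a $V$-equivariant generic subdivision of the flag cones as in the proof of Proposition~\ref{S2}, or by deleting apparent singularities via Lemma~\ref{Shi5}.
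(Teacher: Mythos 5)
Your ${\cal B}$ is exactly the one the paper uses: the flag bases $B_\sigma$ with coefficients $\epsilon({\rm Log}\,B_\sigma)$, i.e.~(\ref{S7.3}). The difference lies entirely in how the identity~(\ref{N1}) is established, and that is where your argument does not close. The paper's proof is a two-line reduction: replace each generator $\eta_i$ by $\alpha_i^{-1}\eta_i$ with $\alpha_i^n={\rm N}(\eta_i)$; since $g_B$ depends on the $f_i$ only as points of ${\bf P}^{n-1}(\C)$ and $g_{\lambda vB}=g_{vB}$ for scalar $\lambda$, no term of the series changes, so one may assume $V\subset\Tco$, and then invoke \cite[th.\,1]{Cz1} (see also \cite{Wn}) --- which is precisely the identity you are trying to reprove. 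Your $n=2$ telescoping is correct, but the inductive step fails as described: for $n\geq 3$ the generator $\eta_{n-1}$ is the last factor only of the flags with $\sigma(n-1)=n-1$, so summing over the powers of a single generator does not telescope the full sum over $\sigma$, and handling the cross-terms between different flags is exactly what makes \cite[th.\,1]{Cz1} nontrivial.

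Your alternative route (a)+(b) has the same gap in a different place. Step (a), the cancellation $\partial_{n-1}(\sum_{v\in V}v{\cal B})=0$, is indeed a formal combinatorial fact insensitive to the arguments of the $\eta_j$, and with the rem.\,\ref{Shi6}\,(iii) it yields $\sum_v g_{v{\cal B}}=c({\cal B})/{\rm N}(z)$. But step (b), the claim that $c({\cal B})$ is a \og surviving corner limit\fg{} depending only on ${\rm Log}(V)$ and hence computable on $|V|\subset\Tp$, is the entire analytic content of \cite[th.\,1]{Cz1}: the series is not absolutely convergent, so this \og limit\fg{} is only defined after a regrouping, and nothing you have written shows that the value is unchanged when the phases of the $\eta_j$ are rotated at fixed ${\rm Log}(V)$. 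You correctly identify this as the crux, but identifying a gap is not the same as closing it: as it stands your argument proves the proposition for $n=2$ and reduces the general case to an unproved deformation claim, whereas the paper closes it by the norm-one normalization plus the citation. (Your secondary point about arranging $B_\sigma\in{\cal U}$ is reasonable and is also left implicit in the paper.)
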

\begin{proof}
   Soit $\eta_1,\dots,\eta_{n-1}$ une base de $V$ sur $\Z$.
Si $\sigma\in S_{n-1}$, posons $f_{1,\sigma}=1$ et $f_{i,\sigma}=
\prod_{j<i}\eta_j$ si $2\leq i\leq n$. 
Soit $B_\sigma=(f_{1,\sigma},\dots,f_{n,\sigma})$ comme dans la rem.\,\ref{sigma}..
Nous allons d\'eduire de~\cite{Cz1} (cf.~aussi~\cite{Wn}) que,
si $B_\sigma\in{\cal U}$ pour tout $\sigma\in S_{n-1}$,
alors 
\begin{equation}\phantomsection\label{S7.3}
\sum_{\sigma\in S_{n-1}}\epsilon({\rm Log}\,B_\sigma)\,B_\sigma\in {\rm Shin}(\Tc/V)
\end{equation}
   Soit $\alpha_i$ une racine $n$-i\`eme de ${\rm N}(\eta_i)$.
Soient $\eta'_i=\alpha_i^{-1}\eta_i$ (o\`u $\alpha_i$ est
identifi\'e
avec l'\'el\'ement $(\alpha_i,\dots,\alpha_i)$ de $\Tc$) et
$V'$ le sous-groupe de $\Tc$ engendr\'e par $\eta_1,\dots,\eta_{n-1}$.
On v\'erifie facilement que les termes individuels, et donc a fortiori la somme,
de la s\'erie
$$\sum_{v\in V}\sum_{\sigma\in S_{n-1}}{{\epsilon
(\sigma)\det {vB_\sigma}}\over{\prod_{i=1}^n {\rm Tr}(vf_{i,\sigma}z)}}$$
ne changent pas quand on remplace $\eta_i$ par $\eta'_i$ (dans la d\'efinition
de $f_{i,\sigma}$) et $V$ par $V'$.  On peut donc supposer que $V\subset\Tco$;
mais alors, le r\'esultat est une
cons\'equence directe de~\cite[th.\,1]{Cz1}.
\end{proof}

\begin{prop}\phantomsection\label{Shi7}
Si ${\cal B}\in{\rm Shin}(\Tc/V)$, alors ${\cal B}^\vee\in {\rm Shin}(\Tc/V)$.
\end{prop}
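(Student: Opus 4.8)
Le plan est de transporter l'identit\'e~(\ref{N1}) par transformation de Fourier, en s'appuyant sur l'\'egalit\'e ${\cal F}(g_B)=i^ng_{\overline B^\vee}$ \'etablie dans la preuve du lemme~\ref{Shi1}, puis de corriger le r\'eseau par conjugaison complexe. Commen\c{c}ons par enregistrer trois faits \'el\'ementaires. D'abord, l'involution ${\cal B}\mapsto{\cal B}^\vee$ et la conjugaison pr\'eservent ${\cal U}$~: au niveau matriciel le dual correspond (\`a normalisation pr\`es) \`a $M\mapsto(M^{-1})^{\rm t}$, et les mineurs de $M^{-1}$ \'etant, \`a un facteur $\det M$ pr\`es, les mineurs compl\'ementaires de $M$, leur non-annulation \'equivaut \`a celle des mineurs de $M$~; en particulier ${\cal B}^\vee\in\Z[{\cal U}]$ et l'\'enonc\'e a un sens. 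Ensuite, pour la forme $(x,y)\mapsto{\rm Tr}(xy)$, on a $(vB)^\vee=v^{-1}B^\vee$ pour $v\in\Tc$ (car ${\rm Tr}(v^{-1}f_i^\vee\cdot vf_j)={\rm Tr}(f_i^\vee f_j)$), et la conjugaison complexe commute au dual, $\overline{B^\vee}=\overline B^\vee$. Enfin, $\overline{g_{\cal C}(z)}=g_{\overline{\cal C}}(\overline z)$, puisque $g$ est d\'efinie par des d\'eterminants et des traces.

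La deuxi\`eme \'etape, qui est le c\oe ur de l'argument, est une reformulation distributionnelle~: pour ${\cal B}'\in\Z[{\cal U}]$ et $V'$ un r\'eseau de $\Tc$,
$${\cal B}'\in{\rm Shin}(\Tc/V')\iff \sum_{v\in V'}g_{v{\cal B}'}=\frac{1}{{\rm N}(z)}\ \ {\text{dans ${\cal S}'(\C^n)$}}.$$
En effet, d'apr\`es le (ii) du lemme~\ref{D20.3} la s\'erie converge dans ${\cal S}'(\C^n)$ vers une distribution $T$, et les majorations de la preuve de ce lemme fournissent, pour tout $\phi\in{\cal S}(\C^n)$, la convergence de $\sum_{v}\int_{\C^n}|g_{v{\cal B}'}\phi|$~; le th\'eor\`eme de Fubini donne alors $\langle T,\phi\rangle=\int_{\C^n}h(z)\phi(z)\,dz$, o\`u $h(z)=\sum_{v}g_{v{\cal B}'}(z)$ est la somme ponctuelle, d\'efinie pour presque tout $z$ par le (i) du lemme~\ref{D20.3}. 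Ainsi $T=\frac1{{\rm N}(z)}$ dans ${\cal S}'(\C^n)$ \'equivaut \`a $h=\frac1{{\rm N}(z)}$ presque partout, c'est-\`a-dire \`a la d\'efinition~(\ref{N1}) d'une d\'ecomposition de Shintani.

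Troisi\`eme \'etape~: j'applique ${\cal F}$, continue sur ${\cal S}'(\C^n)$, \`a l'identit\'e $\sum_{v\in V}g_{v{\cal B}}=\frac1{{\rm N}(z)}$. Gr\^ace \`a ${\cal F}(g_B)=i^ng_{\overline B^\vee}$ et aux faits pr\'eliminaires, ${\cal F}(g_{v{\cal B}})=i^ng_{\overline{(v{\cal B})}^\vee}=i^ng_{\overline v^{-1}\overline{\cal B}^\vee}$, tandis que ${\cal F}\big(\frac1{{\rm N}(z)}\big)=\frac{i^n}{{\rm N}(\xi)}$ (transform\'ee de Fourier de $\frac1z$ \'egale \`a $\frac i\xi$). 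Apr\`es simplification par $i^n$ et le changement d'indice $w=\overline v^{-1}$, qui parcourt $\overline V$ quand $v$ parcourt $V$, on obtient $\sum_{w\in\overline V}g_{w\overline{\cal B}^\vee}=\frac1{{\rm N}(\xi)}$ dans ${\cal S}'(\C^n)$, soit, par la reformulation, $\overline{\cal B}^\vee\in{\rm Shin}(\Tc/\overline V)$.

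Quatri\`eme \'etape~: on redescend de $\overline V$ \`a $V$ par conjugaison. Comme $\overline V$ est encore un r\'eseau (${\rm Log}\,\overline V={\rm Log}\,V$), l'appartenance pr\'ec\'edente signifie $\sum_{w\in\overline V}g_{w\overline{\cal B}^\vee}(\xi)=\frac1{{\rm N}(\xi)}$ pour presque tout $\xi$. En conjuguant cette \'egalit\'e et en utilisant $\overline{g_{\cal C}(\xi)}=g_{\overline{\cal C}}(\overline\xi)$, $\overline{{\rm N}(\xi)}={\rm N}(\overline\xi)$ et $\overline{\overline{\cal B}^\vee}={\cal B}^\vee$, il vient $\sum_{w\in\overline V}g_{\overline w\,{\cal B}^\vee}(\overline\xi)=\frac1{{\rm N}(\overline\xi)}$~; en posant $\zeta=\overline\xi$ et $u=\overline w$ (qui parcourt $V$ quand $w$ parcourt $\overline V$), on obtient $\sum_{u\in V}g_{u{\cal B}^\vee}(\zeta)=\frac1{{\rm N}(\zeta)}$ pour presque tout $\zeta$, c'est-\`a-dire ${\cal B}^\vee\in{\rm Shin}(\Tc/V)$, ce qui conclut. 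Le point d\'elicat est la deuxi\`eme \'etape~: transformer l'\'egalit\'e presque partout en une \'egalit\'e dans ${\cal S}'(\C^n)$ (seul cadre o\`u la transformation de Fourier op\`ere terme \`a terme) repose de fa\c{c}on essentielle sur les majorations uniformes en $v$ du (ii) du lemme~\ref{D20.3}~; le suivi des inverses et des conjugu\'es \`a travers ${\cal F}$ et le r\'eindexage $V\rightsquigarrow\overline V\rightsquigarrow V$ sont l'autre source d'erreurs \`a surveiller.
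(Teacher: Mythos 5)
Votre démonstration est correcte et suit pour l'essentiel la même voie que celle de l'article~: transformée de Fourier de l'identité $\sum_{v\in V}g_{v{\cal B}}=\frac{1}{{\rm N}(z)}$ dans ${\cal S}'(\C^n)$ au moyen de ${\cal F}(g_B)=i^ng_{\overline B^\vee}$ (preuve du lemme~\ref{Shi1}) et de la convergence temp\'er\'ee du (ii) du lemme~\ref{D20.3}, puis passage de $\overline{\cal B}^\vee$ \`a ${\cal B}^\vee$ par conjugaison complexe et r\'eindexation $v\mapsto v^{-1}$. Vous ne faites qu'expliciter des d\'etails que le texte laisse implicites (l'\'equivalence entre l'\'egalit\'e presque partout et l'\'egalit\'e distributionnelle, la stabilit\'e de ${\cal U}$ par dualit\'e et conjugaison, le passage par le r\'eseau $\overline V$).
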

\begin{proof}
Il r\'esulte de la preuve du lemme~\ref{D20.3} que la transform\'ee de Fourier
de $\sum_{v\in V} g_{v{\cal B}}$ est $i^n \sum_{v\in V} g_{\overline v\overline{\cal B}^\vee}$,
o\`u $\overline{\cal B}^\vee=\sum_B n_B\overline B^\vee$.
Si ${\cal B}\in{\rm Shin}(\Tc/V)$, cela se traduit par
$\frac{1}{N(y)}=\sum_{v\in V}g_{\overline v^{-1}\overline{\cal B}^\vee}(y)
=\sum_{v\in V}g_{\overline v\overline{\cal B}^\vee}(y)$.
Si on prend le conjugu\'e complexe des deux membres et que l'on \'evalue
en $\overline y$ au lieu de $y$, on obtient
$\frac{1}{N(y)}=\sum_{v\in V}g_{v{\cal B}^\vee}(y)$,
ce qui permet de conclure.
\end{proof}

\begin{rema}
(i)
Si on applique l'op\'erateur $\big(\prod_{i=1}^n\frac{\partial}{\partial z_i}\big)^{k-1}$
aux deux membres de~(\ref{N1}), on obtient une identit\'e du type
$\frac{1}{{\rm N}(z)^k}=\sum_{v\in V}g_{v{\cal B},k}(z)$, o\`u $g_{v{\cal B},k}$ est
une combinaison lin\'eaire de fonctions du type $\frac{1}{\ell_1^{k_1}\cdots \ell_n^{k_n}}$
o\`u les $\ell_i$ sont des formes lin\'eaires et les $k_i$ des entiers~$\geq 1$ v\'erifiant
$\sum_{i=1}^nk_i=nk$.

(ii) Pour \'eliminer les restrictions de~\cite[th.\,5]{Cz1} et du corollaire \`a~\cite[th.\,3]{CS},
il suffirait, si $0\leq j<k$, d'exhiber une ``d\'ecomposition de Shintani'' de la forme suivante:
$$C\,\frac{(\overline{z_1\cdots z_n})^j}{(z_1\cdots z_n)^k}=
\sum_{v\in V}\sum_B\alpha_B\prod_{i=1}^n\frac{\overline{({\rm Tr}(f_{B,i}vz)})^j}{({\rm Tr}(f_{B,i}vz))^k}$$
avec $C\neq 0$.  Si on prend des $B$ de la forme $(1,v_{B,1},\dots, v_{B,n-1})$, avec
$v_{B,i}\in V$, l'existence de $C$ est \'equivalente \`a ce que la singularit\'e le long
de ${\rm Tr}(z)=0$ ne soit qu'apparente (si c'est le cas, toutes les singularit\'es ne sont qu'apparentes
puisque leur ensemble est invariant par $V$ et contenu dans l'ensemble des hyperplans ${\rm Tr}(vz)=0$, $v\in V$).

On peut aussi \'ecrire, formellement, $\frac{\overline{({\rm Tr}(f_{B,i}vz)})^j}{({\rm Tr}(f_{B,i}vz))^k}$
comme l'int\'egrale de $e^{-{\rm Tr}(tz)} P_B(t,\overline z)$ sur un c\^one $C_B$ 
port\'e par les $f_{B,i}$ (cf.~(ii)
de la rem.\,\ref{S6}), o\`u $P_B$ 
est un polyn\^ome bihomog\`ene de degr\'e $nj$ en $\overline z$ et $n(k-1)$ en $t$.
La disparition de la singularit\'e sur ${\rm Tr}(z)=0$ se traduit alors par le
fait que $\partial_n (\sum_v\sum_B P_{vB}\otimes vB)=0$, et il suffit de v\'erifier que le coefficient
de tout simplexe $(1,f_1,\dots,f_{n-2})$ contenant $1$ est nul ce qui donne $c(k,j)$ conditions pour chaque tel
simplexe.

Si on se restreint aux $v_{B,i}$ de la forme $\eta_1^{k_1}\cdots\eta_{n-1}^{k_{n-1}}$, o\`u
$\eta_1,\dots,\eta_{n-1}$ est une base de $V$ et $|k_i|\leq M$, cela fournit $c_0 M^{n-1}$ $B$ possibles,
et seulement $c_1 M^{n-2}$ simplexes $(1,f_1,\dots,f_{n-2})$ contenant $1$.  Si $M$ est assez grand,
$c_1M^{n-2}c(k,j)<c_0M^{n-1}$ et
il y a donc moins de conditions que de variables; on peut donc obtenir une identit\'e de
la forme voulue mais
sans la condition $C\neq 0$ indispensable pour pouvoir l'utiliser pour l'\'etude des fonctions $L$.
Prouver que l'on peut trouver une d\'ecomposition avec $C\neq 0$ revient \`a prouver que
l'\'equation $C=0$ est lin\'eairement ind\'ependante des \'equations d'annulation des
coefficients des simplexes, et toutes mes tentatives en ce sens ont \'echou\'e (mais je
n'ai aucun doute que le r\'esultat est correct).  Bergeron, Charollois, Garc\'{\i}a~\cite{BCG} ont trouv\'e
une voie alternative pour \'eliminer les restrictions de~\cite{Cz1,CS} mentionn\'ees ci-dessus. 
\end{rema}

\Subsection{Le cas r\'eel}\label{SSS8}

\subsubsection{La fonction $\chi_B$}\label{SSS9}
Soit $B=(f_1,\dots,f_n)\in{\cal U}_0(\R)$.
Si $y\in\Tr$ v\'erifie ${\rm Tr}(f_iy)\neq 0$ pour tout $1\leq i\leq n$,
soient 
$$\epsilon(f_i,y)=
\epsilon({\rm Tr}(f_iy)),
\quad f_i(y)=\epsilon(f_i,y)f_i, \quad
\epsilon(B,y)=\epsilon(\det B)\prod_{i=1}^n\epsilon(f_i,y)$$
et soit 
$$\chi_B^0(x,y)=\epsilon(B,y){\bf 1}_{C^0_B(y)}$$
 o\`u $C^0_B(y)$ est le c\^one ouvert engendr\'e par les
$f_i(y)$, pour $1\leq i\leq n$.
%Si ${\rm Tr}(f_iy)= 0$ pour au moins un $i$, on pose
%$\chi_B^0(x,y)=0$.

\begin{lemm}\phantomsection\label{R1}
Soient $B\in{\cal U}$ et
$$g^\vee_{B,y}(x)=\int_{{\bf R}^n}e^{2i\pi {\rm Tr}(tx)}g_B(2i\pi t+y) dt,$$
la transform\'ee de Fourier de $t\mapsto g_B(2i\pi t+y)$.
Alors 
$$g^\vee_{B,y}(x)=\chi_B^0(x,y) e^{-{\rm Tr}(xy)}$$
si ${\rm Tr}(f_{j}^\vee x)\, {\rm Tr}(f_{j} y)\neq 0$, pour tout $1\leq j\leq n$.
%\begin{cases}0 & {\text{si il existe $j$ tel que 
%${\rm Tr}(f_{j}^\vee x)
%{\rm Tr}(f_{j} y)<0$,}}\\
%\epsilon(B,y) e^{-{\rm Tr}(xy)} & 
%{\text{si
%${\rm Tr}(f_{j}^\vee x)
%{\rm Tr}(f_{j} y)>0$ pour tout $1\leq j\leq n$.}}\end{cases}$$
\end{lemm}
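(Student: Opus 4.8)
Le plan est de ramener le calcul de cette transformée de Fourier $n$-dimensionnelle à un produit de transformées en une variable, au moyen du changement de variables linéaire attaché à la base $B$, puis d'évaluer chaque facteur par la méthode des résidus. Je fixerais d'abord des relèvements $f_i\in\R^n$ et prendrais pour $f_i^\vee$ la base duale normalisée par ${\rm Tr}(f_i^\vee f_j)=\delta_{ij}$ (tout ce qui suit est invariant par remise à l'échelle des $f_i$, ce qui est cohérent avec le fait que $g_B$ et $\chi_B^0$ sont bien définis sur $X^n$). Comme $B$ est une base de $\R^n$, l'application $t\mapsto s$, $s_i={\rm Tr}(f_it)$, est un isomorphisme linéaire de jacobien $|\det B|$, donc $dt=|\det B|^{-1}ds$, et ${\rm Tr}(f_i(2i\pi t+y))=2i\pi s_i+\eta_i$ avec $\eta_i:={\rm Tr}(f_iy)$. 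Pour exprimer ${\rm Tr}(tx)$ dans les nouvelles variables, j'utiliserais le développement $t=\sum_i s_if_i^\vee$, d'où ${\rm Tr}(tx)=\sum_i s_i\xi_i$ avec $\xi_i:={\rm Tr}(f_i^\vee x)$. Le facteur $\det B$ du numérateur se combine au jacobien en $\det B/|\det B|=\epsilon(\det B)$, et l'intégrale se factorise :
$$g_{B,y}^\vee(x)=\epsilon(\det B)\prod_{i=1}^n\int_\R\frac{e^{2i\pi s_i\xi_i}}{2i\pi s_i+\eta_i}\,ds_i.$$

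Ensuite, j'évaluerais chaque intégrale en une variable par les résidus. Pour $\xi,\eta\in\R^\dual$, le pôle de l'intégrande est en $s_0=i\eta/(2\pi)$, situé dans le demi-plan supérieur si $\eta>0$ et inférieur si $\eta<0$ ; on ferme le contour dans le demi-plan supérieur lorsque $\xi>0$ et inférieur lorsque $\xi<0$. Le calcul donne $2i\pi s_0\xi=-\eta\xi$, d'où
$$\int_\R\frac{e^{2i\pi s\xi}}{2i\pi s+\eta}\,ds=\epsilon(\eta)\,{\bf 1}_{\xi\eta>0}\,e^{-\xi\eta}.$$
C'est exactement ici que sert l'hypothèse ${\rm Tr}(f_j^\vee x)\,{\rm Tr}(f_jy)=\xi_j\eta_j\neq0$ : elle assure que le pôle n'est pas sur l'axe réel et que le signe de $\xi_j$ est défini.

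Enfin, je recollerais les morceaux. En multipliant, le produit $\prod_i\epsilon(\eta_i)=\prod_i\epsilon(f_i,y)$, combiné à $\epsilon(\det B)$, reconstitue précisément $\epsilon(B,y)$. L'exposant $\sum_i\xi_i\eta_i$ vaut ${\rm Tr}(xy)$ : en effet $x=\sum_i\xi_if_i$, donc ${\rm Tr}(xy)=\sum_i\xi_i\,{\rm Tr}(f_iy)=\sum_i\xi_i\eta_i$. Quant à la condition $\xi_i\eta_i>0$ pour tout $i$, elle équivaut à l'appartenance $x\in C_B^0(y)$ : écrivant $x=\sum_i\xi_if_i=\sum_i\big(\xi_i\epsilon(\eta_i)\big)f_i(y)$ avec $f_i(y)=\epsilon(\eta_i)f_i$, le vecteur $x$ est combinaison à coefficients strictement positifs des générateurs $f_i(y)$ si et seulement si $\xi_i\epsilon(\eta_i)>0$ pour tout $i$, c'est-à-dire $\xi_i\eta_i>0$. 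On obtient ainsi $g_{B,y}^\vee(x)=\epsilon(B,y)\,{\bf 1}_{C_B^0(y)}(x)\,e^{-{\rm Tr}(xy)}=\chi_B^0(x,y)\,e^{-{\rm Tr}(xy)}$, comme voulu.

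La principale difficulté sera analytique, non algébrique : la fonction $t\mapsto g_B(2i\pi t+y)$ ne décroît qu'en $\Vert t\Vert^{-n}$, de sorte que ni l'intégrale de départ ni les intégrales en une variable ne convergent absolument. Il faudra donc interpréter la transformée de Fourier au sens des distributions tempérées (comme dans le lemme~\ref{D20.3}), légitimer dans ce cadre la factorisation de type Fubini après le changement de variables linéaire, et justifier le calcul de résidus en une variable, par exemple comme valeur principale ou via un facteur de convergence $e^{-\epsilon s^2}$ que l'on fait tendre vers $1$. Une fois ces points établis, le reste du raisonnement est purement formel.
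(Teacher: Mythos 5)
Votre démonstration est correcte et suit essentiellement la même voie que celle du texte : changement de variables linéaire fondé sur ${\rm Tr}(tx)=\sum_j{\rm Tr}(f_jt){\rm Tr}(f_j^\vee x)$ pour factoriser l'intégrale en un produit d'intégrales unidimensionnelles, évaluées par la méthode des résidus, puis recollement des signes pour reconstituer $\epsilon(B,y){\bf 1}_{C_B^0(y)}e^{-{\rm Tr}(xy)}$. Votre remarque finale sur la convergence (interprétation distributionnelle, facteur de régularisation) est un soin supplémentaire que le texte passe sous silence, mais ne change pas l'argument.
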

\begin{proof}
   Un changement lin\'eaire de variables
utilisant la formule
$${\rm Tr}(tx)=\sum_{j=1}^n{\rm Tr}(f_{j}t){\rm Tr}(f_{j}^\vee x)$$
permet d'\'ecrire
$$g^\vee_{B,y}(x)=\epsilon(\det B)\prod_{j=1}^n 
g({\rm Tr}(f_{j}^\vee x),{\rm Tr}(f_{j}y)),$$ o\`u l'on a pos\'e
$$g(x,y):=\int_{-\infty}^{+\infty}{{e^{2i\pi tx}}\over{2i\pi t+y}}dt
=\begin{cases} 0 &{\text{si $xy<0$,}}\\
\epsilon(y)e^{-xy}&{\text{si $xy>0$}}
\end{cases}$$
(L'int\'egrale s'\'evalue par la m\'ethode des r\'esidus
et vaut $\epsilon(y)2i\pi {\rm Res}_{t=-y/2i\pi}
{{e^{2i\pi tx}}\over{2i\pi t+y}}$ (resp.~$0$) si $xy>0$ (resp. $xy<0$).)
On en tire le r\'esultat.
\end{proof}

%Soit
%$$\chi^1_B(x,y)=\lim_{t\rightarrow 0^+} \chi_B^0(x,y+te_n)$$
%Et donc $\chi_B^1(x,y)=\epsilon(B,y){\bf 1}_{C^1_B(y)}$ o\`u
%$C^1_B(y)$ est le c\^one ouvert engendr\'e par les
%$\epsilon(f_i,y)f_i$, pour $1\leq i\leq n$.
%En particulier, $\chi^1_B(x,y)=\chi^0_B(x,y)$ si
%${\rm Tr}(f_iy)\neq 0$ pour tout $1\leq i\leq n$.

Maintenant, soit 
$$\chi_B(x,y):=\lim_{t\rightarrow 0^+}\chi_B^0(x+te_n,y)=
\epsilon(B,y){\bf 1}_{C_B(y)}$$
o\`u
$C_B(y)$ est le c\^one entrouvert obtenu \`a partir du c\^one
ouvert $C^0_B(y)$ comme au~\no\ref{eff1}.
\begin{rema}\phantomsection\label{partit}
(o) $\chi_B(\lambda x,\mu y)=\chi_B(x,y)$ si $\lambda,\mu>0$.

(i) Quand $y$ varie, les $C^0_B(y)$ prennent $2^n$ valeurs possibles (i.e.~les
$C^0(\pm f_1,\dots,\pm f_n)$ pour tous les choix de signes), qui forment une partition
de $\R^n$ aux hyperplans d'\'equation ${\rm Tr}(f_i^\vee x)=0$ pr\`es.
Il s'ensuit que les $C_B(y)$ prennent $2^n$ valeurs possibles, qui forment une partition
de $\R^n$. En effet, si $Y\subset\Tr$ est un ensemble de cardinal $2^n$, tel que
les $C^0_B(y)$, pour $y\in Y$, prennent toutes les valeurs possibles, alors
$\sum_{y\in Y}{\bf 1}_{C^0_B(y)}=1$ en dehors des hyperplans d'\'equation ${\rm Tr}(f_i^\vee x)=0$,
et donc, pour tout $x$, 
$$\sum_{y\in Y}{\bf 1}_{C_B(y)}(x)=\lim_{t\to 0^+}\sum_{y\in Y}{\bf 1}_{C^0_B(y)}(x+te_n)=1$$

(ii) $g_B$ ne change pas si on remplace $f_i$ par $-f_i$; il en est donc de m\^eme de $\chi_B$.
On peut donc ajuster les signes des $f_i$ de telle sorte que $e_n\in C(f_1,\dots,f_n)$.
On a alors
$$C_B(y)=\R(\epsilon(f_1,y))f_1+\cdots+\R(\epsilon(f_n,y))f_n$$
En particulier, $C_B(y)$ est ferm\'e si et seulement si ${\rm Tr}(f_iy)>0$ pour tout $i$.
Notons que {\og $C_B(y)$ ferm\'e\fg} \'equivaut \`a {\og $0\in C_B(y)$\fg}.
\end{rema}

\begin{lemm}\phantomsection\label{chi0}
Si $e_n\in C(f_1,\dots,f_n)$, alors $\chi_B(0,y)=\epsilon(\det B){\bf 1}_{C(f_1^\vee,\dots,f_n^\vee)}$
\end{lemm}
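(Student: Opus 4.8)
The plan is to unfold the definition $\chi_B(0,y)=\epsilon(B,y){\bf 1}_{C_B(y)}(0)$ and to decide, for each $y$ with ${\rm Tr}(f_iy)\neq 0$ for all $i$ (the domain on which $\chi_B(\cdot,y)$ is defined), whether $0$ lies in the half-open cone $C_B(y)$. First I would invoke the formula recorded in rem.\,\ref{partit}\,(ii): under the hypothesis $e_n\in C(f_1,\dots,f_n)$ one has
$$C_B(y)=\R(\epsilon(f_1,y))f_1+\cdots+\R(\epsilon(f_n,y))f_n.$$
Since $f_1,\dots,f_n$ are linearly independent, an equality $0=\sum_i c_if_i$ with $c_i\in\R(\epsilon(f_i,y))$ forces $c_i=0$ for every $i$; and $0$ belongs to $\R(+1)={\bf R}_{\geq 0}$ but not to $\R(-1)={\bf R}_{<0}$. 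Hence $0\in C_B(y)$ if and only if $\epsilon(f_i,y)=+1$, i.e.\ ${\rm Tr}(f_iy)>0$, for every $i$ --- this is exactly the equivalence ``$C_B(y)$ fermé $\iff 0\in C_B(y)\iff {\rm Tr}(f_iy)>0\ \forall i$'' noted in rem.\,\ref{partit}\,(ii).

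Next I would compute $\epsilon(B,y)$ on the region where $0\in C_B(y)$. There every $\epsilon(f_i,y)=+1$, so $\epsilon(B,y)=\epsilon(\det B)\prod_i\epsilon(f_i,y)=\epsilon(\det B)$. Combining with the previous step, $\chi_B(0,y)$ equals $\epsilon(\det B)$ when ${\rm Tr}(f_iy)>0$ for all $i$, and vanishes as soon as some ${\rm Tr}(f_iy)<0$ (since then $0\notin C_B(y)$). Thus $\chi_B(0,y)=\epsilon(\det B)$ times the indicator of $\{y:{\rm Tr}(f_iy)>0\ \forall i\}$.

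It then remains to identify this region with the cone $C(f_1^\vee,\dots,f_n^\vee)$. Writing $y=\sum_i c_if_i^\vee$ in the dual basis and using ${\rm Tr}(f_jf_i^\vee)=\delta_{ij}$, one gets ${\rm Tr}(f_jy)=c_j$, so the coordinate of $y$ along $f_j^\vee$ is precisely ${\rm Tr}(f_jy)$; therefore $y$ lies in the closed cone $C(f_1^\vee,\dots,f_n^\vee)$ exactly when all ${\rm Tr}(f_jy)\geq 0$. On the domain of $\chi_B$ (where all ${\rm Tr}(f_iy)\neq 0$) this is the condition ${\rm Tr}(f_iy)>0\ \forall i$, whence $\chi_B(0,y)=\epsilon(\det B){\bf 1}_{C(f_1^\vee,\dots,f_n^\vee)}(y)$, as claimed. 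I expect no serious obstacle: the computation is a direct bookkeeping of signs. The only point requiring genuine care is the normalization of the dual vectors $f_i^\vee$ --- one must take the lift with ${\rm Tr}(f_i^\vee f_i)>0$ (the genuine dual basis ${\rm Tr}(f_i^\vee f_j)=\delta_{ij}$), since with the opposite sign the cone $C(f_1^\vee,\dots,f_n^\vee)$ would be reflected across the corresponding coordinate hyperplane and the identity would break.
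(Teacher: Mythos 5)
Your proof is correct and follows essentially the same route as the paper's: Colmez's proof consists precisely of the chain of equivalences $\chi_B(0,y)\neq 0\iff 0\in C_B(y)\iff {\rm Tr}(f_iy)>0\ \forall i\iff y\in C(f_1^\vee,\dots,f_n^\vee)$, concluded by the observation that on that region $\epsilon(B,y)=\epsilon(\det B)$; you simply spell out the justification of each equivalence (via rem.\,\ref{partit}\,(ii) and the dual-basis expansion), and your closing caveat about normalizing $f_i^\vee$ so that ${\rm Tr}(f_i^\vee f_i)>0$ is a legitimate point left implicit in the paper.
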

\begin{proof}
Si $e_n\in C(f_1,\dots,f_n)$,
les propri\'et\'es suivantes sont \'equivalentes:

\quad $\bullet$ 
$\chi_B(0,y)\neq 0$,

\quad $\bullet$ $0\in C_B(y)$,

\quad $\bullet$ ${\rm Tr}(f_iy)>0$ pour $1\leq i\leq n$,

\quad $\bullet$ $y\in C(f_1^\vee,\dots,f_n^\vee)$.

On en d\'eduit le r\'esultat car 
{\og $\chi_B(0,y)\neq 0$ \fg} implique {\og $\chi_B(0,y)=\epsilon(B,y)$\fg} et
{\og ${\rm Tr}(f_iy)>0$ pour $1\leq i\leq n$\fg} implique
{\og $\epsilon(B,y)=\epsilon(\det B)$\fg}.
\end{proof}

\begin{lemm}\phantomsection\label{signe}
Si $v\in\Tr$ v\'erifie $|{\rm N}(v)|=1$ et $v_n>0$, alors 
$$\chi_{vB}(x,y)={\rm N}(v)\chi_B(v^{-1}x,vy)$$
\end{lemm}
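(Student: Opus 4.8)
The plan is to prove the identity first for the open-cone version $\chi^0_B$ and then to pass to the one-sided limit that defines $\chi_B$, the step where the hypothesis $v_n>0$ becomes essential. Throughout I use that multiplication by $v=(v_1,\dots,v_n)$ is the linear automorphism of $\R^n$ with diagonal matrix ${\rm diag}(v_1,\dots,v_n)$, so in particular it is invertible and carries cones to cones.

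For the open-cone identity I would track the three ingredients of $\chi^0_{vB}(x,y)=\epsilon(vB,y)\,{\bf 1}_{C^0_{vB}(y)}$ separately. First, $\det(vB)={\rm N}(v)\det B$, so $\epsilon(\det(vB))=\epsilon({\rm N}(v))\,\epsilon(\det B)$. Second, the elementary identity $${\rm Tr}((vf_i)y)=\sum_k v_k(f_i)_k y_k={\rm Tr}(f_i(vy))$$ gives $\epsilon(vf_i,y)=\epsilon(f_i,vy)$, whence $\epsilon(vB,y)=\epsilon({\rm N}(v))\,\epsilon(B,vy)$. Third, the $i$-th generator of $C^0_{vB}(y)$ is $$(vf_i)(y)=\epsilon(vf_i,y)\,(vf_i)=v\bigl(\epsilon(f_i,vy)f_i\bigr)=v\,f_i(vy),$$ so $C^0_{vB}(y)$, being the open cone on the vectors $v\,f_i(vy)$, is the image $v\,C^0_B(vy)$ of $C^0_B(vy)$ under the automorphism $v$; hence ${\bf 1}_{C^0_{vB}(y)}(x)={\bf 1}_{C^0_B(vy)}(v^{-1}x)$. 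Since $|{\rm N}(v)|=1$ lets me replace the scalar $\epsilon({\rm N}(v))$ by ${\rm N}(v)\in\{\pm1\}$, assembling the three computations gives $$\chi^0_{vB}(x,y)={\rm N}(v)\,\chi^0_B(v^{-1}x,vy).$$

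For the second step I would apply $\lim_{t\to 0^+}(\,\cdot\,)(x+te_n,y)$ to both sides. The point is that $e_n$ has only its $n$-th coordinate nonzero, so $v^{-1}e_n=v_n^{-1}e_n$ and therefore $v^{-1}(x+te_n)=v^{-1}x+(t/v_n)e_n$. This is exactly where $v_n>0$ is used: as $t\to 0^+$ the parameter $t/v_n$ also tends to $0^+$, so the regularizing direction $e_n$ is preserved; with $v_n<0$ one would approach along $-e_n$ and recover the wrong half-open cone. Setting $s=t/v_n\to 0^+$ then gives $$\chi_{vB}(x,y)={\rm N}(v)\lim_{s\to 0^+}\chi^0_B(v^{-1}x+se_n,vy)={\rm N}(v)\,\chi_B(v^{-1}x,vy),$$ which is the assertion.

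The determinant, trace, and cone-transport computations are routine; the only genuinely delicate point, and the one I expect to require care, is the behaviour of the one-sided $e_n$-limit defining $\chi_B$, for which the hypothesis $v_n>0$ is precisely what keeps the limit on the correct side. I would also remark that the $\chi^0$-identity holds only off the hyperplanes ${\rm Tr}(f_i^\vee x)=0$ and ${\rm Tr}(f_iy)=0$, but this causes no trouble, because passing to $\chi_B$ through the $e_n$-limit resolves exactly the ambiguity carried by those hyperplanes, as in \no\ref{eff1}.
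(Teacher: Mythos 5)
Your proof is correct and follows the same route as the paper: establish the identity for $\chi^0_B$ via $(vf_i)(y)=v\,f_i(vy)$ and $\det(vB)={\rm N}(v)\det B$, then pass to the one-sided limit using $v^{-1}e_n=v_n^{-1}e_n$ and $v_n>0$ to keep the limit on the correct side. Your write-up is simply more detailed about the sign bookkeeping ($\epsilon(vB,y)=\epsilon({\rm N}(v))\epsilon(B,vy)$), which the paper leaves as ``immédiat''.
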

\begin{proof}
Le r\'esultat est imm\'ediat pour $\chi^0$ car $(vf_i)(y)=v\,f_i(y)$
(le ${\rm N}(v)$ vient de ce que $\det vB={\rm N}(v)\det B$). 
Le r\'esultat pour $\chi$ s'en d\'eduit car
\begin{align*}
\lim_{t\to 0^+}\chi_B^0(v^{-1}(x+te_n),vy)&=
\lim_{t\to 0^+}\chi_B^0(v^{-1}x+tv_n^{-1}e_n,vy)\\
&= \lim_{t\to 0^+}\chi_B^0(v^{-1}x+te_n,vy)=\chi_B(v^{-1}x,vy)
\end{align*}
puisque $v_n>0$, par hypoth\`ese.
\end{proof}

\begin{lemm}\phantomsection\label{chi1}
$B\mapsto \chi_B$ est un $n$-cocycle sur ${\cal U}(\R)$.
\end{lemm}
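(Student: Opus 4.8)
The relation to be established is that $\chi_{\cal B}:=\sum_B n_B\chi_B=0$ for every ${\cal B}=\sum_B n_BB\in\Z[{\cal U}(\R)]$ with $\partial_{n-1}{\cal B}=0$. The plan is to transport the cocycle relation already known for $B\mapsto g_B$ (lemme~\ref{Shi1}) through the Fourier transform of lemme~\ref{R1}, and then to pass from the open cones $\chi_B^0$ to the half-open cones $\chi_B$ via the limiting procedure defining the latter.

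First I would fix $y\in\Tr$ avoiding the finitely many hyperplanes ${\rm Tr}(fy)=0$, $f\in|{\cal B}|$; these are exactly the $y$ for which every $\chi_B(\cdot,y)$ occurring in ${\cal B}$ is defined. Since $B\mapsto g_B$ is an $n$-cocycle on ${\cal U}_0\supset{\cal U}(\R)$ and $\partial_{n-1}{\cal B}=0$, lemme~\ref{Shi1} gives $g_{\cal B}=\sum_B n_Bg_B=0$ identically. For real $t$ the point $2i\pi t+y$ then meets no pole ${\rm Tr}(f_{B,i}z)=0$ (that would force the real number ${\rm Tr}(f_{B,i}y)$ to vanish), so $t\mapsto\sum_B n_Bg_B(2i\pi t+y)$ is the zero function, and so is its Fourier transform. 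By linearity this transform is $\sum_B n_Bg^\vee_{B,y}$, whence, using lemme~\ref{R1} term by term, for every $x$ with ${\rm Tr}(f_{B,i}^\vee x)\neq0$ for all $B$ and $i$,
$$0=\sum_B n_Bg^\vee_{B,y}(x)=\Big(\sum_B n_B\chi_B^0(x,y)\Big)e^{-{\rm Tr}(xy)}.$$
Dividing by $e^{-{\rm Tr}(xy)}\neq0$ gives $\sum_B n_B\chi_B^0(x,y)=0$ off a finite union of hyperplanes.

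It then remains to replace $\chi_B^0$ by $\chi_B(x,y)=\lim_{t\to0^+}\chi_B^0(x+te_n,y)$, and this is where I expect the only real subtlety to lie. Here the hypothesis $B\in{\cal U}$ is essential: it forces ${\rm Tr}(f_{B,i}^\vee e_n)\neq0$, so that each equation ${\rm Tr}(f_{B,i}^\vee(x+te_n))=0$ has a single value of $t$ as solution. Hence, for any fixed $x$ and all small $t>0$ outside a finite set, $x+te_n$ avoids the exceptional hyperplanes, so $\sum_B n_B\chi_B^0(x+te_n,y)=0$ there; letting $t\to0^+$ and using that each summand tends to $\chi_B(x,y)$, I obtain $\sum_B n_B\chi_B(x,y)=0$, the desired cocycle relation. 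The crux of the argument is thus this transversality of the exceptional hyperplanes to $e_n$, which is precisely why one works over ${\cal U}$ rather than ${\cal U}_0$; the only other point requiring care is the generic choice of $y$ in the second paragraph, which removes all real poles and makes the Fourier-transform step an honest pointwise identity.
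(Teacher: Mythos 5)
Votre démonstration est correcte et suit exactement la voie que l'article se contente d'esquisser en une ligne (cocycle pour $g_B$, transformée de Fourier via le lemme~\ref{R1}, puis passage de $\chi_B^0$ à $\chi_B$); vous explicitez en outre le point que l'article laisse implicite, à savoir que l'hypothèse $B\in{\cal U}$ garantit ${\rm Tr}(f_{B,i}^\vee e_n)\neq 0$ et donc la compatibilité de la relation avec la limite $t\to 0^+$ le long de $e_n$. Rien à redire.
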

\begin{proof}
$B\mapsto g_B$ est un $n$-cocycle, et donc $B\mapsto g_B^\vee$ aussi, et
le r\'esultat est une cons\'equence du lien entre $\chi_B$ et $g^\vee_B$.
\end{proof}

\Subsubsection{Support de $\chi_B$ et de ses translat\'es}\label{SSS10}
\begin{lemm}\phantomsection\label{R5}  
Soient $B$ une base de $\R^n$ et $x,y\in({\bf R}^\dual )^n$.
Si $B,B^\vee \in{\cal U}$, 
il existe $c(B)>0$, 
telle que si $v\in({\bf R}^\dual )^n$ v\'erifie $|{\rm N}(v)|=1$ et  
$\chi_{vB}(x,y)\neq 0$, alors
$$\Vert v\Vert<c(B)
\sup_{1\leq i\leq n}\frac{\Vert y\Vert^2}{|y_i|^2},
\quad{\rm ou}\quad
\Vert v^{-1}\Vert<c(B)
\sup_{1\leq i\leq n}\frac{\Vert x\Vert^2}{|x_i|^2}$$
\end{lemm}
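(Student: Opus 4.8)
The plan is to analyze when $\chi_{vB}(x,y)\neq 0$ directly in terms of the cone structure, using the explicit description of $C_B(y)$ from~\no\ref{SSS9}. Recall that $\chi_{vB}(x,y)=\epsilon(vB,y){\bf 1}_{C_{vB}(y)}(x)$, so non-vanishing means precisely that $x$ lies in the half-open cone $C_{vB}(y)$ generated by the vectors $\pm vf_i$, where the sign of the $i$-th generator is $\epsilon(f_i(vB),y)=\epsilon({\rm Tr}(vf_i\, y))$. By the reformulation in~(ii) of rem.\,\ref{partit}, membership $x\in C_{vB}(y)$ is equivalent to a system of sign conditions on the dual coordinates: writing $x=\sum_i \lambda_i\,(vf_i)$, non-vanishing forces each $\lambda_i$ to have the sign prescribed by $\epsilon({\rm Tr}(vf_iy))$, and these coordinates are computed via the dual basis $(vB)^\vee=vB^\vee$ (using ${\rm N}(v)=\pm1$), namely $\lambda_i\,{\rm Tr}(v^{-1}f_i^\vee\,vf_i)={\rm Tr}(v^{-1}f_i^\vee\,x)$. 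The hypothesis $B,B^\vee\in{\cal U}$ guarantees all the relevant pairings are nonzero, so these sign conditions are genuinely constraining.

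\textbf{First I would} extract from $\chi_{vB}(x,y)\neq 0$ the two families of inequalities it encodes. On one side, the sign of the $x$-coordinate along $vf_i$ is fixed, which translates into $\epsilon\big({\rm Tr}(v^{-1}f_i^\vee\,x)\big)=\epsilon\big({\rm Tr}(vf_i\,y)\big)$ for each $i$ (up to the fixed sign $\epsilon(\det B)$); on the other side, the condition $x\in C^0_{vB}(y)$ means $x$ actually lies in the open cone, so all these products ${\rm Tr}(v^{-1}f_i^\vee x)\cdot{\rm Tr}(vf_iy)$ are positive. The key quantitative input is then that for a fixed base $B\in{\cal U}$, both $(f_i)$ and $(f_i^\vee)$ are fixed bases of $\R^n$, so there are constants comparing $\max_i|{\rm Tr}(vf_iy)|$ to $\Vert v\Vert\,\Vert y\Vert$ and $\max_i|{\rm Tr}(v^{-1}f_i^\vee x)|$ to $\Vert v^{-1}\Vert\,\Vert x\Vert$. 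More useful is a lower bound: because $\{vf_i\}$ is a basis, some ${\rm Tr}(vf_iy)$ is bounded below in absolute value by $c\,\Vert v\Vert\,|y_j|$ for a suitable coordinate $j$, and symmetrically for the dual side with $v^{-1}$ and $x$.

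\textbf{The heart of the argument} is the following dichotomy. The sign-matching condition ties together the behavior of $v$ acting on $y$ (through $f_i$) and of $v^{-1}$ acting on $x$ (through $f_i^\vee$). If $\Vert v\Vert$ were large while $\Vert v^{-1}\Vert$ stayed controlled, then the constraint that all ${\rm Tr}(vf_iy)$ be comparable in sign to the fixed small quantities ${\rm Tr}(v^{-1}f_i^\vee x)$ can only persist if some coordinate $y_i$ is correspondingly small relative to $\Vert y\Vert$; this is what produces the bound $\Vert v\Vert < c(B)\sup_i \Vert y\Vert^2/|y_i|^2$. The symmetric scenario, with $\Vert v^{-1}\Vert$ large, yields the second bound in terms of $x$. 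Since $|{\rm N}(v)|=1$ forces $\Vert v\Vert$ and $\Vert v^{-1}\Vert$ to both be $\geq 1$ and at least one of them to be the "large" one governing the geometry, exactly one of the two alternatives must hold. \textbf{I expect the main obstacle} to be bookkeeping the exponent $2$ and extracting the precise factor $\Vert y\Vert^2/|y_i|^2$: this comes from the fact that one must control not just that a single pairing ${\rm Tr}(vf_iy)$ is small, but that the \emph{geometry of the whole cone} $C_{vB}(y)$ contains $x$, which intertwines the large-$\Vert v\Vert$ direction with a small coordinate direction of $y$, and the square reflects composing a lower bound on one pairing with an upper bound forced by the cone-membership sign conditions.
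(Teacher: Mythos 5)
Your starting point coincides with the paper's: from $\chi_{vB}(x,y)\neq 0$ one extracts the sign conditions ${\rm Tr}(vf_i y)\,{\rm Tr}(v^{-1}f_i^\vee x)\geq 0$ for every $i$. But the proposal stops exactly where the real proof begins, and the step you defer (\emph{``I expect the main obstacle to be bookkeeping the exponent $2$''}) is not bookkeeping --- it is the entire content of the lemma. Sign conditions alone carry no magnitude information, so your assertion that a large $\Vert v\Vert$ ``can only persist if some coordinate $y_i$ is correspondingly small'' does not follow from anything you have written. The missing ingredient is the identity
$$\sum_{i=1}^n {\rm Tr}(vf_iy)\,{\rm Tr}(v^{-1}f_i^\vee x)={\rm Tr}(xy),$$
which, combined with the nonnegativity of each summand, shows that every individual product is at most ${\rm Tr}(xy)\leq n\Vert x\Vert\,\Vert y\Vert$; hence for each $i$ at least one of the two factors is small (after normalizing $\Vert x\Vert=\Vert y\Vert$, which is legitimate by homogeneity of $\chi_{vB}$). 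This identity is the only bridge from cone membership to quantitative smallness, and it is absent from your argument.

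Even granting that bridge, the dichotomy cannot be run index by index as you suggest: knowing that a \emph{single} pairing $|{\rm Tr}(vf_{i_0}y)|$ is small does not bound $\Vert v\Vert$ from above. The paper's proof requires a counting step --- partition $\{1,\dots,n\}$ into $I_1$ (indices with $|{\rm Tr}(vf_iy)|<\Vert y\Vert$) and $I_2$ (indices with $|{\rm Tr}(v^{-1}f_i^\vee x)|<\Vert x\Vert$), compare with the disjoint sets $J_1=\{i:\ |v_i|\geq\Vert v\Vert^{1/2}\}$ and $J_2=\{i:\ |v_i|^{-1}\geq\Vert v\Vert^{1/2}\}$ to conclude $|J_1|\leq|I_1|$ or $|J_2|\leq|I_2|$ --- followed by solving the resulting system of $|I_1|$ simultaneous inequalities for the quantities $y_iv_i/\Vert v\Vert$ via Cramer's rule. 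It is precisely there that the hypothesis $B\in{\cal U}$ (no vanishing minor), and its dual counterpart $B^\vee\in{\cal U}$ in the symmetric case, are used, and there that the exponent $2$ arises from the threshold $\Vert v\Vert^{1/2}$. None of this apparatus appears in your proposal, so the two displayed inequalities are asserted rather than derived: there is a genuine gap.
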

\begin{proof}
Remarquons que $\chi_{vB}(x,y)$ ne change pas si on multiplie $x$ ou $y$ par $\lambda>0$;
on peut donc supposer $\Vert x\Vert=\Vert y\Vert$.

Soient $f_1,\dots,f_n$ les \'el\'ements de $B$ et $f_1^\vee,\dots,f_n^\vee$ ceux de $B^\vee$.
   On a $\chi_{vB}(x,y)=\chi_B(v^{-1}x,vy)$ et donc
$\chi_{vB}(x,y)\neq 0$ implique ${\rm Tr}(vf_iy){\rm Tr}(v^{-1}f_i^\vee x)\geq 0$
 pour
tout $1\leq i\leq n$.  Comme 
$$\sum_{i=1}^n {\rm Tr}(vf_iy){\rm Tr}(v^{-1}f_i^\vee x)={\rm Tr}(xy),$$
on d\'eduit, que si $1\leq i\leq n$, une
au moins des in\'egalit\'es $|{\rm Tr}(vf_iy)|\leq \Vert y\Vert$ et $|{\rm Tr}(v^{-1}f_i^\vee x)|
\leq \Vert x\Vert$ est v\'erifi\'ee.
Il existe alors une partition de $\{1,\dots,n\}$
en deux parties $I_1$ et $I_2$,
telles
que l'on ait $|{\rm Tr}(vf_iy)|<\Vert y\Vert$ pour tout $i\in I_1$
et $|{\rm Tr}(v^{-1}f_i^\vee x)|<\Vert x\Vert$ pour tout $i\in I_2$.  

Soient $J_1=\{i\ |\ |v_i|\geq \Vert v\Vert^{1\over2}\}$ et
$J_2=\{i\ |\ |v_i|^{-1}\geq \Vert v\Vert^{1\over2}\}$.
On a $J_1\cap J_2=\emptyset$ et  
donc, l'une au-moins des 2 in\'egalit\'es $|J_1|\leq |I_1|$ et $|J_2|\leq |I_2|$
est v\'erifi\'ee.  Supposons que l'on soit dans le premier cas et renum\'erotons
les $v_i$ et les $f_j$ de telle sorte que $J_1=\{1,\dots,r\}$ et $I_1=
\{1,\dots,s\}$ avec $r\leq s$.  

Soient $a_{j,1},\dots,a_{j,n}$ les
coordonn\'ees du vecteur $f_j$; on obtient alors
$$|a_{j,1}y_1v_1+\cdots+  a_{j,s}y_sv_s| \leq
\Vert y\Vert+|a_{j,s+1}y_{s+1}v_{s+1}|+\cdots+|a_{j,n}y_nv_n|,
\quad{\text{pour $1\leq j\leq s$.}}$$
Soit $\delta_0$ le sup. des valeurs absolues des coefficients
de $B$, et supposons $\Vert v\Vert\geq 1$.
On obtient alors, divisant chacune des in\'egalit\'es
du syst\`eme par $\Vert v\Vert$ et utilisant le fait que
$|v_i|\leq\Vert v\Vert^{1\over2}$ si $i\geq s+1$,
$$
|a_{j,1}y_1{{v_1}\over{\Vert v\Vert}}+\cdots+  a_{j,s}y_s{{v_s}\over{\Vert v\Vert}}| \leq
(1+(n-s)\delta_0){{\Vert y\Vert}\over {\sqrt{\Vert v\Vert}}},
\quad{\text{pour $1\leq j\leq s$.}}$$
On peut r\'esoudre le syst\`eme exprimant les $y_i{{v_i}\over{\Vert v\Vert}}$
en fonction des $\sum a_{j,i}y_i{{v_i}\over{\Vert v\Vert}}$
gr\^ace aux formules de Cramer.  Soit
$\delta_1$ le sup. de l'inverse de la valeur absolue des mineurs
de $B$ (qui existe car $B\in{\cal U}$ et donc aucun de
ces mineurs n'est nul);  on a alors
$$\sup_{1\leq i\leq s}{{|y_iv_i|}\over{\Vert v\Vert}}\leq
\delta_0^{s-1}\delta_1(1+(n-s)\delta_0){{\Vert y\Vert}\over
{\sqrt{\Vert v\Vert}}},$$ d'o\`u l'on tire $\Vert v\Vert
\leq C\frac{\Vert y\Vert^2}{|y_i|^2}$, o\`u l'on a pos\'e
$C= (\delta_0^{s-1}\delta_1(1+n\delta_0))^2$,
et choisi $i$ tel que $|v_i|=\Vert v\Vert$.

 Dans le cas o\`u $|J_2|\leq |I_2|$, on obtient
une majoration du m\^eme type mais o\`u les r\^oles de $x$ et $y$
ont \'et\'e \'echang\'es et $B$ est remplac\'e par $B^\vee$ et $v$ par $v^{-1}$.  Cela permet de conclure.
\end{proof}

\subsubsection{D\'ecompositions de Shintani de $\Tr$}\label{SSS11}
Soit $V$ un r\'eseau de $\Tr$ tel que $v_n>0$ pour tout $v=(v_1,\dots,v_n)\in V$
et $v\mapsto v_n$ soit injectif sur $V$.

Une
{\it d\'ecomposition de Shintani de $\Tr$ modulo $V$} est un
\'el\'ement ${\cal B}=\sum n_BB$ de ${\bf Z}[{\cal U}(\R)]$ tel que, pour presque tout $y$, si
$x\in\Tr$, alors
\begin{equation}\label{somme}
\sum_{v\in V}{\rm N}(v)\chi_{v{\cal B}}(x,y)=\chi_{(e_1,\dots,e_n)}(x,y),
\quad{\text{o\`u $\chi_{\cal B}=\sum n_B\chi_B$.}}
\end{equation}
On note ${\rm Shin}(\Tr/V)$ l'ensemble des d\'ecompositions de Shintani de $\Tr$ modulo~$V$.

\begin{lemm}\phantomsection\label{R6}  
Si  $y\in\Tr$ est fix\'e, la s\'erie~{\rm(\ref{somme})}
est localement finie sur~$\Tr$.
\end{lemm}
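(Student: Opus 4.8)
We must show that for fixed $y \in \Tr$, the series $\sum_{v \in V} {\rm N}(v)\chi_{v{\cal B}}(x,y)$ is locally finite on $\Tr$, i.e. every point $x$ has a neighbourhood meeting the support of only finitely many nonzero terms.

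The plan is to reduce to a single basis and then apply the support estimate of Lemma~\ref{R5}. Since ${\cal B}=\sum n_B B$ is a finite linear combination, we have $\chi_{v{\cal B}}=\sum_B n_B\chi_{vB}$, so it suffices to prove that for each fixed $B$ in the (finite) support of ${\cal B}$ the family of functions $x\mapsto \chi_{vB}(x,y)$, indexed by $v\in V$, is locally finite on $\Tr$. Note that $B\in{\cal U}(\R)$ by the definition of a Shintani decomposition, and that $B^\vee\in{\cal U}$ as well: the condition ``all minors nonzero'' defining ${\cal U}$ is invariant under $B\mapsto B^\vee$, since passing to the dual basis for the trace form replaces the matrix of $B$ by the inverse of its transpose, and Jacobi's relation turns the nonvanishing of all minors of a matrix into the nonvanishing of all complementary minors of its inverse. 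Hence Lemma~\ref{R5} applies to each such $B$.

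Fix $x_0\in\Tr$. Since $x_0$ has all coordinates nonzero, we may choose a compact neighbourhood $U$ of $x_0$ contained in $\Tr$ and bounded away from every coordinate hyperplane, so that
$$R:=\sup_{x\in U}\ \sup_{1\leq i\leq n}\frac{\Vert x\Vert^2}{|x_i|^2}<+\infty.$$
For the fixed $y\in\Tr$ we likewise set $R':=\sup_{1\leq i\leq n}\frac{\Vert y\Vert^2}{|y_i|^2}<+\infty$. By Lemma~\ref{R5}, and recalling that $|{\rm N}(v)|=1$ for $v\in V$, if $x\in U$ and $\chi_{vB}(x,y)\neq 0$ then either $\Vert v\Vert<c(B)R'$ or $\Vert v^{-1}\Vert<c(B)R$.

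Thus every $v\in V$ contributing a nonzero term somewhere on $U$ lies in $\{v\in V:\ \Vert v\Vert<c(B)R'\}\cup\{v\in V:\ \Vert v^{-1}\Vert<c(B)R\}$. Both sets are finite: since $|{\rm N}(v)|=\prod_i|v_i|=1$, the bound $\Vert v^{-1}\Vert<c(B)R$ forces $\min_i|v_i|>1/(c(B)R)$ and hence $\Vert v\Vert<(c(B)R)^{n-1}$, so each set is contained in a ball $\{\Vert v\Vert<R''\}$. Such a ball meets the lattice $V$ of $\Tr$ in finitely many points: writing $|{\rm N}(v)|=1$ and $\Vert v\Vert<R''$, the vector ${\rm Log}(v)$ lies in the region of the trace-zero hyperplane cut out by $w_i<\log R''$ for all $i$, which is bounded (the constraint $\sum_i w_i=0$ then also bounds each $w_i$ from below), and contains finitely many points of the lattice ${\rm Log}(V)$ (alternatively, finiteness of balls follows from Lemma~\ref{D19.1}). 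Summing over the finitely many $B$ in the support of ${\cal B}$, only finitely many $v\in V$ give a nonzero term on $U$, which is exactly local finiteness.

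The only delicate point is the choice of $U$: the second bound in Lemma~\ref{R5} involves $\sup_i\Vert x\Vert^2/|x_i|^2$, which blows up as $x$ approaches a coordinate hyperplane, so local finiteness can genuinely fail along $\{x_i=0\}$. It is precisely the restriction to $\Tr=(\R^\dual)^n$ --- where all coordinates are nonzero --- that lets us keep this quantity bounded on a neighbourhood and conclude.
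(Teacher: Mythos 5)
Votre preuve est correcte et suit essentiellement la même démarche que celle de l'article : on applique le lemme~\ref{R5} sur un voisinage de $x_0$ évitant les hyperplans de coordonnées, puis on conclut par la finitude de l'ensemble des $v\in V$ de norme (ou de norme inverse) bornée. Vous explicitez au passage deux points que l'article laisse implicites --- le fait que $B\in{\cal U}$ entraîne $B^\vee\in{\cal U}$ (via l'identité de Jacobi sur les mineurs), et la finitude des boules dans le réseau $V$ --- ce qui est bienvenu mais ne change pas la structure de l'argument.
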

\begin{proof}
Soit $a=(a_1,\dots,a_n)\in({\bf R}^\dual )^n$ et soit $U$ la
boule ouverte de centre $a$ et de rayon ${1\over2}\Vert a\Vert$.
Si $x\in U$, alors $\Vert x\Vert\leq{3\over2}\Vert a\Vert$ et
$|x_i|\geq\frac{1}{2}\inf_i|a_i|$.
On tire alors du lemme~\ref{R5} que
si $v\in V$ est tel qu'il existe $x\in U$ v\'erifiant
 $\chi_{v{\cal B}}(x,y)\neq0$, alors
$\inf(\Vert v\Vert,\Vert v^{-1}\Vert)\leq c(B,a,y)$.
L'ensemble des $v\in V$ v\'erifiant ceci \'etant fini, cela permet de
conclure.
\end{proof}
\begin{prop}\phantomsection\label{R2} On a
$${\rm Shin}(\Tr/V)=
\Z[{{\cal U}({\bf R})}]\cap {\rm Shin}(\Tc/V)$$
\end{prop}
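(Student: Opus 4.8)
The plan is to fix ${\cal B}\in\Z[{\cal U}(\R)]$ and prove both inclusions simultaneously by showing that the defining identity~(\ref{somme}) of ${\rm Shin}(\Tr/V)$ is equivalent to the defining identity~(\ref{N1}) of ${\rm Shin}(\Tc/V)$; this makes sense because ${\cal U}(\R)\subset{\cal U}(\C)={\cal U}$ (the nonvanishing of the minors does not depend on whether the vectors are read over $\R$ or over $\C$), so $\Z[{\cal U}(\R)]\subset\Z[{\cal U}]$ and both sides of the proposition involve the same ambient objects. The bridge is the partial Fourier transform of lemma~\ref{R1}: I restrict $z$ to the slice $z=2i\pi t+y$ with $y\in\Tr$ fixed and $t\in\R^n$, and transform in $t$. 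Each $vB$ again lies in ${\cal U}$ (its minors are those of $B$ scaled by products of the nonzero $v_i$), so lemma~\ref{R1} computes the Fourier transform of $g_{vB}(2i\pi t+y)$ as $\chi^0_{vB}(x,y)\,e^{-{\rm Tr}(xy)}$, while the trivial term gives the transform of $\frac{1}{{\rm N}(2i\pi t+y)}=g_{(e_1,\dots,e_n)}(2i\pi t+y)$ as $\chi^0_{(e_1,\dots,e_n)}(x,y)\,e^{-{\rm Tr}(xy)}$. The sign factors, and in particular the occurrence of ${\rm N}(v)$ in~(\ref{somme}), are governed by the determinant sign $\epsilon(\det vB)={\rm N}(v)\epsilon(\det B)$ carried by $\chi^0_{vB}$ together with the transformation rule of lemma~\ref{signe}.

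For the inclusion $\Z[{\cal U}(\R)]\cap{\rm Shin}(\Tc/V)\subset{\rm Shin}(\Tr/V)$ I would start from~(\ref{N1}): by remark~\ref{Shi6} and lemma~\ref{D20.3} the series $\sum_{v\in V} g_{v{\cal B}}$ converges to $\frac{1}{{\rm N}(z)}$ both almost everywhere and in ${\cal S}'$. Fixing a generic $y$ and transforming the slice identity in $t$ term by term (legitimate because the Fourier transform is continuous on ${\cal S}'$) produces, after cancelling the common factor $e^{-{\rm Tr}(xy)}$, the identity $\sum_{v\in V}\chi^0_{v{\cal B}}(x,y)=\chi^0_{(e_1,\dots,e_n)}(x,y)$. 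The local finiteness of the left-hand sum (lemmas~\ref{R5} and~\ref{R6}) turns this almost-everywhere identity into a pointwise one; letting then $x\mapsto x+te_n$ and $t\to0^+$ and exchanging the limit with the locally finite sum passes from $\chi^0$ to $\chi$ and yields exactly~(\ref{somme}), so ${\cal B}\in{\rm Shin}(\Tr/V)$.

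For the reverse inclusion I run this backwards. Assuming~(\ref{somme}) for almost all $y$, the function $x\mapsto\sum_{v\in V}{\rm N}(v)\chi_{v{\cal B}}(x,y)\,e^{-{\rm Tr}(xy)}$ is, for such $y$, a locally finite combination of $\chi^0$-type terms, hence a tempered distribution in $x$; applying the inverse of lemma~\ref{R1} term by term recovers $\sum_{v\in V} g_{v{\cal B}}(2i\pi t+y)=\frac{1}{{\rm N}(2i\pi t+y)}$ for almost all $t$ and almost all $y$, that is, for almost every $z$ with all ${\rm Re}\,z_j\neq0$. This is an equality on a set of full measure where the series converges, so part (ii) of remark~\ref{Shi6} forces the apparent singularities to cancel and promotes it to~(\ref{N1}) wherever the series converges; thus ${\cal B}\in{\rm Shin}(\Tc/V)$.

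The main obstacle is the interchange of the Fourier transform with these infinite sums, which converge only conditionally --- distributionally for $\sum_{v\in V} g_{v{\cal B}}$, and only locally finitely for $\sum_{v\in V}{\rm N}(v)\chi_{v{\cal B}}$ --- so one may not transform term by term without justification. The crux is to establish a slice-wise ${\cal S}'(\R^n)$-convergence refining part (ii) of lemma~\ref{D20.3}, and to use the support estimate of lemma~\ref{R5} to ensure that for almost every fixed $y$ the transformed series is genuinely locally finite in $x$; once this is granted, the passage from $\chi^0$ to $\chi$ through the $e_n$-limit and the ${\rm N}(v)$ and sign bookkeeping of lemmas~\ref{R1} and~\ref{signe} are routine.
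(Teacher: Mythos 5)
Your proposal follows essentially the same route as the paper: both inclusions are obtained by taking the partial Fourier transform, in $t$, of the slice $z=2i\pi t+y$ via le lemme~\ref{R1}, with the same use of la rem.\,\ref{Shi6} in the reverse direction and the same sign bookkeeping. The outline is correct.

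The one point you leave open --- and which you rightly call the crux --- is the justification of the term-by-term Fourier transform on the slice. The paper closes it with an observation more elementary than the {\og slice-wise ${\cal S}'$-convergence refining le (ii) du lemme~\ref{D20.3}\fg} you call for: since $|g_{vB}(2i\pi t+y)|\leq|g_{vB}(y)|$ for all $t$, the (i) du lemme~\ref{D20.3} (not the (ii)) already gives, for presque tout $y$, the \emph{normal} convergence in $t$ of $\sum_{v}g_{v{\cal B}}(2i\pi t+y)$ on $\R^n$; normal convergence implies convergence in ${\cal S}'(\R^n)$, and continuity of the Fourier transform on ${\cal S}'$ does the rest, in both directions (the domination uses only ${\cal B}\in\Z[{\cal U}]$, so it is also available in the reverse inclusion where one does not yet know that ${\cal B}\in{\rm Shin}(\Tc/V)$). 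Without this, your appeal to {\og continuity of the Fourier transform on ${\cal S}'$\fg} has nothing to act on. A secondary difference: to upgrade the resulting presque-partout identity in $x$ to a pointwise one, you use local finiteness plus the limit $x\mapsto x+te_n$, $t\to0^+$; this works, but you should note that the exceptional set is contained in a locally finite union of hyperplans not containing $e_n$ (both sides being locally constant off such hyperplans), so that the line $x+te_n$ meets it in a discrete set. The paper instead derives continuity of $\sum_v\chi_{v{\cal B}}(\cdot,y)$ on $\Tr$ from the cocycle relation (comme dans le lemme~\ref{Shi5}); either argument is acceptable.
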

\begin{proof}
Soit ${\cal B}\in\Z[{{\cal U}({\bf R})}]\cap{\rm Shin}(\Tc/V)$.
Comme $|g_{vB}(2i\pi t+y)|\leq |g_{vB}(y)|$, le lemme~\ref{D20.3}
assure que,
pour presque tout
$y\in{\bf R}^n$, la s\'erie de fonctions (de la variable $t$) $\sum_{v\in V}g_{v{\cal B}}(2i\pi t+y)$
converge normalement sur ${\bf R}^n$, et la somme vaut ${\rm N}(2i\pi t+y)^{-1}$
puisqu'on a suppos\'e ${\cal B}\in {\rm Shin}(\Tc/V)$.

La convergence normale assure la convergence dans l'espace
des distributions temp\'er\'ees
et on peut donc prendre
la transform\'ee de Fourier des deux 
membres, et multiplier par $e^{{\rm Tr}(xy)}$.  On obtient alors
$\sum_{v\in V}\chi_{v{\cal B}}(x,y)=\chi_{(e_1,\dots,e_n)}(x,y)$,
o\`u les deux membres sont consid\'er\'es comme des distributions (en $x$)
et $y$ est fix\'e (dans le compl\'ementaire de l'ensemble de mesure nulle
o\`u il n'y a pas convergence normale). Cela implique que les deux membres
sont \'egaux pour presque tout $x$.

Par ailleurs, $\sum_{v\in V}\partial_{n-1}v{\cal B}=0$ puisque ${\cal B}\in {\rm Shin}(\Tc/V)$.
On a donc, de m\^eme, $\sum_{v\in V}\partial_{n-1}v^{-1}{\cal B}^\vee=0$, et comme $\chi_{\cal B}$
est continue en dehors de discontinuit\'es le long des hyperplans d'\'equations
${\rm Tr}(fx)=0$, pour $f\in |\partial{\cal B}^\vee|$, on d\'eduit de la relation de cocycle
(comme dans le lemme~\ref{Shi5})
que $\sum_{v\in V}\chi_{v{\cal B}}(x,y)$ est continue sur $\Tr$. Comme
il en est de m\^eme de $\chi_{(e_1,\dots,e_n)}(x,y)$, l'\'egalit\'e presque partout entra\^{\i}ne
l'\'egalit\'e pour tout $x$ (\`a $y$ fix\'e, pour presque tout $y$), ce qui 
fournit l'inclusion du membre de droite dans le membre de gauche.

Pour d\'emontrer l'inclusion inverse, on multiplie l'identit\'e (\ref{somme}) par
$e^{-{\rm Tr}(xy)}$ et on prend la transform\'ee de Fourier inverse (la s\'erie
converge dans les distributions temp\'er\'ees d'apr\`es la discussion ci-dessus et
donc la s\'erie des transform\'ees de Fourier inverses aussi).  On en d\'eduit que,
pour presque tout $y$,  
$\frac{1}{{\rm N}(x+iy)}=\sum_{v\in V}g_{v{\cal B}}(x+iy)$ en tant que distributions
en $x$.  Mais on a $\sum_{v\in V}\partial_{n-1}v{\cal B}=0$, et on peut donc resommer
la s\'erie ci-dessus localement pour qu'elle devienne absolument convergente;
sa somme est alors holomorphe en $z=x+iy$, et donc \'egale \`a $\frac{1}{{\rm N}(z)}$
sur $\Tc$ puisque la diff\'erence est holomorphe et, pour presque tout $y$, nulle en
tant que distribution en $x$.  On en d\'eduit que
$\sum_{v\in V}g_{v{\cal B}}(z)=\frac{1}{{\rm N}(z)}$ d\`es que la s\'erie converge,
ce qui permet de conclure.
\end{proof}

\begin{coro}\phantomsection\label{R3.1}
Si ${\cal B}\in {\rm Shin}(\Tr/V)$, alors
${\cal B}^\vee\in {\rm Shin}(\Tr/V)$.
\end{coro}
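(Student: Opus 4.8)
The plan is to derive this as a formal consequence of the two structural results already in hand: Proposition~\ref{R2}, which identifies ${\rm Shin}(\Tr/V)$ with $\Z[{\cal U}(\R)]\cap{\rm Shin}(\Tc/V)$, and Proposition~\ref{Shi7}, which says that the duality involution preserves ${\rm Shin}(\Tc/V)$. First I would take ${\cal B}\in{\rm Shin}(\Tr/V)$ and apply Proposition~\ref{R2} to get ${\cal B}\in\Z[{\cal U}(\R)]\cap{\rm Shin}(\Tc/V)$; in particular ${\cal B}\in{\rm Shin}(\Tc/V)$, so Proposition~\ref{Shi7} yields ${\cal B}^\vee\in{\rm Shin}(\Tc/V)$.

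It then remains only to check that ${\cal B}^\vee$ still belongs to $\Z[{\cal U}(\R)]$, after which Proposition~\ref{R2} read in the reverse direction gives ${\cal B}^\vee\in{\rm Shin}(\Tr/V)$ and closes the argument. For this I would verify that the involution $B\mapsto B^\vee$ maps ${\cal U}(\R)$ into itself. Reality is clear: the pairing $(x,y)\mapsto{\rm Tr}(xy)$ is the standard real symmetric form $\sum_i x_iy_i$, so the basis dual to a real basis is again real, whence $f_i^\vee\in\R^n$ whenever $f_i\in\R^n$. For membership in ${\cal U}$, let $M$ be the matrix whose columns are the $f_i$; then $B^\vee$ is given by the columns of $(M^{-1})^{T}$, up to rescaling each column (which is irrelevant, both because the $f_i$ are points of $X$ and because column scaling does not alter the vanishing of minors). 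By the Jacobi identity on complementary minors, every minor of $(M^{-1})^{T}$ equals, up to sign and a factor $\det(M)^{-1}$, a complementary minor of $M$; hence all minors of $B^\vee$ are nonzero precisely when all minors of $M$ are, i.e.\ $B\in{\cal U}(\R)$ forces $B^\vee\in{\cal U}(\R)$. Extending by $\Z$-linearity gives ${\cal B}^\vee\in\Z[{\cal U}(\R)]$.

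Putting the two halves together, ${\cal B}^\vee\in\Z[{\cal U}(\R)]\cap{\rm Shin}(\Tc/V)={\rm Shin}(\Tr/V)$. The whole argument is essentially formal once Propositions~\ref{R2} and~\ref{Shi7} are available; the only point requiring genuine (if routine) verification is that duality stabilizes $\Z[{\cal U}(\R)]$, and I expect the minor-preservation step via complementary minors to be the part most worth making explicit.
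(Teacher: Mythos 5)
Votre démonstration suit exactement la voie du texte, qui se contente d'invoquer les prop.\,\ref{Shi7} et~\ref{R2}; l'argument est correct. La vérification que l'involution $B\mapsto B^\vee$ stabilise $\Z[{\cal U}(\R)]$ (via l'identité de Jacobi sur les mineurs complémentaires) est un détail que le texte laisse implicite et que vous explicitez à juste titre.
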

\begin{proof}
Cela r\'esulte des prop.\,\ref{Shi7} et~\ref{R2}.
\end{proof}

\begin{coro}\phantomsection\label{R3}  
Si $\eta_1,\dots,\eta_{n-1}$ est une base de $V$ et si
$B_\sigma\in{\cal U}$ pour tout\footnote{ Voir le cas complexe
pour la d\'efinition de $B_\sigma$.} 
 $\sigma\in S_{n-1}$, alors
$$\sum_{\sigma\in S_{n-1}}\epsilon({\rm Log}\,B_\sigma)\,B_\sigma\in {\rm Shin}(\Tr/V)$$
\end{coro}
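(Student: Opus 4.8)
The plan is to read this off directly from the structural identity of Proposition~\ref{R2}, which expresses ${\rm Shin}(\Tr/V)$ as the intersection $\Z[{\cal U}(\R)]\cap{\rm Shin}(\Tc/V)$. Setting ${\cal B}=\sum_{\sigma\in S_{n-1}}\epsilon({\rm Log}\,B_\sigma)\,B_\sigma$, it then suffices to verify the two membership conditions ${\cal B}\in\Z[{\cal U}(\R)]$ and ${\cal B}\in{\rm Shin}(\Tc/V)$ separately; both are already essentially in hand, the whole content of the corollary having been pushed into Propositions~\ref{S7} and~\ref{R2}.

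The first condition is immediate. Since $V$ is a r\'eseau of $\Tr$, its basis $\eta_1,\dots,\eta_{n-1}$ consists of vectors with real coordinates, so the $f_{i,\sigma}=\prod_{j<i}\eta_j$ and hence each base $B_\sigma$ are real. The hypothesis $B_\sigma\in{\cal U}$ for every $\sigma$ therefore reads $B_\sigma\in{\cal U}(\R)$, and the integer weights $\epsilon({\rm Log}\,B_\sigma)$ do not affect membership in $\Z[{\cal U}(\R)]$; thus ${\cal B}\in\Z[{\cal U}(\R)]$.

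For the second condition I would invoke Proposition~\ref{S7}, the point to record being that a r\'eseau of $\Tr$ is a fortiori a r\'eseau of $\Tc$: the map ${\rm Log}$ and the trace-zero hyperplane $H$ are the same for both, and $\Tr\subset\Tc$, so the defining isomorphism condition is unchanged. Proposition~\ref{S7} then applies to $V$ verbatim, and since $B_\sigma\in{\cal U}$ for all $\sigma$ its conclusion~(\ref{S7.3}) gives precisely ${\cal B}\in{\rm Shin}(\Tc/V)$; note that the weights $\epsilon({\rm Log}\,B_\sigma)$ are literally the same in both statements, being defined through the logarithmic images of the $B_\sigma$, which are insensitive to whether one works in $\Tr$ or in $\Tc$. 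Combining the two memberships with Proposition~\ref{R2} yields ${\cal B}\in\Z[{\cal U}(\R)]\cap{\rm Shin}(\Tc/V)={\rm Shin}(\Tr/V)$, which is the claim. I expect no genuine obstacle here: the only things to check by hand are the reality of the $B_\sigma$ and the harmless identification of the two notions of r\'eseau, all the analytic substance already residing in the Fourier-transform argument of Proposition~\ref{R2}.
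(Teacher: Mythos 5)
Your proposal is correct and follows exactly the paper's route: the paper's proof of this corollary is the one-line observation that it follows from~(\ref{S7.3}) together with the prop.\,\ref{R2}, i.e.\ precisely the combination you spell out. The only extra content in your write-up is the (harmless and accurate) verification that the $B_\sigma$ are real and that a r\'eseau of $\Tr$ is a r\'eseau of $\Tc$, which the paper leaves implicit.
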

\begin{proof}
Cela r\'esulte de (\ref{S7.3}) et de la prop.\,\ref{R2}.
\end{proof}

\begin{rema}\phantomsection\label{R4}
(i) Si $V\subset\Tp$, et donc $\eta_1,\dots,\eta_{n-1}\in\Tp$, on a
$\chi_{B_\sigma}(x,y)=\epsilon(B_\sigma){\bf 1}_{C_{B_\sigma}}(x)$ si $y\in\Tp$.
Il s'ensuit que
$$\big\{\big(B_\sigma,\epsilon({\rm Log}\,B_\sigma)\epsilon(B_\sigma)\big),\sigma\in S_{n-1}\big\}
\in {\rm Shin}(\Tp/V)$$
ce qui fournit une preuve alternative du r\'esultat de~\cite{DF2} mentionn\'e dans la rem.\,\ref{sigma}.

(ii) Toujours si $V\subset\Tp$, 
$$\{(B,n_B)\}\in {\rm Shin}(\Tp/V)\Longrightarrow
\sum\epsilon(B)n_B\,B\in {\rm Shin}(\Tr/V)$$ 
(Cela suit du (ii) de la rem.\,\ref{S6}
et de la prop.\,\ref{R2}.)
\end{rema}

\section{M\'ethode de Shintani et s\'eries g\'en\'eratrices}
Soit $F$ un corps totalement r\'eel de degr\'e $[F:{\bf Q}]=n$.
Soient ${\cal O}_F$ son anneau d'entiers et $U_F$ le groupe des
unit\'es de ${\cal O}_F$.  Si $A$ est un anneau, soit ${\cal S}(F,A)$
l'ensemble des fonctions $\phi$ de $F$ dans $A$ qui sont localement
constantes et \`a support compact; localement constante signifiant
qu'il existe un id\'eal fractionnaire ${\goth a}$ de $F$ tel
que l'on ait $\phi(\alpha)=\phi(\beta)$ si $\alpha-\beta\in {\goth a}$ et \`a support
compact signifiant qu'il existe un id\'eal fractionnaire ${\goth b}$
de $F$ tel que l'on ait $\phi(\alpha)=0$ si $\alpha\notin{\goth b}$.

Soient $\tau_1,\dots,\tau_n$ les $n$ plongements de $F$ dans ${\bf R}$.
Consid\'erons $F$ comme un sous-${\bf Q}$-espace vectoriel de ${\bf R}^n$
en envoyant $\alpha\in F$ sur $(\alpha_1,\dots,\alpha_n)\in{\bf R}^n$
o\`u $\alpha_i=\tau_i(\alpha)$.  On peut alors consid\'erer les id\'eaux
fractionnaires de $F$ comme des r\'eseaux de ${\bf R}^n$.

Si $i\in\{1,\dots,n\}$, soit $\epsilon_i$ le caract\`ere de $({\bf R}^\dual)^n$
d\'efini par $\epsilon_i(x)=\epsilon(x_i)$ si $x=(x_1,\dots,x_n)\in({\bf R}^\dual)^n$.
Si $\psi=\prod_{i=1}^n\epsilon_i^{a_i}$ avec $a_i\in\Z/2\Z$,
soit ${\cal S}(F,\psi,A)$
le sous-espace de ${\cal S}(F,A)$ des fonctions $\phi$ v\'erifiant
$\phi(u\alpha)=\psi(u)\phi(\alpha)$ pour tout $u\in U_F$
et tout $\alpha\in F^\dual$.  

\Subsection{M\'ethode de Shintani}\label{SSS13}
On note ${\rm Shin}(F/V)$ l'ensemble des ${\cal B}=\sum n_BB\in{\rm Shin}(\Tr/V)$ 
dont le support $|{\cal B}|$ est inclus dans $F$. 
\begin{lemm}
${\rm Shin}(F/V)$ est non vide.
\end{lemm}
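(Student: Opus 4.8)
The plan is to exhibit an explicit element of ${\rm Shin}(\Tr/V)$ whose support already lies in $F$, by invoking the construction of cor.\,\ref{R3}. Recall that $V$ is here a subgroup of finite index of $U_F$, so every $\Z$-basis $\eta_1,\dots,\eta_{n-1}$ of $V$ consists of units of $\O_F$, hence of elements of $F$. For such a basis the vectors $f_{i,\sigma}=\prod_{j<i}\eta_{\sigma(j)}$ defining $B_\sigma$ are products of units, so they again lie in $F$; consequently the support of
$${\cal B}=\sum_{\sigma\in S_{n-1}}\epsilon({\rm Log}\,B_\sigma)\,B_\sigma$$
is contained in $F$ by construction. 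Granting that $B_\sigma\in{\cal U}$ for every $\sigma$, cor.\,\ref{R3} gives ${\cal B}\in{\rm Shin}(\Tr/V)$, and the inclusion $|{\cal B}|\subset F$ then yields ${\cal B}\in{\rm Shin}(F/V)$, proving the lemma.

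The only point requiring work is therefore to produce a $\Z$-basis $\eta_1,\dots,\eta_{n-1}$ of $V$ for which $B_\sigma\in{\cal U}$ for all $\sigma\in S_{n-1}$; this I expect to be the main obstacle. The condition $B_\sigma\in{\cal U}$ amounts to the non-vanishing of every minor of the matrix with columns $f_{1,\sigma},\dots,f_{n,\sigma}$ written in the coordinates $\tau_1,\dots,\tau_n$. A maximal minor is nonzero precisely when the corresponding partial products $1,\eta_{\sigma(1)},\eta_{\sigma(1)}\eta_{\sigma(2)},\dots$ form a $\Q$-basis of $F$ (using that the isomorphism $F\otimes_\Q\R\cong\R^n$ turns $\Q$-linear independence into $\R$-linear independence of the images), and the smaller minors translate similarly into finitely many algebraic non-degeneracy conditions on the $\eta_j$ and their products.

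To meet all of these simultaneously, I would argue by genericity. Fixing one basis $u_1,\dots,u_{n-1}$ of $V$, I would consider the bases obtained by iterated transvections $u_i\mapsto u_iu_j^{N}$ (which remain $\Z$-bases of $V$) and observe that, along such a family, each minor of $B_\sigma$ is an exponential polynomial in the integer exponents. One checks that no such minor can vanish identically in the exponents --- otherwise the partial products, which generate a Zariski-dense subgroup of $\Tr$, would satisfy a fixed linear relation for all values of the exponents, an impossibility --- so each minor vanishes only on a thin set of exponents. As there are finitely many minors and finitely many $\sigma$, a common value of the exponents avoiding all these sets exists and yields the desired basis. The delicate part is precisely this uniform avoidance of all the (finitely many) minor-vanishing loci by a single integral choice of basis; everything else is formal once cor.\,\ref{R3} and the evident membership $|{\cal B}|\subset F$ are in hand.
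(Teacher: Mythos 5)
Your proposal is correct and takes exactly the paper's route: the paper's entire proof is the one-line remark that cor.~\ref{R3} can be used to construct explicit elements, the point being that a $\Z$-basis of $V$ consists of units of $\O_F$, so the support of $\sum_\sigma\epsilon({\rm Log}\,B_\sigma)B_\sigma$ automatically lies in $F$. You in fact go further than the paper by addressing the hypothesis $B_\sigma\in{\cal U}$, which the paper leaves implicit; your genericity sketch is the standard way to arrange it (just note that $V$ is Zariski-dense only in the hypersurface $|{\rm N}(y)|=1$ and not in $\Tr$, which still suffices because the vanishing of a minor is not a consequence of that equation).
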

\begin{proof}
On peut, par exemple, utiliser le cor.\,\ref{R3} pour en construire des \'el\'ements explicites.
\end{proof}

On note ${\cal U}_0(F)$ et ${\cal U}(F)$ l'ensemble des $(f_1,\dots,f_n)\in {\cal U}_0$ ou ${\cal U}$
avec $f_i\in F^\dual \subset
\xymatrix@C=1pc{\C^n\moins\{0\}\ar@{->>}[r]& {\bf P}^{n-1}(\C)}$.
\subsubsection{Sommation de s\'eries g\'eom\'etriques}\label{SSS14}
Si $\alpha\in F$, posons
 $$q^{\alpha}:=e^{-{\rm Tr}(\alpha y)}$$
On a $q^\alpha q^\beta=q^{\alpha+\beta}$ 
si $\alpha,\beta\in F$.
Soit 
$$\Lambda_{F,A}:=A[q^\alpha,\tfrac{1}{1-q^\alpha},\,\alpha\in F]$$
\begin{prop}\phantomsection\label{D1}
Soient ${\cal B}\in\Z[{\cal U}_0(F)]$ et $\phi\in{\cal S}(F,A)$.

{\rm   (i)}  
La s\'erie $\sum_{\alpha\in F}\phi(\alpha)\chi_{\cal B}(\alpha,y)q^\alpha$
converge pour tout
$y\in{\bf R}^n$ tel que ${\rm Tr}(fy)\ne 0$ si $f\in|{\cal B}|$.

{\rm (ii)} Il existe $F(\phi,{\cal B},y)\in \Lambda_{F,A}$ tel que
$F(\phi,{\cal B},y)=
\sum_{\alpha\in F}\phi(\alpha)\chi_{\cal B}(\alpha,y)
q^\alpha$ d\`es que la s\'erie converge.
\end{prop}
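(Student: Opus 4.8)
The plan is to fix $\phi\in{\cal S}(F,A)$ with period ideal ${\goth a}$ and support ideal ${\goth b}$, and to prove both assertions by reducing, through the linearity $\chi_{\cal B}=\sum_B n_B\chi_B$, to the case of a single basis $B=(f_1,\dots,f_n)\in{\cal U}_0(F)$. Fix $y$ with ${\rm Tr}(f_iy)\neq 0$ for all $i$; then $\epsilon(B,y)\in\{\pm1\}$ and $\chi_B(\cdot,y)=\epsilon(B,y){\bf 1}_{C_B(y)}$, so the series in question is $\epsilon(B,y)\sum_{\alpha\in{\goth b}\cap C_B(y)}\phi(\alpha)q^\alpha$: a sum over the lattice points of the fractional ideal ${\goth b}$ lying in a half-open simplicial cone.

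The first key step is to choose generators of the cone along which every term decays. After possibly changing the signs of the $f_i$ (which affects neither $g_B$ nor $\chi_B$, by rem.\,\ref{partit}(ii)), I may assume $e_n\in C(f_1,\dots,f_n)$, so that $C_B(y)=\R(\epsilon(f_1,y))f_1+\cdots+\R(\epsilon(f_n,y))f_n$. Setting $g_i=\epsilon(f_i,y)f_i$, the cone becomes $\{\sum_i t_ig_i:\ t_i\geq 0\text{ if }i\notin S,\ t_i>0\text{ if }i\in S\}$, where $S$ is the set of indices with $\epsilon(f_i,y)=-1$; the decisive point is that ${\rm Tr}(g_iy)=|{\rm Tr}(f_iy)|>0$ for every $i$, so each $q^{g_i}=e^{-{\rm Tr}(g_iy)}$ has absolute value $<1$. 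Membership in $C_B(y)$ is thus a coordinatewise condition on the $t_i$, which is exactly what lets the generating function factor as a product of one-dimensional geometric series.

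The second step is the lattice bookkeeping. Since the $g_i\in F^\dual$ and ${\goth a}\cap{\goth b}$ is a full lattice in $F$, I choose an integer $N\geq 1$ with $g_i':=Ng_i\in{\goth a}\cap{\goth b}$ for all $i$, and set $\Lambda'=\sum_i\Z g_i'\subseteq{\goth a}\cap{\goth b}$. Decomposing ${\goth b}=\bigsqcup_{\rho\in R}(\rho+\Lambda')$ over a finite system $R$ of representatives of ${\goth b}/\Lambda'$, the inclusion $\Lambda'\subseteq{\goth a}$ forces $\phi$ to be constant, equal to $\phi(\rho)$, on each coset. Writing $\rho=\sum_i r_ig_i'$ with $r_i\in\Q$, the points $\rho+\sum_i m_ig_i'$ ($m_i\in\Z$) lying in $C_B(y)$ are exactly those with $m_i\geq c_i$ for explicit integers $c_i$ (determined by $r_i$ and by whether $i\in S$), so the contribution of the coset $\rho$ is $\phi(\rho)q^\rho\prod_i\sum_{m_i\geq c_i}q^{m_ig_i'}=\phi(\rho)q^{\rho+\sum_i c_ig_i'}\prod_i(1-q^{g_i'})^{-1}$, each factor converging because $|q^{g_i'}|<1$.

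Summing over the finitely many $\rho\in R$ then gives, for (i), the absolute convergence of the series for every admissible $y$, and for (ii) the explicit identity
\[
\sum_{\alpha\in F}\phi(\alpha)\chi_B(\alpha,y)q^\alpha
=\epsilon(B,y)\sum_{\rho\in R}\phi(\rho)\,q^{\rho+\sum_i c_ig_i'}\prod_{i=1}^n\frac{1}{1-q^{g_i'}},
\]
whose right-hand side visibly lies in $\Lambda_{F,A}$ since $\rho+\sum_i c_ig_i'\in F$ and $g_i'\in F$. Taking the $\Z$-linear combination over $B$ weighted by $n_B$ yields $F(\phi,{\cal B},y)\in\Lambda_{F,A}$. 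I expect the only delicate point to be the sign and open/closed-face bookkeeping of the second paragraph: one must check that replacing $f_i$ by $g_i=\epsilon(f_i,y)f_i$ turns $C_B(y)$ into a product of rays $[0,\infty)$ or $(0,\infty)$ pointing in directions along which $q^\alpha$ decays, since it is precisely this that simultaneously guarantees convergence and the rational shape of the answer; the coset decomposition and the summation of the geometric series are then routine.
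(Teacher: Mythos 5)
Your reduction to a single $B$, the sign-normalization $g_i=\epsilon(f_i,y)f_i$ so that $|q^{g_i}|<1$, and the coset decomposition of $\goth b$ modulo the sublattice $\Lambda'=\sum_i\Z g_i'$ are exactly the paper's argument (the paper uses the box $X(aB,y)$ of representatives instead of your cosets, but this is the same bookkeeping), and they do establish (i): on each coset the series is a product of convergent geometric series, and there are finitely many cosets.

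There is, however, a genuine gap in (ii). The statement asks for a \emph{single} element of $\Lambda_{F,A}$ equal to the sum for \emph{every} $y$ where the series converges, whereas your closed formula
$\epsilon(B,y)\sum_{\rho}\phi(\rho)\,q^{\rho+\sum_i c_ig_i'}\prod_i(1-q^{g_i'})^{-1}$
depends on the chamber of $y$ through three things: the sign $\epsilon(B,y)$, the generators $g_i'=N\epsilon(f_i,y)f_i$ themselves, and the offsets $c_i$. As written you have produced one element of $\Lambda_{F,A}$ per chamber, and you never check that these $2^n$ expressions coincide --- which is the actual content of (ii) and the step on which the paper spends most of its proof (it is also what its warm-up computation for $F=\Q$ is designed to illustrate: the formulas for $y>0$ and $y<0$ look different and only agree because $\phi$ is periodic). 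The missing verification is this: for $i\in S$ one has $\frac{1}{1-q^{-Nf_i}}=\frac{-q^{Nf_i}}{1-q^{Nf_i}}$, so passing from the chamber of $y$ to a reference chamber with $S=\emptyset$ multiplies the product by $(-1)^{|S|}\prod_{i\in S}q^{Nf_i}$; the sign $(-1)^{|S|}=\prod_i\epsilon(f_i,y)$ combines with $\epsilon(B,y)$ to give the chamber-independent $\epsilon(\det B)$, while the factor $\prod_{i\in S}q^{Nf_i}$ is absorbed by translating the system of representatives $R$ by $-\sum_{i\in S}Nf_i\in\Lambda'\subset{\goth a}$, which leaves the values $\phi(\rho)$ unchanged precisely because $\phi$ is constant modulo ${\goth a}$. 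Without this step (or an equivalent one) you have not produced the single rational expression $F(\phi,{\cal B},y)$ that the rest of the paper relies on.
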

\begin{proof}
Commen\c{c}ons par traiter le cas $F=\Q$ pour donner une id\'ee de ce qui se passe
dans le (ii).  On prend pour $B$ la base canonique $e_1$ de $\R$ (i.e.~$e_1=1\in\R$).
On a alors $C_B(y)=\R_{\geq 0}$ si $y>0$, et $C_B(y)=\R_{<0}$ si $y<0$, ainsi que
$\chi_B(-,y)={\bf 1}_{C_B(y)}$ si $y>0$ et $\chi_B(-,y)=-{\bf 1}_{C_B(y)}$ si $y<0$.

$\bullet$ Si $\phi={\bf 1}_\Z$, on obtient:
\begin{align*}
F(\phi,{\cal B},y)&=\sum\nolimits_{\alpha\geq 0}q^\alpha=\tfrac{1}{1-q}, \quad{\text{ si $y>0$,}}\\
F(\phi,{\cal B},y)&=-\sum\nolimits_{\alpha< 0}q^\alpha=\tfrac{-q^{-1}}{1-q^{-1}}=\tfrac{1}{1-q}, 
\quad{\text{ si $y<0$.}}
\end{align*}

$\bullet$ Plus g\'en\'eralement si $\phi$ est \`a support dans $\Z$ et p\'eriodique de p\'eriode $a$,
on obtient:
\begin{align*}
F(\phi,{\cal B},y)&=\sum\nolimits_{\alpha\geq 0}\phi(\alpha)q^\alpha=
\sum\nolimits_{\alpha=0}^{a-1}\phi(\alpha)\sum\nolimits_{n\geq 0}q^{\alpha+na}=
\sum\nolimits_{\alpha=0}^{a-1}\tfrac{\phi(\alpha){q^\alpha}}{1-q^a}, \quad{\text{ si $y>0$,}}\\
F(\phi,{\cal B},y)&=-\sum\nolimits_{\alpha< 0}\phi(\alpha)q^\alpha=
-\sum\nolimits_{\alpha=-a}^{-1}\phi(\alpha)\sum\nolimits_{n\geq 0}q^{\alpha-na}\\ &=
-\sum\nolimits_{\alpha=-a}^{-1}\tfrac{\phi(\alpha){q^\alpha}}{1-q^{-a}}
=\sum\nolimits_{\alpha=-a}^{-1}\tfrac{\phi(\alpha){q^{a+\alpha}}}{1-q^{a}}, \quad{\text{ si $y<0$.}}
\end{align*}
Mais $\phi$ \'etant p\'eriodique de p\'eriode $a$, on a
$\sum_{\alpha=-a}^{-1}{\phi(\alpha){q^{a+\alpha}}}=
\sum_{\alpha=0}^{a-1}{\phi(\alpha){q^{\alpha}}}$,
ce qui fait que l'expression est la m\^eme pour $y<0$ que pour $y>0$.

Apr\`es cet \'echauffement, passons au cas g\'en\'eral.
   Par lin\'earit\'e, on est ramen\'e \`a traiter le cas o\`u
${\cal B}=B$, avec $B=(f_1,\dots, f_n)$ et les
$f_i$ forment une base de $F$ sur~$\Q$.
Supposons $\phi$ constante modulo $\goth a$ et \`a support dans
$\goth b$ que l'on suppose contenir $f_1,\dots,f_n$,
et soit $a\in{\bf N}$ tel que $af_i\in\goth a$ pour tout
$1\leq i\leq r$.  

Posons $[0,1](+1)=[0,1[$ et $[0,1](-1)=]0,1]$.
Si $y\in{\bf R}^n$ est tel que ${\rm Tr}(f_iy)\neq0$ pour tout
$1\leq i\leq n$, soient $\epsilon_{y,i}\in\{\pm 1\}$ tels que
le c\^one $C_B(y)$ du \no\ref{SSS9} soit $\R(\epsilon_{y,1})f_1(y)+\cdots+\R(\epsilon_{y,n})f_n(y)$,
et soit 
$$X(a{B},y):=\{x\in{\bf R}^n\ |\ x=\sum\nolimits_{i=1}^n
x_iaf_i(y),\ {\rm avec}\ x_i\in[0,1](\epsilon_{y,i})\} $$
Tout \'el\'ement $x$ de $C_B(y)$ s'\'ecrit alors de mani\`ere unique sous la forme
$$x=x_0+\sum_{i=1}^n m_iaf_i(y),\quad{\text{avec $m_i\in{\bf N}$ et $x_0\in X(a{B},y)$.}}$$
On voit alors que, si ${\rm Tr}(f_iy)\neq0$ pour tout $1
\leq i\leq n$, la s\'erie
$\sum_{\alpha\in F}\phi(\alpha)\chi_{B}(\alpha,y)q^\alpha$
s'exprime en termes de s\'eries g\'eom\'etriques convergentes
et qu'elle vaut
$$\epsilon(B,y)\sum_{\alpha\in X(a{B},y)\cap{\goth b}}{{\phi(\alpha)
q^\alpha}\over
{\prod_{i\in I(y)}(q^{-af_i}-1)\prod_{i\notin I(y)}(1-q^{af_i})}}$$
o\`u
$I(y)$ d\'esigne l'ensemble des $i$ tels
que ${\rm Tr}(f_iy)<0$ (et donc $f_i(y)=-f_i$). 

Si $y_0\in {\bf R}^n$ v\'erifie
$I(y_0)=\emptyset$, alors $X(a{B},y)$ est le translat\'e de $X(a{B}, y_0)$ 
par $-\sum_{i\in I(y)}af_i$.  
On a donc
$$ X(a{B},y)\cap{\goth b}=\big\{\alpha-\sum\nolimits_{i\in I(y)}af_i,
\ \alpha \in X(a{B},y_0)\cap{\goth b}\big\}$$
Comme $\phi(\alpha+af_i)=\phi(\alpha)$ pour tout $1\leq i\leq n$,
on en d\'eduit que\footnote{Comme $I(y_0)=\emptyset$,
on a $\epsilon(B,y_0)=\epsilon(\det B)$.} 
\begin{equation}\label{F1}
\sum_{\alpha\in F}\phi(\alpha)\chi_{B}(\alpha,y)q^\alpha
=\epsilon(\det B)\sum_{\alpha\in X(a{B},y_0)\cap{\goth b}}{{\phi(\alpha)q^\alpha}\over
{\prod_{i=1}^n(1-q^{af_i})}}
\end{equation}
pour tout $y$ v\'erifiant ${\rm Tr}(f_iy)\neq0$ si $1\leq i\leq n$, ce qui
prouve le (ii).  
\end{proof}

\subsubsection{L'identit\'e fondamentale}\label{SSS15}
Soit
$$F^\dual(\phi,{\cal B},y)=F(\phi,{\cal B},y)-\phi(0)\chi_{\cal B}(0,y)$$
Posons $\beps(x)=(\epsilon_i(x))\in\{\pm1\}^n$, si $x\in\Tr$.

\begin{prop}\phantomsection\label{D15}
  Si $\phi\in{\cal S}(F,\psi,A)$,
et si ${\cal B}\in{\rm Shin}(F/V)$,
alors, pour presque tout $y\in\Tr$, 
$$\sum_{v\in V}\psi(v){\rm N}(v)F^\dual(\phi,{\cal B},vy)=
\epsilon({\rm N}(y))
\sum_{{\beta\in F^\dual }\atop{\beps(\beta)=\beps(y)}}
\phi(\beta)e^{-{\rm Tr}(\beta y)}.$$
\end{prop}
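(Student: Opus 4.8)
The plan is to expand each $F^\dual(\phi,{\cal B},vy)$ as an almost-everywhere convergent series over $F^\dual$, perform the change of variable $\beta=v\alpha$, and recognise the inner sum over $v$ as the Shintani identity~(\ref{somme}) defining ${\cal B}\in{\rm Shin}(\Tr/V)$. First I would use prop.~\ref{D1}(ii) together with the definition $F^\dual=F-\phi(0)\chi_{\cal B}(0,\cdot)$ to write, for every $v$ such that ${\rm Tr}(f\cdot vy)\neq0$ for all $f\in|{\cal B}|$,
\[
F^\dual(\phi,{\cal B},vy)=\sum_{\alpha\in F^\dual}\phi(\alpha)\,\chi_{\cal B}(\alpha,vy)\,e^{-{\rm Tr}(\alpha vy)},
\]
the term $\alpha=0$ that has been removed being exactly $\phi(0)\chi_{\cal B}(0,vy)$. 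Substituting this into the left-hand side turns it into a double sum over $(v,\alpha)\in V\times F^\dual$ weighted by $\psi(v){\rm N}(v)$.

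The core of the argument is then the interplay of the two equivariances. Lemma~\ref{signe} gives $\chi_{\cal B}(\alpha,vy)={\rm N}(v)\,\chi_{v{\cal B}}(v\alpha,y)$ (legitimate because $v_n>0$ and $|{\rm N}(v)|=1$), and membership $\phi\in{\cal S}(F,\psi,A)$ gives, after setting $\beta=v\alpha$, the relation $\phi(\alpha)=\phi(v^{-1}\beta)=\psi(v^{-1})\phi(\beta)=\psi(v)\phi(\beta)$, while ${\rm Tr}(\alpha vy)={\rm Tr}(\beta y)$. The prefactor $\psi(v){\rm N}(v)$ is tailored precisely so that these sign factors are absorbed: the $\psi(v)$ cancels the $\psi(v)$ coming from the transformation law of $\phi$, and the ${\rm N}(v)$ together with the norm factor of lemma~\ref{signe} leaves the weight prescribed by~(\ref{somme}). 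Hence, summing first over $v$ with $\beta$ fixed, the generic term becomes $\phi(\beta)\,e^{-{\rm Tr}(\beta y)}$ times $\chi_{v{\cal B}}(\beta,y)$ with the weight of~(\ref{somme}), and the inner sum collapses to $\chi_{(e_1,\dots,e_n)}(\beta,y)$. It then remains to evaluate this cone function: for $B=(e_1,\dots,e_n)$ one has $\epsilon(B,y)=\prod_i\epsilon(y_i)=\epsilon({\rm N}(y))$ and $C^0_B(y)$ is the open orthant $\{x:\beps(x)=\beps(y)\}$, so that $\chi_{(e_1,\dots,e_n)}(\beta,y)=\epsilon({\rm N}(y))\,{\bf 1}_{\beps(\beta)=\beps(y)}$ for almost every $\beta$; factoring out $\epsilon({\rm N}(y))$ produces exactly the right-hand side.

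The delicate point, and the only source of the restriction to almost every $y$, is the justification of the interchange of the $v$- and $\alpha$-summations and of the reindexing $\beta=v\alpha$. For a fixed generic $y$, lemma~\ref{R6} ensures that the series~(\ref{somme}) is locally finite in its first variable, so that for each $\beta\in F^\dual$ only finitely many $v\in V$ give $\chi_{v{\cal B}}(\beta,y)\neq0$; this makes the inner $v$-sum genuinely finite and licenses its replacement by $\chi_{(e_1,\dots,e_n)}(\beta,y)$. The outer sum over $\beta$ converges absolutely since $\phi$ is supported in a fractional ideal ${\goth b}$ and, on the orthant $\beps(\beta)=\beps(y)$, ${\rm Tr}(\beta y)=\sum_i\beta_iy_i$ tends to $+\infty$, forcing exponential decay of $e^{-{\rm Tr}(\beta y)}$. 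The set of exceptional $y$ — those where local finiteness or the estimates of lemmes~\ref{D20} et~\ref{R5} break down, or where some ${\rm Tr}(f\cdot vy)$ vanishes — has measure zero, and I expect the control of this Fubini-type exchange, rather than the formal sign bookkeeping, to be the main obstacle.
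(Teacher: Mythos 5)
Your formal computation is exactly the paper's: expand $F^\dual(\phi,{\cal B},vy)$ via la prop.\,\ref{D1}, absorb $\psi(v)$ and ${\rm N}(v)$ using $\phi(v\alpha)=\psi(v)\phi(\alpha)$ and le lemme~\ref{signe}, reindex $\beta=v\alpha$, collapse the $v$-sum by~(\ref{somme}), and identify $\chi_{(e_1,\dots,e_n)}(\beta,y)=\epsilon({\rm N}(y)){\bf 1}_{\beps(\beta)=\beps(y)}$. All of that is right, and you correctly single out the interchange of the $v$- and $\alpha$-summations as the real issue.

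But your justification of that interchange does not work as stated, and this is a genuine gap. You offer two ingredients: (a) for fixed generic $y$ and each $\beta$, only finitely many $v$ give $\chi_{v{\cal B}}(\beta,y)\neq0$; (b) the outer $\beta$-sum converges absolutely because, on the orthant $\beps(\beta)=\beps(y)$, ${\rm Tr}(\beta y)\to+\infty$. Neither suffices to rearrange the double series $\sum_v\sum_\alpha$ into $\sum_\beta\sum_v$. For (b), the restriction to the orthant is only the \emph{result} of the cancellation $\sum_v{\rm N}(v)\chi_{v{\cal B}}=\chi_{(e_1,\dots,e_n)}$; before collapsing, the individual terms $\chi_{v{\cal B}}(\beta,y)$ are supported merely in the half-space ${\rm Tr}(\beta y)\geq0$ (since $\chi_{vB}(\beta,y)\neq0$ forces ${\rm Tr}(vf_iy){\rm Tr}(v^{-1}f_i^\vee\beta)\geq0$ for all $i$, whose sum is ${\rm Tr}(\beta y)$), so there are contributing $\beta$ with $\Vert\beta\Vert$ large and ${\rm Tr}(\beta y)$ small, and $e^{-{\rm Tr}(\beta y)}$ provides no decay there. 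For (a), the number of contributing $v$ is finite for each $\beta$ but is not uniformly bounded as $\Vert\beta\Vert\to\infty$ (le lemme~\ref{R6} is a \emph{local} finiteness statement), so finiteness of each inner sum plus convergence of the collapsed outer sum does not give absolute convergence of the double series, which is what Fubini for series requires. The paper devotes le lemme~\ref{D16} to exactly this point: for almost every $y$ one discards the finitely many $v$ with some $|{\rm Tr}(f_i^\vee v^{-1}y)|\leq\Vert v^{-1/2}\Vert$ (lemme~\ref{D20}), and for the remaining $v$ the support condition yields the joint lower bound ${\rm Tr}(\beta y)\geq c(B)\Vert v^{-1/2}\Vert\,\Vert v\beta\Vert$, which makes $\sum_{v}\sum_{\beta}|\phi(\beta)\chi_{v{\cal B}}(\beta,y)|e^{-{\rm Tr}(\beta y)}$ converge. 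You need this (or an equivalent quantitative estimate) to license the step you identify as the main obstacle.
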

\begin{proof}
On obtient, en utilisant le lemme~\ref{D16} ci-dessous pour justifier
les interversions de sommes (on passe de la $1^{\text{\`ere}}$ \`a la $2^{\text{\`eme}}$ ligne
en utilisant les identit\'es $\phi(v\alpha)=\psi(v)\phi(\alpha)$
et $\chi_{v{\cal B}}(v\alpha,y)={\rm N}(v)\chi_{{\cal B}}(\alpha,y)$ (lemme~\ref{signe}),
et de la $3^{\text{\`eme}}$ \`a la $4^{\text{\`eme}}$ ligne par d\'efinition (formule~\ref{somme})),
\begin{align*}
\sum_{v\in V}\psi(v){\rm N}(v)F^\dual(\phi,{\cal B},vy)= &
\sum_{v\in V}\Bigl(\sum_{\alpha\in F^\dual }\phi(\alpha)\chi_{{\cal B}}(\alpha,vy)
e^{-{\rm Tr}(\alpha vy)}\psi(v){\rm N}(v)\Bigr)\\
= & \sum_{v\in V}\sum_{\alpha\in F^\dual }\phi(v\alpha)\chi_{v{\cal B}}(v\alpha,y)
e^{-{\rm Tr}(\alpha vy)}\\
= & \sum_{\beta\in F^\dual }\phi(\beta)
e^{-{\rm Tr}(\beta y)}
\sum_{v\in V}\chi_{v{\cal B}}(\beta,y)\\
= & \sum_{\beta\in F^\dual }\chi_{(e_1,\dots,e_n)}(\beta,y)\phi(\beta)
e^{-{\rm Tr}(\beta y)},
\end{align*}
d'o\`u le r\'esultat.
\end{proof}

\begin{lemm}\phantomsection\label{D16}
  Si ${\cal B}\in \Z[{\cal U}_0(F)]$, alors pour presque tout $y$, la s\'erie
$$\sum_{v\in V}\sum_{\alpha\in F^\dual }\phi(\alpha)\chi_{{\cal B}}(\alpha,vy)
e^{-{\rm Tr}(\alpha vy)}\psi(v){\rm N}(v)$$
est absolument convergente.
\end{lemm}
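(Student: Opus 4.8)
The plan is to reduce to a single basis, to turn the inner sum over $\alpha$ into an explicit geometric series, and then to sum the resulting bound over $v$ using the two convergence lemmas already established. By linearity in ${\cal B}=\sum n_BB$ it suffices to treat ${\cal B}=B=(f_1,\dots,f_n)\in{\cal U}_0(F)$. Fix $a\in{\bf N}$ with $af_i\in{\goth a}$ for all $i$. For a fixed $v$, and for $y$ with ${\rm Tr}(f_ivy)\neq0$ for all $i$, the factor $\chi_B(\alpha,vy)$ is nonzero only for $\alpha$ in the cone $C_B(vy)$, on which ${\rm Tr}(\alpha vy)\geq0$, so each inner sum already converges. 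Summing the geometric series exactly as in the proof of prop.\,\ref{D1} (formula~(\ref{F1})) and passing to absolute values gives, with $\lambda_i(v):=|{\rm Tr}(f_ivy)|$,
$$\Sigma(v):=\sum_{\alpha\in F^\dual}|\phi(\alpha)|\,|\chi_B(\alpha,vy)|\,e^{-{\rm Tr}(\alpha vy)}=\sum_{\alpha_0\in X(aB,vy)\cap{\goth b}}\frac{|\phi(\alpha_0)|\,e^{-{\rm Tr}(\alpha_0vy)}}{\prod_{i=1}^n(1-e^{-a\lambda_i(v)})},$$
where for the representative $\alpha_0=0$ the apex $\alpha=0$ is excluded from the corresponding sum. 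The set of representatives $\alpha_0$ is one of the $2^n$ finite sets attached to the sign pattern of $vy$, hence ranges over a fixed finite family independent of $v$; writing $\alpha_0=\sum_is_i\,af_i(vy)$ with $s_i\in[0,1)$, the $s_i$ take finitely many values and ${\rm Tr}(\alpha_0vy)=a\sum_is_i\lambda_i(v)$.

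The heart of the argument is a bound on $\Sigma(v)$ that is uniform in $v$. Each summand factors as $\prod_i\frac{e^{-as_i\lambda_i(v)}}{1-e^{-a\lambda_i(v)}}$, and I would use the elementary inequalities $\frac{e^{-as\lambda}}{1-e^{-a\lambda}}\leq C/(a\lambda)$ for $s>0$ and $\frac{1}{1-e^{-a\lambda}}\leq1+\frac{1}{a\lambda}$ for $s=0$. Expanding the product, and for the representative $\alpha_0=0$ discarding the constant coming from the removed apex, this produces a constant $C=C(B,\phi,{\goth a},{\goth b})$ with
$$\Sigma(v)\leq C\sum_{\emptyset\neq S\subseteq\{1,\dots,n\}}\ \prod_{i\in S}\frac{1}{\lambda_i(v)}.$$
Excluding the apex is exactly what kills the would-be $S=\emptyset$ term, which is the only term not summable over $v$.

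It then remains to show $\sum_{v\in V}\prod_{i\in S}\lambda_i(v)^{-1}<\infty$ for each nonempty $S$ and almost every $y$. Since ${\rm Tr}(f_ivy)={\rm Tr}\big(v\cdot(f_iy)\big)$ and multiplication by $f_i$ is a linear automorphism of ${\bf R}^n$ (as $f_i\in F^\dual$ has all its coordinates nonzero), lemme~\ref{D20}\,(i) applied to the vector $f_iy$ shows that, for almost every $y$, the set $\{v:\lambda_i(v)\leq\Vert v\Vert^{1/2}\}$ is finite; intersecting over the finitely many $i$ and $B$ preserves this for almost every $y$. For all but finitely many $v$ one then has $\prod_{i\in S}\lambda_i(v)^{-1}\leq\Vert v\Vert^{-|S|/2}\leq\Vert v\Vert^{-1/2}$, which is summable by lemme~\ref{D19.1}, while the finitely many exceptional $v$ each contribute the finite quantity $\Sigma(v)$; absolute convergence follows. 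The \emph{main obstacle} is the uniform cone estimate of the second step: one must verify that the representatives $\alpha_0$ genuinely form a single finite family as $v$ varies, so that the constant $C$ is uniform, and that removing the apex removes the constant term. Granting this, the summation over $v$ is a routine consequence of lemmes~\ref{D20} and~\ref{D19.1}.
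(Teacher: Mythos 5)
Your proof is correct, and it reaches the conclusion by a genuinely different central estimate than the paper's. You first collapse the inner sum over $\alpha$ into the closed form of the proof of the prop.\,\ref{D1} (formule~(\ref{F1})) and then bound the resulting expression in the $e^{-a\lambda_i(v)}$ by $C\sum_{S\neq\emptyset}\prod_{i\in S}\lambda_i(v)^{-1}$, correctly noting that excluding the apex $\alpha=0$ is exactly what removes the non-decaying constant term, and that the representatives $\alpha_0$ range over at most $2^n$ fixed finite sets, so the constant is uniform in $v$. The paper never sums the geometric series: after the substitution $\beta=v\alpha$ it bounds each individual term $e^{-{\rm Tr}(\beta y)}$ via the duality identity ${\rm Tr}(\beta y)=\sum_i{\rm Tr}(f_i^\vee v^{-1}y)\,{\rm Tr}(f_iv\beta)$, whose summands are all nonnegative on the support of $\chi_{v B}(\cdot,y)$; combined with the lemme~\ref{D20} applied to $f_i^\vee y$ this yields ${\rm Tr}(\beta y)\geq c(B)\Vert v^{-1}\Vert^{1/2}\Vert v\beta\Vert$, and the double sum is then dominated by $M\sum_{v}\sum_{\alpha\in{\goth b}\moins\{0\}}\exp\bigl(-c(B)\Vert v^{-1}\Vert^{1/2}\Vert \alpha\Vert\bigr)$ using $v{\goth b}={\goth b}$. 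Both routes discard finitely many exceptional $v$ for almost every $y$ via the lemme~\ref{D20}; yours then genuinely needs the lemme~\ref{D19.1} because your bound on the $v$-th block only decays like $\Vert v\Vert^{-1/2}$, whereas the paper's termwise exponential bound decays super-polynomially in $v$. What your version buys is that it directly estimates the summed quantity $\sum_{\alpha}|\phi(\alpha)\chi_{B}(\alpha,vy)|e^{-{\rm Tr}(\alpha vy)}$, i.e.\ essentially $|F^\dual(\phi,B,vy)|$, which is the object that actually occurs in the prop.\,\ref{D15}; the price is the uniform control of the cone representatives, which the paper's termwise argument avoids entirely.
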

\begin{proof}
On peut supposer ${\cal B}=B$, avec $B=(f_1,\dots,f_n)$.
Il r\'esulte du lemme~\ref{D20} (utilis\'e pour $f_i^\vee y$ au lieu de $y$)
que, pour presque tout $y$, l'ensemble des $v$
tels qu'il existe $i$ avec $|{\rm Tr}(f_i^\vee v^{-1} y)|\leq \Vert v^{-1}\Vert^{1\over2}=
\Vert v^{-\frac{1}{2}}\Vert$
est fini.  Quitte \`a remplacer quelques termes de la somme par $0$, on peut donc supposer
que $|{\rm Tr}(f_i^\vee v^{-1} y)|\geq \Vert v^{-\frac{1}{2}}\Vert$ pour tout $i$, si
le terme de la somme est non nul.

Comme 
$\chi_{v{\cal B}}(\beta,y)\neq 0\Rightarrow {\rm Tr}(f_i^\vee v^{-1}y)
{\rm Tr}(f_i v\beta)\geq 0$ pour tout $i\in\{1,\dots,n\}$, on a alors
${\rm Tr}(\beta y)\geq \Vert v^{-\frac{1}{2}}\Vert\sum_{i=1}^n|{\rm Tr} (f_iv\beta)|$,
et il existe donc $c(B)>0$ tel que ${\rm Tr}(\beta y)
\geq c(B)\Vert v^{-\frac{1}{2}}\Vert\,\Vert v\beta\Vert$
si $\chi_{v{\cal B}}(\beta,y) \neq 0$. 
Par ailleurs, $\phi$ 
ne prend qu'un nombre fini de valeurs et il existe ${\goth b}$ et $M>0$
tels que $\phi(\beta)=0$ si $\beta\notin{\goth b}$ et
$|\phi(\beta)|\leq M$ pour tout $\beta\in {\goth b}$.
On obtient donc
\begin{align*}
S&:=\sum_{v\in V}\sum_{\alpha\in F^\dual }|\phi(\alpha)\chi_{{\cal B}}(\alpha,vy)
e^{-{\rm Tr}(\alpha vy)}\psi(v){\rm N}(v)|\\ 
&= \sum_{v\in V}\sum_{\beta\in F^\dual }|\chi_{v{\cal B}}(\beta,y)\phi(\beta)
e^{-{\rm Tr}(\beta y)}| \\
& \leq M \sum_{v\in V}\sum_{\beta\in{\goth b}\moins\{0\}}
\hskip-3mm\exp\bigl(-c(B)\Vert v^{-\frac{1}{2}} \Vert\Vert v\beta\Vert\bigr)
= M \sum_{v\in V}\sum_{\alpha\in{\goth b}\moins\{0\}}
\hskip-3mm\exp\bigl(-c(B)\Vert v^{-\frac{1}{2}} \Vert\Vert \alpha\Vert\bigr)
\end{align*}
et il n'est pas difficile de prouver que la derni\`ere s\'erie double converge
(par exemple en remarquant que $\Vert v^{-\frac{1}{2}} \Vert$ et $\Vert \alpha\Vert$ sont
minor\'ees par $c>0$ et donc que $\Vert v^{-\frac{1}{2}} \Vert\Vert \alpha\Vert\geq
\frac{c}{2}(\Vert v^{-\frac{1}{2}} \Vert+\Vert \alpha\Vert)$).
\end{proof}

\Subsection{${\cal B}$-r\'egularit\'e}
%Si $\alpha\in F$, posons
% $$ \zeta^\alpha:=e^{2i\pi\,{\rm Tr}(\alpha)}$$
%(et donc $\zeta^\alpha$ est une racine de l'unit\'e).
%On a $\zeta^\alpha \zeta^\beta=\zeta^{\alpha+\beta}$,
%si $\alpha,\beta\in F$.
%Soit 
%$$ \Lambda_{F,A}^0:=A[q^\alpha,\tfrac{1-q^\alpha}{1-q^{n\alpha}},\,\alpha\in F,\,n\in\N]$$
%On a $\Lambda_{F,A}^0\subset \Lambda_{F,A}$ et $\Lambda_{F,A}^0$ est aussi
%l'intersection de $\Lambda_{F,A}$ avec la sous-${\bf C}[q^F]$-alg\`ebre
%de $\Lambda_{F,{\bf C}}$ engendr\'ee par les ${1\over{1-\zeta q^\alpha}}$
%pour $\alpha\in F^\dual$ et $\zeta\neq 1$ une racine de l'unit\'e.
%
%\vskip2mm
On dit que $\phi\in{\cal S}(F,A)$ est {\it ${\cal B}$-r\'eguli\`ere} si $F(\phi,{\cal B},y)$ est ${\cal C}^\infty$
sur $\R^n$.
% (cela \'equivaut \`a $F(\phi,{\cal B},y)\in\Lambda^0_{F,A}$).
Nous donnons ci-dessous des mani\`eres de modifier minimalement $\phi$ pour la rendre ${\cal B}$-r\'eguli\`ere.
\begin{prop}\phantomsection\label{smoo}
Si $\phi$ est ${\cal B}$-r\'eguli\`ere,
on peut \'ecrire $F(\phi,{\cal B},y)$ sous la forme
$$F(\phi,{\cal B},y)=\sum_{B,\zeta}\frac{\alpha_{B,\zeta}}
{(1-\zeta_{1}q^{f_{1}})\cdots(1-\zeta_{r}q^{f_{r}})}$$
o\`u: 

$\bullet$ $r\leq n$ d\'epend de $(B,\zeta)=\{(f_i,\zeta_i),\,1\leq i\leq r\}$, 

$\bullet$ 
$f_{1},\dots,f_{r}$ sont des \'el\'ements de $F$ lin\'eairement ind\'ependants sur $\Q$,

$\bullet$ $\zeta_1,\dots,\zeta_r$ sont des racines de l'unit\'e~$\neq 1$,

$\bullet$ $\alpha_{B,\zeta}\in \C$. 
\end{prop}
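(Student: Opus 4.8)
The plan is to start from the explicit shape of $F(\phi,{\cal B},y)$ produced in the proof of prop.\,\ref{D1}, reduce each elementary block to a product of one–variable rational fractions, apply a partial fraction decomposition over roots of unity, and finally use the ${\cal C}^\infty$ hypothesis to eliminate the forbidden factors $\frac{1}{1-q^{f}}$ (i.e. the value $\zeta=1$). By linearity it suffices, by formula~(\ref{F1}), to treat one block
$$T=\frac{q^\alpha}{\prod_{i=1}^n(1-q^{af_i})},$$
where $(f_1,\dots,f_n)$ is a $\Q$–basis of $F$, $a\in{\bf N}$, and $\alpha=\sum_{i=1}^n t_i\,af_i\in X(aB,y_0)$ so that $t_i\in[0,1)\cap\Q$. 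First I would choose an integer $N$ clearing the denominators of all the $t_i$ and set $g_i=af_i/N\in F$; then $(g_1,\dots,g_n)$ is again a $\Q$–basis, and with $w_i=q^{g_i}$ one has $q^{af_i}=w_i^{N}$ and $q^\alpha=\prod_i w_i^{m_i}$, $m_i=Nt_i\in\{0,\dots,N-1\}$, so that $T=\prod_{i=1}^n\frac{w_i^{m_i}}{1-w_i^{N}}$. Since the forms $y\mapsto{\rm Tr}(g_iy)$ are independent, $y\mapsto({\rm Tr}(g_iy))_i$ is a linear isomorphism and $T$ is genuinely a product of one–variable fractions.

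Next I apply to each factor the partial fraction identity, valid because $0\le m_i<N$,
$$\frac{w^{m}}{1-w^{N}}=\frac1N\sum_{\zeta^N=1}\frac{\zeta^{m}}{1-\zeta^{-1}w},$$
and expand the product over the choices of an $N$–th root of unity in each coordinate. This writes $F(\phi,{\cal B},y)$ as a finite $\C$–linear combination of terms $\frac{c}{\prod_{i=1}^n(1-\zeta_i q^{g_i})}$ with $\zeta_i^N=1$ and the $g_i$ independent over $\Q$. Call such a term \emph{bon} if all $\zeta_i\neq1$ and \emph{mauvais} sinon. A bon terme has exactly the shape required, and since $e^{-{\rm Tr}(gy)}>0$, the equation $\zeta e^{-{\rm Tr}(gy)}=1$ forces $\zeta=1$; hence each bon terme is real–analytic on $\R^n$. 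Thus the sum $F_{\rm bon}$ of the bons termes is ${\cal C}^\infty$, and so is $F_{\rm mauvais}:=F(\phi,{\cal B},y)-F_{\rm bon}$ by the ${\cal B}$–régularité hypothesis.

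It remains to show that $F_{\rm mauvais}$, a combination of mauvais termes, already lies in the $\C$–span of bons termes; this is the heart of the argument. I would induct on the maximal number $b$ of factors $1-q^{h}$ (the case $\zeta=1$) occurring in a single mauvais terme, $b=0$ being the conclusion. Group the depth–$b$ mauvais termes according to the codimension–$b$ subspace $L$ that their singular hyperplanes cut out. Near a generic point of such an $L$, replacing each $(1-q^{h})^{-1}$ by its polar part ${\rm Tr}(hy)^{-1}$ and every bon or non–maximal factor by its (smooth, nonvanishing) value, the order–$b$ singularity of $F_{\rm mauvais}$ along $L$ becomes a finite combination $\sum(\text{coeff. lisse})\cdot\prod_{i=1}^{b}{\rm Tr}(h_iy)^{-1}$, a transverse product of simple poles. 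The ${\cal C}^\infty$ hypothesis forces this leading singular part to vanish identically, which is a local, lower–dimensional instance of the statement itself (all factors mauvais, living transversally to $L$); resolving it rewrites the depth–$b$ termes modulo bons termes and termes of depth $\le b-1$, and the induction closes.

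The main obstacle is precisely this last step: converting the vanishing of the leading singular part into an honest algebraic rewriting inside the algebra generated by the $\frac{1}{1-\zeta q^{f}}$. The delicate points are the bookkeeping of residues along each stratum, the fact that two factors $1-q^{h}$ and $1-q^{h'}$ with $h'\in\Q^\times h$ share a hyperplane but contribute different residues, and the need to carry out the reduction coherently over all deepest strata simultaneously. This is where the combinatorics of the $n$–cocycle $B\mapsto\chi_B$ (lemme~\ref{chi1}) and the mechanism of apparent singularities (in the spirit of lemme~\ref{Shi5}) should enter, exactly as in the complex case.
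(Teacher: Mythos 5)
Your first half is exactly the paper's first step: starting from~(\ref{F1}), rescaling the exponents and applying a one--variable partial fraction decomposition over roots of unity to write $F(\phi,{\cal B},y)$ as a finite combination of terms $\frac{c}{\prod_i(1-\zeta_iq^{g_i})}$ with the $g_i$ independent over $\Q$, and the observation that the terms with all $\zeta_i\neq1$ are automatically smooth is correct. But the whole content of the proposition is the elimination of the factors with $\zeta_i=1$, and there your argument is a sketch that does not close --- as you yourself say. The gap is real: the vanishing of the leading polar part of $F_{\rm mauvais}$ along each codimension-$b$ stratum is a system of linear conditions on the coefficients, and it does not by itself yield an identity in $\Lambda_{F,\C}$ rewriting the offending terms. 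Worse, your induction variable (the number of factors $1-q^{h}$ in a term) is not decreased by the natural rewriting identities: the relations that actually realize the cancellations, such as $\frac{1}{1-q^{g}}=\frac12\frac{1}{1-q^{g/2}}+\frac12\frac{1}{1+q^{g/2}}$, preserve the depth while changing the exponent, and two terms whose bad exponents are proportional but unequal ($h$ and $ch$ with $c\in\Q^{\times}$) determine the same hyperplane yet contribute different residues, so one is forced to pass to common refinements whose effect on your induction is uncontrolled. Invoking the cocycle property of $B\mapsto\chi_B$ does not help here; it is not what the paper uses at this point.

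The missing idea is the paper's linear-form trick. One chooses $\Q$-linear forms $e_1^\dual,\dots,e_n^\dual$ integral and nonvanishing on the (finite) support, sets $\lambda=M_1e_1^\dual+\cdots+M_ne_n^\dual$ with $M_1\ll\cdots\ll M_n$, and normalizes (replacing $\frac{1}{1-\zeta q^{g}}$ by $1-\frac{1}{1-\zeta^{-1}q^{-g}}$ where needed) so that $\lambda>0$ on the support. Picking $f$ with $\lambda(f)=1$ in the $\Q$-span $W$ of the bad exponents and substituting $X=q^{f}$, a partial fraction decomposition \emph{in the single variable $X$} rewrites each term; the hypothesis that the poles of $F(\phi,{\cal B},y)$ are only apparent kills precisely the resulting terms with $\xi_i=1$, and an arithmetic lemma on the orders of the roots of unity $\xi_i$ (this is where $M_1\ll\cdots\ll M_n$ enters) guarantees that the new roots of unity $\xi_j\xi_i^{-1}$ can equal $1$ only when $\zeta_i=\zeta_j=1$. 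Consequently the span of the bad exponents of the new expression is contained in $W\cap{\rm Ker}\,\lambda$, so $\dim W$ strictly decreases and the process stops after at most $n$ iterations. This is the decreasing quantity your induction lacks, and without it (or a substitute of comparable precision) the proof is incomplete.
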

\begin{proof}
On part de la formule~(\ref{F1}).  Si $\alpha\in X(aB,y_0)\cap{\goth b}$, on peut \'ecrire
$\alpha$ sous la forme $\alpha=\sum_{i=1}^na_i(af_i)$ avec $a_i\in\Q\cap[0,1[$. Si $a_i=\frac{b_i}{d_i}$,
et si $f'_i=\frac{a}{d_i}f_i$, on a alors
$\frac{q^\alpha}{\prod_{i=1}^n(1-q^{af_i})}=\prod_{i=1}^n\frac{q^{b_i}{f'_i}}{1-q^{d_if'_i}}$
avec $b_i<d_i$ entiers.  Une d\'ecomposition en \'el\'ements simples
permet d'\'ecrire $\frac{q^{b_i}{f'_i}}{1-q^{d_if'_i}}=\sum_{\zeta^{d_i}=1}\frac{\lambda_\zeta}{1-\zeta q^{f'_i}}$.
On en d\'eduit que $F(\phi,{\cal B},y)$ est une combinaison lin\'eaire de termes du type
$\prod_{i=1}^n\frac{1}{(1-\zeta_i q^{g_i})}$ o\`u les $\zeta_i$ sont des racines de l'unit\'e
et les $g_i$ des \'el\'ements de $F$ formant une base de $F$ sur $\Q$.

%Si $\Lambda$ est le r\'eseau de $F$ engendr\'e par les $g_i$ apparaissant dans la combinaison lin\'eaire,
%on peut supposer que $g_i$ est primitif (si $g=d h$, on peut \'ecrire
%$\frac{1}{1-\zeta q^{dh}}$ sous la forme $\sum_{\xi^d=\zeta}\frac{\lambda_\xi}{1-\xi q^h}$).
%Cela permet de supposer que, si $g,g'$ sont dans le support de la combinaison lin\'eaire et
%sont colin\'eaires sur $\Q$, alors $g=\pm g'$. Comme $\frac{1}{1-\zeta q^{-g}}=1-\frac{1}{1-\zeta^{-1}q^g}$,
%on peut supposer qu'en fait $g=g'$ si $g$ et $g'$ sont colin\'eaires sur $\Q$.

Si $B=(f_1,\dots,f_r)$ et $\zeta=(\zeta_1,\dots,\zeta_r)$, 
soit $G_{B,\zeta}=\prod_{i=1}^r\frac{1}{1-\zeta_iq^{f_i}}$.
Si $G=\sum_{B,\zeta}\alpha_{B,\zeta} G_{B,\zeta}$, on note $S(G)$ le support de $G$, i.e~l'ensemble
des $f\in F^\dual$ tel qu'il existe $(B,\zeta)$ avec $\alpha_{B,\zeta}\neq 0$ et $f\in B$.
On note aussi $S_\infty(G)$ l'ensemble des $f$ tels que
${\rm Tr}\,fy=0$ soit un p\^ole apparent de $G$, i.e.~l'ensemble
des $f\in F^\dual$ tel qu'il existe $(B,\zeta)=\{(f_i,\zeta_i),\ 1\leq i\leq r\}$ 
avec $\alpha_{B,\zeta}\neq 0$, et $i\in\{1,\dots,r\}$
avec $f=f_i$ et $\zeta_i=1$
(i.e.~un des termes du d\'enominateurs de $G_{B,\zeta}$ est $1-q^f$).

La discussion ci-dessus fournit une \'ecriture de $F(\phi,{\cal B},y)$ sous la forme
$F(\phi,{\cal B},y)=G^{(0)}$ avec $G^{(0)}=\sum_{B,\zeta}\alpha_{B,\zeta} G_{B,\zeta}$.
Choisissons des $\Q$-formes lin\'eaires $e_1^\dual,\dots,e_n^\dual$ sur $F$, qui prennent
des valeurs dans $\Z\moins\{0\}$ en
les \'el\'ements de $S(G^{(0)})$, et qui sont lin\'eairement ind\'ependantes. 

Soit $\lambda=M_1e_1^\dual+\cdots+M_ne_n^\dual$, o\`u $M_1\ll M_2\cdots\ll M_n$ sont des entiers~$\geq 1$.
Quitte \`a changer $\frac{1}{1-\zeta_iq^{f_i}}$ en $1-\frac{1}{1-\zeta_i^{-1}q^{-f_i}}$ et 
remplacer $G^{(0)}$ par l'expression obtenue en d\'eveloppant les produits,
on peut supposer que $\lambda(f)>0$ pour tout $f\in S(G^{(0)})$ (et donc $\lambda(f)$ est un entier~$\geq 1$).

Si $(B,\zeta)=\{(f_i,\zeta_i)\}$, et si $\xi_i^{\lambda(f_i)}=\zeta_i$,
on ne peut avoir
$\xi_i=\xi_j$ que si $\zeta_i=\zeta_j=1$. En effet, si $\zeta_i\neq 1$ est d'ordre $N_i$ et si
$\zeta_j$ est d'ordre $N_j$, alors $\xi_i$ est d'ordre $N_i\lambda(f_i)$ tandis que l'ordre
de $\xi_j$ divise $N_j\lambda(f_j)$ (et lui est \'egal si $N_j\neq 1$).  Il s'ensuit que,
si $\xi_i=\xi_j$, alors $N_i\lambda(f_i)$ divise $N_j\lambda(f_j)$.
Le
pgcd $(\lambda(f_i),\lambda(f_j))$ divise $e_n^\dual(f_{j})\lambda(f_{i})-
e_n^\dual(f_{i})\lambda(f_{j})=\sum_{k=1}^{n-1}
(e_n^\dual(f_{j})e_k^\dual(f_{i})-e_n^\dual(f_{i})e_k^\dual(f_{j}))M_k$ qui est non nul
car $f_i$ et $f_j$ sont non colin\'eaires et les $M_i$ ont \'et\'e choisis d'ordres de grandeur diff\'erents;
on en d\'eduit que $(\lambda(f_i),\lambda(f_j))$ est tr\`es petit par rapport \`a
$\lambda(f_i),\lambda(f_j)$ (qui sont d'ordre $M_n$).  On aboutit \`a une contradiction
car $\frac{\lambda(f_i)}{(\lambda(f_{i}),\lambda(f_{j}))}$ divise $N_j$, ce qui n'est pas possible puisque
le membre de gauche est beaucoup plus gros que le membre de droite.

Soit $W(G^{(0)})$ le sous-$\Q$-espace de $F$ engendr\'e par $S_\infty(G^{(0)})$.
Comme $e_n^\dual$ ne s'annule pas sur $S(G^{(0)})$, il en est de m\^eme de $\lambda$; en particulier,
$\lambda$ n'est pas identiquement nulle sur $W(G^{(0)})$; on choisit $f\in W(G^{(0)})$
avec $\lambda(f)=1$.  On peut alors \'ecrire tout $v\in S(G^{(0)})$ sous la forme $\lambda(v)f+v'$
avec $\lambda(v')=0$.
Une d\'ecomposition en \'el\'ements simples (en posant $X=q^f$, et donc $q^{f_i}=q^{f'_i}X^{\lambda(f_i)}$) 
fournit une identit\'e
$$\prod_{i=1}^r\frac{\lambda(f_i)}{1-\zeta_i q^{f_i}}=
\sum_{{\xi_k^{\lambda(f_k)}=\zeta_k}\atop{1\leq k\leq r}}
\sum_{i=1}^r\frac{1}{1-\xi_iq^{f'_i/\lambda(f_i)}q^f}
\prod_{j\neq i}\frac{1}{1-\xi_j\xi_i^{-1}q^{f'_j/\lambda(f_j)-f'_i/\lambda(f_i)}}$$
Notons que les exposants $f+\frac{f'_i}{\lambda(f_i)}=\frac{f_i}{\lambda(f_i)}$
et $\frac{f'_j}{\lambda(f_j)}-\frac{f'_i}{\lambda(f_i)}=\frac{f_j}{\lambda(f_j)}-\frac{f_i}{\lambda(f_i)}$
forment encore une famille libre sur $\Q$.

Comme les p\^oles de $F(\phi,{\cal B},y)$ ne sont qu'apparents, la somme (sur $(B,\zeta)$)
des termes faisant intervenir $\frac{1}{1-q^{f'_i/\lambda(f_i)}}q^f$ (i.e.~$\xi_i=0$) est identiquement nulle.
Si on \'elimine ces termes, on obtient une nouvelle expression $G^{(1)}$ pour $F(\phi,{\cal B},y)$
pour laquelle $S_\infty(G^{(1)})$ est inclus dans
$W(G^{(0)})\cap{\rm Ker}\,\lambda$ (en effet, d'apr\`es la discussion ci-dessus, $\xi_j\xi_i^{-1}$ ne peut
\^etre \'egal \`a $1$ que si $\zeta_i=\zeta_j=1$, et donc $f_i,f_j\in S_\infty(G^{(0)})\subset W(G^{(0)})$,
et $\frac{f'_j}{\lambda(f_j)}-\frac{f'_i}{\lambda(f_i)}=\frac{f_j}{\lambda(f_j)}-\frac{f_i}{\lambda(f_i)}
\in W(G^{(0)})$,
et par ailleurs $\frac{f'_j}{\lambda(f_j)}-\frac{f'_i}{\lambda(f_i)}\in{\rm Ker}\,\lambda$).
On a donc $W(G^{(1)})\subset W(G^{(0)})\cap{\rm Ker}\,\lambda$, et
apr\`es un nombre fini~$m\leq n$
d'it\'erations, on obtient $W(G^{(m)})=0$, ce qui signifie que 
$G^{(m)}$ est une \'ecriture de $F(\phi,{\cal B},y)$ sous la forme voulue.
\end{proof}

\vskip2mm
Soit $\phi\in {\cal S}(F,A)$ \`a support dans ${\goth b}$ et constante modulo~${\goth a}$ (quitte
\`a diminuer ${\goth a}$, on peut supposer que ${\goth a}\subset \O_F$, ce que nous ferons).
Soit ${\goth q}$ est un id\'eal premier non ramifi\'e de $\O_F$, de degr\'e~$1$ (i.e.~$\O_F/{\goth q}\cong
\Z/p$ o\`u $p={\rm N}{\goth q}$), et ne contenant pas ${\goth a}$. On d\'efinit
$\phi_{\goth q}$ par 
$$\phi_{\goth q}(\alpha)
=(1-{\rm N}{\goth q}{\bf 1}_{{\goth b}{\goth q}}(\alpha))\phi(\alpha)$$
Le lemme qui suit est adapt\'e de~\cite[lemme\,2b]{CN2} et de ses corollaires.
\begin{lemm}\phantomsection\label{smoo1}
Si $v_{\goth q}(f)=0$ pour tout $f\in|{\cal B}|$,
alors
$\phi_{\goth q}$ est ${\cal B}$-r\'eguli\`ere.
\end{lemm}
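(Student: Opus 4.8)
The plan is to reduce $\mathcal{B}$-régularité to the vanishing of the residue of $F(\phi_{\goth q},{\cal B},y)$ along each candidate pole hyperplane, and then to check this vanishing by an elementary summation identity, which is exactly the content of \cite[lemme\,2b]{CN2}. By linearity I would treat each base $B=(f_1,\dots,f_n)$ of ${\cal B}$ separately, since regularity of each $F(\phi_{\goth q},B,y)$ implies that of the sum $F(\phi_{\goth q},{\cal B},y)=\sum_B n_B F(\phi_{\goth q},B,y)$. Starting from the explicit expression~(\ref{F1}), $F(\phi_{\goth q},B,y)=\epsilon(\det B)\sum_{\alpha\in X(aB,y_0)\cap{\goth b}}\frac{\phi_{\goth q}(\alpha)q^\alpha}{\prod_{i=1}^n(1-q^{af_i})}$, one reads off that the only possible singularities are simple poles along the hyperplanes ${\rm Tr}(f_iy)=0$, $f_i\in|B|$. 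As the numerator is a finite exponential sum and the denominator is the product of the $(1-q^{af_i})$, the vanishing of every simple residue forces each factor $1-q^{af_i}$ to divide the numerator, so $F(\phi_{\goth q},{\cal B},y)$ extends to an entire, hence ${\cal C}^\infty$, function of $y$; thus it suffices to kill each residue.

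To compute the residue along a fixed ${\rm Tr}(f_ky)=0$, I would pass to the coordinates $t_i=a\,{\rm Tr}(f_iy)$ (a linear change since $B$ is a basis), so that $q^{af_i}=e^{-t_i}$ and $q^\alpha=\prod_j e^{-c_{\alpha,j}t_j/a}$, where the $c_{\alpha,j}$ are the coordinates of $\alpha$ in $B$. Since $\tfrac{1}{1-e^{-t_k}}\sim t_k^{-1}$ has residue $1$ at $t_k=0$, extracting the coefficient of $t_k^{-1}$ yields a residue proportional to $\sum_{\alpha}\phi_{\goth q}(\alpha)\,\prod_{j\ne k}e^{-c_{\alpha,j}t_j/a}\big/\prod_{i\ne k}(1-e^{-t_i})$. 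Grouping the $\alpha\in X(aB,y_0)\cap{\goth b}$ according to their transverse coordinates $(c_{\alpha,j})_{j\ne k}$ and invoking the linear independence of the characters $\prod_{j\ne k}e^{-c_jt_j/a}$, the residue vanishes identically if and only if, for every transverse class, the sum of $\phi_{\goth q}$ over the corresponding segment of the $f_k$-line (one full period modulo $af_k$) is zero.

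This last point is the arithmetic heart. Writing $\phi_{\goth q}=\phi-{\rm N}{\goth q}\,{\bf 1}_{{\goth b}{\goth q}}\phi$, the period-sum of $\phi_{\goth q}$ vanishes precisely when the sum of $\phi$ over the points of the period lying in ${\goth b}{\goth q}$ equals $\tfrac{1}{{\rm N}{\goth q}}$ times the sum of $\phi$ over the whole period. Here the three hypotheses enter: since ${\goth q}$ is unramified of degree one, ${\goth b}/{\goth b}{\goth q}\cong\O_F/{\goth q}\cong\Z/p$ with $p={\rm N}{\goth q}$; since $v_{\goth q}(f_k)=0$, the generator $f_k$ is a unit at ${\goth q}$, so stepping along the $f_k$-line by the generator of the period lattice runs cyclically through the $p$ classes of ${\goth b}/{\goth b}{\goth q}$, exactly one in $p$ consecutive points lying in ${\goth b}{\goth q}$; and since ${\goth q}\nmid{\goth a}$ (that is $v_{\goth q}({\goth a})=0$) while $\phi$ is constant modulo ${\goth a}$, multiplication by $p$ is invertible on the finite period group, and the induced reshuffling of the ${\goth b}{\goth q}$-points over the full period makes the $\phi$-weighted ${\goth b}{\goth q}$-sum exactly the announced $1/p$ fraction of the total. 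This is the computation of \cite[lemme\,2b]{CN2}; the model case $F=\Q$ already appears in the proof of prop.\,\ref{D1}, where $\phi_{\goth q}$ has period $pa$ and $\sum_{\alpha=0}^{pa-1}\phi_{\goth q}(\alpha)=0$ precisely because $p$ is invertible modulo $a$.

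The step I expect to be the main obstacle is this equidistribution/reshuffling: one must check that along each $f_k$-line the generator of the period lattice is a genuine ${\goth q}$-unit step (so that it is \emph{not} already in ${\goth b}{\goth q}$, which would ruin the count) and that $p$ acts bijectively on the period. Both are consequences of the compatibility of the three hypotheses, and the cleanest way to secure them is to argue ${\goth q}$-adically after localizing at ${\goth q}$, where $f_k$ becomes a unit, ${\goth a}$ becomes the unit ideal, and everything reduces to the one-dimensional identity over $\Z/p$ used for $F=\Q$. Once the period-sums of $\phi_{\goth q}$ are shown to vanish, the residue along every ${\rm Tr}(fy)=0$ is identically zero, and the divisibility argument of the first paragraph delivers the ${\cal C}^\infty$ regularity of $F(\phi_{\goth q},{\cal B},y)$, i.e. the ${\cal B}$-régularité of $\phi_{\goth q}$.
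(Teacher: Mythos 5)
Your proof is correct in outline but follows a genuinely different route from the paper's. The paper computes no residues at all: it expands the factor ${\bf 1}_{\goth b}-{\rm N}{\goth q}\,{\bf 1}_{{\goth b}{\goth q}}$ into additive characters,
$({\bf 1}_{\goth b}-{\rm N}{\goth q}{\bf 1}_{{\goth b}{\goth q}})(\alpha)=-\sum_{\beta}\zeta^{\beta\alpha}{\bf 1}_{\goth b}(\alpha)$ with $\beta$ running over the nonzero classes of ${\goth q}^{-1}{\goth b}^\vee/{\goth b}^\vee$, and reruns the computation leading to~(\ref{F1}); the geometric series in the direction $af_i$ then has ratio $\zeta^{af_i\beta}q^{af_i}$, and the hypotheses ($v_{\goth q}(f_i)=0$, ${\goth q}\nmid{\goth a}$, ${\goth q}$ non ramifi\'e de degr\'e $1$) force $\zeta^{af_i\beta}$ to be a primitive $p$-th root of unity, so the resulting expression~(\ref{F5}) is manifestly pole-free and smoothness is immediate. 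Your residue-vanishing argument is precisely the Fourier-dual statement: the vanishing of the period sums of $\phi_{\goth q}$ along each $f_k$-line is equivalent to the non-triviality of the characters $\zeta^{af_i\beta}$, so your strategy is sound. Two caveats. First, you cannot invoke~(\ref{F1}) for $\phi_{\goth q}$ with the same $a$: $\phi_{\goth q}$ is only constant modulo ${\goth a}\cap{\goth b}{\goth q}$, so the fundamental domain must be $X(paB,y_0)$ and the denominators $1-q^{paf_i}$ (you correctly use the period $pa$ in the $F=\Q$ warm-up, but then speak of \emph{one full period modulo $af_k$}); with $a'=pa$, and under the normalizations $p\nmid a$ and $v_{\goth q}({\goth b})=0$ that the paper's computation also uses implicitly, your equidistribution count does go through. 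Second, the paper's route buys more than smoothness: the explicit formula~(\ref{F5}) is reused in the proof of th.~\ref{heck12} to control the denominators (powers of $\zeta_p-1$) and obtain the $2^{n-1}$-integrality, information that a pure residue-vanishing argument does not provide.
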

\begin{proof}
Comme dans la preuve de la prop.~\ref{D1}, on se ram\`ene au cas ${\cal B}=B$ avec $B=(f_1,\dots,f_n)$.

Si ${\goth c}$ est un id\'eal fractionnaire de $F$, on note ${\goth c}^\vee$ son
dual, i.e. ${\goth c}^\vee={\goth d}_F^{-1}{\goth c}^{-1}$.
On a 
$$({\bf 1}_{\goth b}-{\rm N}{\goth q}{\bf 1}_{{\goth b}{\goth q}})(\alpha)=-
\hskip-5mm \sum_{{\beta\in{\goth q}^{-1}{\goth b}^\vee/{\goth b}^\vee}\atop{\beta\neq 0}}
\zeta^{\beta\alpha}{\bf 1}_{\goth b}(\alpha)$$
En reprenant les calculs menant \`a la formule~(\ref{F1}), on obtient
\begin{equation}\label{F5}
F(\phi_{\goth q},{\cal B},y)=-\epsilon(\det B)\hskip-5mm
\sum_{{\beta\in{\goth q}^{-1}{\goth b}^\vee/{\goth b}^\vee}\atop{\beta\neq 0}}
\sum_{\alpha\in X(aB,y_0)\cap{\goth b}}\frac{\phi(\alpha)\zeta^{\beta\alpha}q^\alpha}
{\prod_{i=1}^n(1-\zeta^{af_i\beta}q^{af_i})}
\end{equation}
On en d\'eduit le r\'esultat car les hypoth\`eses sur ${\goth q}$ font que
$\zeta^{af_i\beta}\neq 1$ (c'est une racine primitive $p$-i\`eme de l'unit\'e) pour tout $i$,
si $\beta\in {\goth q}^{-1}{\goth b}^\vee\moins{\goth b}^\vee$.
\end{proof}

\begin{rema}\phantomsection\label{smoo2}
Il existe une infinit\'e de ${\goth q}$ v\'erifiant les conditions
du lemme~\ref{smoo1}
qui sont en plus principaux et admettent un g\'en\'erateur $\varpi$ totalement
positif congru \`a $1$ modulo~${\goth a}$. 
Pour un tel ${\goth q}$, on a
$$\phi_{\goth q}(\alpha)=\phi(\alpha)-N(\varpi)\phi(\frac{\alpha}{\varpi})$$
% et
%$$L(\phi_{\goth q},\psi,s)=(1-N(\varpi)^{1-s})L(\phi,\psi,s)$$
%ce qui permet de d\'eduire les propri\'et\'es de $L(\phi,\psi,s)$ 
%pour $\phi$ g\'en\'erale, de celles dans le cas o\`u $\phi$ est ${\cal B}$-r\'eguli\`ere.
\end{rema}

\Subsection{Fonctions \`a d\'ecroissance suffisante}\label{SSS16}
\begin{defi} Une fonction $F(y)$ est {\it \`a d\'ecroissance
suffisante \`a l'infini}, si on peut trouver des formes lin\'eaires
$L_1,\dots,L_r$ en $y_1,\dots,y_n$ dont {\it aucun coefficient n'est~$0$}, et
des fonctions $F_1,\dots,F_r$ telles que $(1+|L_i(y)|)F_i(y)$ soit
born\'ee sur ${\bf R}^n$ et $F(y)=\sum_{i=1}^rF_i(y)$.
\end{defi}
\begin{lemm}\phantomsection\label{D11}
{\rm (i)}
Toute combinaison lin\'eaire de fonctions \`a d\'ecroissance
suffisante \`a l'infini l'est aussi.

{\rm (ii)} Si $F$ est \`a d\'ecroissance
suffisante \`a l'infini et $G$ est born\'ee sur ${\bf R}^n$, alors $FG$
est \`a d\'ecroissance suffisante \`a l'infini.

{\rm (iii)}  Si $F_1,\dots,F_r$ et $G_1,\dots,G_r$ sont born\'ees sur ${\bf R}^n$
et si $F_i-G_i$ est \`a d\'ecroissance suffisante \`a l'infini pour
tout $1\leq i\leq r$, alors $\prod_{i=1}^rF_i-\prod_{i=1}^rG_i$ est
\`a d\'ecroissance suffisante \`a l'infini.

{\rm (iv)}  Si $F:{\bf R}\rightarrow\C$ est \`a d\'ecroissance rapide
\`a l'infini et si $f\in F^\dual $, alors $F({\rm Tr}(fy))$ est \`a d\'ecroissance
suffisante \`a l'infini.
\end{lemm}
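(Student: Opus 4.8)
Le plan est de d\'eduire les quatre points de la stabilit\'e \'el\'ementaire de la propri\'et\'e de d\'ecroissance suffisante, le (i) servant de socle. D'abord, je prouverais le (i) en juxtaposant les d\'ecompositions t\'emoins: si $F=\sum_{i=1}^r F_i$ et $G=\sum_{j=1}^s G_j$ sont de telles \'ecritures, associ\'ees aux formes $L_1,\dots,L_r$ et $M_1,\dots,M_s$ (toutes \`a coefficients non nuls), alors pour $\lambda,\mu\in\C$ l'\'ecriture $\lambda F+\mu G=\sum_i\lambda F_i+\sum_j\mu G_j$ convient, car la famille $L_1,\dots,L_r,M_1,\dots,M_s$ est encore \`a coefficients non nuls et $(1+|L_i|)\lambda F_i$, $(1+|M_j|)\mu G_j$ restent born\'ees. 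Pour le (ii), je garderais les m\^emes formes $L_i$: en \'ecrivant $F=\sum_i F_i$, on a $FG=\sum_i F_iG$, et $(1+|L_i(y)|)\big(F_i(y)G(y)\big)=\big((1+|L_i(y)|)F_i(y)\big)G(y)$ est born\'ee comme produit de deux fonctions born\'ees.

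Ensuite, le (iii) se ram\`ene aux deux points pr\'ec\'edents via l'identit\'e t\'elescopique
$$\prod_{i=1}^r F_i-\prod_{i=1}^r G_i=\sum_{k=1}^r\Big(\prod_{i<k}F_i\Big)(F_k-G_k)\Big(\prod_{i>k}G_i\Big).$$
Dans chaque terme, le facteur $\prod_{i<k}F_i\cdot\prod_{i>k}G_i$ est born\'e (produit de fonctions born\'ees) tandis que $F_k-G_k$ est \`a d\'ecroissance suffisante par hypoth\`ese; le (ii) montre donc que chaque terme est \`a d\'ecroissance suffisante, et le (i) que leur somme l'est aussi.

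Enfin, le (iv) est le seul endroit o\`u intervient une information autre que formelle, \`a savoir l'arithm\'etique du corps totalement r\'eel. Comme $f\in F^\dual$ et que $F$ est totalement r\'eel, chaque coordonn\'ee $f_i=\tau_i(f)$ est non nulle, de sorte que la forme lin\'eaire $L(y):={\rm Tr}(fy)=\sum_{i=1}^n f_iy_i$ a tous ses coefficients non nuls. La d\'ecroissance rapide de la fonction $F:\R\to\C$ entra\^{\i}ne en particulier que $(1+|t|)|F(t)|$ est born\'ee sur $\R$, donc que $(1+|L(y)|)\,|F({\rm Tr}(fy))|$ est born\'ee sur $\R^n$; on conclut en prenant $r=1$, $L_1=L$ et $F_1(y)=F({\rm Tr}(fy))$. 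Il n'y a pas d'obstacle s\'erieux dans cette preuve, qui est enti\`erement routini\`ere; l'unique point \`a ne pas n\'egliger est que la non-annulation des $f_i$ (cons\'equence du caract\`ere totalement r\'eel de $F$) est exactement ce qui garantit la condition sur les coefficients des formes lin\'eaires impos\'ee par la d\'efinition.
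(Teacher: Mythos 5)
Votre preuve est correcte et suit essentiellement la m\^eme d\'emarche que celle du texte: les (i) et (ii) y sont d\'eclar\'es imm\'ediats, le (iii) y est trait\'e par r\'ecurrence via $F_1F_2-G_1G_2=(F_1-G_1)F_2+G_1(F_2-G_2)$ (dont votre somme t\'elescopique n'est que la forme d\'evelopp\'ee), et le (iv) repose exactement sur la non-annulation des coordonn\'ees de $f\in F^\dual$. Rien \`a redire.
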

\begin{proof}
   les (i) et (ii)  sont imm\'ediats, le (iii)
se d\'emontre par r\'ecurrence sur $r$ en utilisant la formule $F_1F_2-G_1G_2=
(F_1-G_1)F_2+G_1(F_2-G_2)$ et le (iv) provient du fait que si
$f\in F^\dual $, alors $f$ n'a aucune de ses coordonn\'ees nulle.
\end{proof}

\subsubsection{D\'ecroissance suffisante de $F(\phi,{\cal B},y)$}\label{SSS17}
Le r\'esultat suivant est imm\'ediat.
\begin{lemm}\phantomsection\label{D10}  Soient $a\in [0,1]$ et $\zeta\neq1$ une 
racine de l'unit\'e;
posons
$$F_{a,\zeta}(y):={{e^{-ay}}\over{1-\zeta e^{-y}}}.$$

{\rm (i)}  Si $k\in{\bf N}$, la d\'eriv\'ee $k$-i\`eme $F^{(k)}_{a,\zeta}$
de $F_{a,\zeta}$ est born\'ee sur ${\bf R}$ et m\^eme \`a d\'ecroissance
rapide \`a l'infini si $k\geq 1$ ou si $k=0$ et $a\neq 0,1$.

{\rm (ii)}  Soient ${\bf 1}_+:={\bf 1}_{\R_+}$ et ${\bf 1}_-:={\bf 1}_{\R_-}$.
Alors $y\mapsto \zeta^aF_{a,\zeta}(y)-
(-1)^a{\bf 1}_{(-1)^a}(y)$ est \`a d\'ecroissance rapide
\`a l'infini si $a\in\{0,1\}$.
\end{lemm}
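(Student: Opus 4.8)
The plan is to read off the behaviour of $F_{a,\zeta}$ and of its derivatives at each of $\pm\infty$ from two factorisations, one adapted to each end, and then to match the two degenerate limiting values against the step functions appearing in (ii). First I would observe that $F_{a,\zeta}$ is ${\cal C}^\infty$ on all of $\R$: the denominator $1-\zeta e^{-y}$ never vanishes for $y\in\R$, since $e^{-y}>0$ while $\zeta^{-1}$ is a root of unity $\neq 1$, hence not a positive real. Near $+\infty$ I would write $F_{a,\zeta}(y)=e^{-ay}g(y)$ with $g(y)=(1-\zeta e^{-y})^{-1}$; near $-\infty$, factoring $-\zeta e^{-y}$ out of the denominator, I would write $F_{a,\zeta}(y)=-\zeta^{-1}e^{(1-a)y}h(y)$ with $h(y)=(1-\zeta^{-1}e^{y})^{-1}$.

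Next I would compute the asymptotics. By induction, each derivative $g^{(j)}$ is a finite linear combination of the functions $e^{-iy}(1-\zeta e^{-y})^{-(i+1)}$ for $1\le i\le j$; since $e^{-iy}\to 0$ exponentially and $(1-\zeta e^{-y})^{-(i+1)}\to 1$ as $y\to+\infty$, this gives $g(y)\to 1$ and $g^{(j)}(y)\to 0$ exponentially for $j\ge 1$. Leibniz' rule yields $F_{a,\zeta}^{(k)}(y)=e^{-ay}\sum_{j=0}^{k}\binom{k}{j}(-a)^{k-j}g^{(j)}(y)$, so as $y\to+\infty$: if $a>0$ the prefactor $e^{-ay}$ forces exponential decay of $F_{a,\zeta}^{(k)}$ for every $k$; if $a=0$ only the $j=k$ term survives, so $F_{0,\zeta}^{(k)}=g^{(k)}$, tending to $1$ for $k=0$ and decaying exponentially for $k\ge 1$. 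The symmetric computation with $h$ (whose derivatives are combinations of $e^{iy}(1-\zeta^{-1}e^{y})^{-(i+1)}$) shows that as $y\to-\infty$: if $a<1$ every $F_{a,\zeta}^{(k)}$ decays exponentially, while if $a=1$ one has $F_{1,\zeta}^{(k)}=-\zeta^{-1}h^{(k)}$, tending to $-\zeta^{-1}$ for $k=0$ and decaying exponentially for $k\ge 1$.

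Assembling these two computations gives (i): for $k\ge 1$ the derivative $F_{a,\zeta}^{(k)}$ decays exponentially at both ends, hence is bounded and rapidly decreasing; for $k=0$ the function is continuous with finite limits at $\pm\infty$, hence bounded, and it decays at both ends precisely when $a\notin\{0,1\}$. For (ii), when $a=0$ we have $\zeta^0=1$ and $(-1)^0{\bf 1}_{(-1)^0}={\bf 1}_{\R_+}$, so $F_{0,\zeta}(y)-{\bf 1}_{\R_+}(y)$ equals $F_{0,\zeta}(y)-1$ on $\R_+$ and $F_{0,\zeta}(y)$ on $\R_-$, both rapidly decreasing by the above. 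When $a=1$ we have $-(-1)^1{\bf 1}_{(-1)^1}={\bf 1}_{\R_-}$ and $\zeta F_{1,\zeta}(y)\to\zeta(-\zeta^{-1})=-1$ exponentially as $y\to-\infty$, so $\zeta F_{1,\zeta}(y)+{\bf 1}_{\R_-}(y)$ equals $\zeta F_{1,\zeta}(y)+1$ on $\R_-$ and $\zeta F_{1,\zeta}(y)$ on $\R_+$, again rapidly decreasing.

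As the text indicates, there is no real obstacle here: everything follows from elementary exponential estimates on the two factorisations. The only point requiring attention is purely bookkeeping, namely keeping track of the two degenerate endpoints $a\in\{0,1\}$ and checking that the sign and the support of the relevant step function in (ii) cancel exactly the non‑decaying limiting constant ($1$ at $+\infty$ when $a=0$, and $-1$ at $-\infty$ when $a=1$).
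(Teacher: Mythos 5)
Your proof is correct. The paper gives no argument for this lemma at all --- it is introduced with \og Le r\'esultat suivant est imm\'ediat\fg{} --- and your verification (the two factorisations $e^{-ay}(1-\zeta e^{-y})^{-1}$ at $+\infty$ and $-\zeta^{-1}e^{(1-a)y}(1-\zeta^{-1}e^{y})^{-1}$ at $-\infty$, Leibniz, and the matching of the limiting constants $1$ and $-\zeta^{-1}\cdot\zeta=-1$ against ${\bf 1}_+$ and $-{\bf 1}_-$) is exactly the routine computation the author is leaving to the reader.
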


Soit $\partial_i$ l'op\'erateur diff\'erentiel ${{-\partial}\over{\partial y_i}}$.  
Si $k=(k_1,\dots,k_n)\in\N^n$, soit 
$$\bpart^k=\prod_{i=1}^n\partial_i^{k_i}$$

\begin{lemm}\phantomsection\label{D7}  
Soient ${\cal B}\in\Z[{\cal U}_0(F)]$, 
et $\phi\in{\cal S}(F)$, ${\cal B}$-r\'eguli\`ere.

{\rm (i)} 
$\F$ est une fonction ${\cal C}^\infty$ sur ${\bf R}^n$ dont toutes les
d\'eriv\'ees sont born\'ees sur ${\bf R}^n$. 

{\rm (ii)}  Plus pr\'ecis\'ement,
si ${k}\in{\bf N}^n-\{(0,\dots,0)\}$, alors
$\bpart^{k}F(\phi,{\cal B},y)$ est \`a d\'ecroissance suffisante
\`a l'infini.

{\rm (iii)}  $F(\phi,{\cal B},y)-\phi(0)\chi_{{\cal B}}(0,y)$
est \`a d\'ecroissance suffisante \`a l'infini.
\end{lemm}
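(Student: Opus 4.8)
Le plan est de tout ramener au d\'eveloppement explicite fourni par la prop.\,\ref{smoo}. Comme $\phi$ est ${\cal B}$-r\'eguli\`ere, on peut \'ecrire
$$F(\phi,{\cal B},y)=\sum_{B,\zeta}\alpha_{B,\zeta}\prod_{i=1}^rF_{0,\zeta_i}\big({\rm Tr}(f_iy)\big),$$
o\`u les $f_i\in F^\dual$ sont lin\'eairement ind\'ependants et les $\zeta_i$ sont des racines de l'unit\'e~$\neq1$, en remarquant que $\frac{1}{1-\zeta_iq^{f_i}}=F_{0,\zeta_i}({\rm Tr}(f_iy))$ avec la notation du lemme~\ref{D10}. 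Tout l'\'enonc\'e r\'esulte alors de la combinaison des estim\'ees \`a une variable du lemme~\ref{D10} avec les propri\'et\'es de stabilit\'e du lemme~\ref{D11}.

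Pour le (i), le lemme~\ref{D10}\,(i) (avec $a=0$ et $\zeta_i\neq1$) montre que toute d\'eriv\'ee $F_{0,\zeta_i}^{(m)}$ est born\'ee sur $\R$. En appliquant $\bpart^k$ au produit et en d\'eveloppant par la formule de Leibniz, on obtient une somme finie de produits des fonctions $y\mapsto F_{0,\zeta_i}^{(m_i)}({\rm Tr}(f_iy))$ (\`a des constantes pr\`es); chaque facteur \'etant born\'e, il en est de m\^eme de chacune de ces d\'eriv\'ees, d'o\`u le caract\`ere ${\cal C}^\infty$ et la bornitude de toutes les d\'eriv\'ees. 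Pour le (ii), si $k\neq(0,\dots,0)$, l'un au moins des exposants $m_i$ est $\geq1$ dans chaque terme du d\'eveloppement de Leibniz, de sorte que le facteur $F_{0,\zeta_i}^{(m_i)}$ correspondant est \`a d\'ecroissance rapide sur $\R$ (lemme~\ref{D10}\,(i)); comme $f_i\in F^\dual$ n'a aucune coordonn\'ee nulle, le lemme~\ref{D11}\,(iv) assure que $y\mapsto F_{0,\zeta_i}^{(m_i)}({\rm Tr}(f_iy))$ est \`a d\'ecroissance suffisante, et en multipliant par les autres facteurs born\'es (lemme~\ref{D11}\,(ii)) puis en sommant (lemme~\ref{D11}\,(i)), on obtient que $\bpart^kF(\phi,{\cal B},y)$ est \`a d\'ecroissance suffisante.

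Pour le (iii), le lemme~\ref{D10}\,(ii) (cas $a=0$) montre que $F_{0,\zeta_i}-{\bf 1}_+$ est \`a d\'ecroissance rapide; posant $F_i(y)=F_{0,\zeta_i}({\rm Tr}(f_iy))$ et $G_i(y)={\bf 1}_+({\rm Tr}(f_iy))$, qui sont born\'ees, le lemme~\ref{D11}\,(iv) donne que $F_i-G_i$ est \`a d\'ecroissance suffisante, puis le lemme~\ref{D11}\,(iii) que $\prod_iF_i-\prod_iG_i$ l'est aussi. En sommant (lemme~\ref{D11}\,(i)) on obtient que
$$F(\phi,{\cal B},y)-\Phi(y)\ \text{est \`a d\'ecroissance suffisante},\qquad\text{o\`u}\quad\Phi(y):=\sum_{B,\zeta}\alpha_{B,\zeta}\prod_{i=1}^r{\bf 1}_+\big({\rm Tr}(f_iy)\big).$$
Il reste \`a identifier $\Phi$ avec $\phi(0)\chi_{\cal B}(0,y)$, ce qui est le seul point non formel.

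Pour cela, l'id\'ee est de calculer $\lim_{t\to+\infty}F(\phi,{\cal B},ty)$ de deux fa\c{c}ons, pour $y$ hors de la r\'eunion finie des hyperplans ${\rm Tr}(fy)=0$. D'une part, $\Phi$ et $\chi_{\cal B}(0,\cdot)$ sont homog\`enes de degr\'e~$0$ en $y$ (rem.\,\ref{partit}\,(o)), tandis que $F(\phi,{\cal B},y)-\Phi(y)$, \'etant \`a d\'ecroissance suffisante, tend vers $0$ le long du rayon g\'en\'erique $t\mapsto ty$; donc $\lim_{t\to+\infty}F(\phi,{\cal B},ty)=\Phi(y)$. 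D'autre part, comme $\chi_{\cal B}(\alpha,ty)=\chi_{\cal B}(\alpha,y)$, on a $F(\phi,{\cal B},ty)=\sum_{\alpha\in F}\phi(\alpha)\chi_{\cal B}(\alpha,y)e^{-t\,{\rm Tr}(\alpha y)}$, et l'observation cl\'e est que ${\rm Tr}(\alpha y)>0$ pour tout $\alpha\neq0$ tel que $\chi_{\cal B}(\alpha,y)\neq0$: en effet, d'apr\`es la rem.\,\ref{partit}\,(ii), $\chi_{\cal B}(\cdot,y)$ est support\'e par une r\'eunion finie de c\^ones de la forme $\sum_i\R(\epsilon(f_i,y))f_i$, sur chacun desquels un \'el\'ement $\alpha=\sum_ic_if_i$ v\'erifie ${\rm Tr}(\alpha y)=\sum_i|c_i|\,|{\rm Tr}(f_iy)|\geq0$, avec in\'egalit\'e stricte si $\alpha\neq0$. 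Par convergence domin\'ee (la s\'erie en $y$ \'etant absolument convergente, cf.~prop.\,\ref{D1}), seul le terme $\alpha=0$ survit \`a la limite, d'o\`u $\lim_{t\to+\infty}F(\phi,{\cal B},ty)=\phi(0)\chi_{\cal B}(0,y)$. En comparant, $\Phi(y)=\phi(0)\chi_{\cal B}(0,y)$ hors des hyperplans, donc $F(\phi,{\cal B},y)-\phi(0)\chi_{\cal B}(0,y)$ co\"{\i}ncide presque partout avec la fonction \`a d\'ecroissance suffisante $F(\phi,{\cal B},y)-\Phi(y)$, ce qui prouve le (iii). L'obstacle principal est pr\'ecis\'ement cette identification: le d\'eveloppement de la prop.\,\ref{smoo} fait intervenir des bases auxiliaires $(B,\zeta)$ sans lien direct avec ${\cal B}$, de sorte qu'on ne peut pas lire la partie constante terme \`a terme (comme le permettrait le lemme~\ref{chi0} pour un c\^one unique), et c'est l'argument des deux limites, joint au calcul de signe ${\rm Tr}(\alpha y)>0$, qui comble l'\'ecart.
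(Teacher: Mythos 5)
Votre traitement des (i) et (ii) est correct et suit exactement la voie du texte : on part de l'\'ecriture de la prop.\,\ref{smoo}, on applique Leibniz, et on combine les lemmes~\ref{D10} et~\ref{D11}. Pour le (iii), en revanche, vous empruntez une route r\'eellement diff\'erente. Le texte se ram\`ene par lin\'earit\'e \`a un seul $B$, utilise la formule explicite~(\ref{F1}) (apr\`es ajustement des signes pour que $e_n\in C(f_1,\dots,f_n)$), observe que le seul terme non d\'ecroissant est celui de $\alpha=0$, dont la partie constante est $\phi(0){\bf 1}_{C(f_1^\vee,\dots,f_n^\vee)}$, et identifie celle-ci \`a $\phi(0)\chi_B(0,y)$ par le lemme~\ref{chi0}; cela oblige \`a faire abstraction des p\^oles apparents le long des ${\rm Tr}(f_iy)=0$. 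Vous partez au contraire de l'\'ecriture r\'egularis\'ee de la prop.\,\ref{smoo} (o\`u tous les $\zeta_i\neq1$, donc sans p\^oles r\'eels), vous en extrayez la partie localement constante $\Phi$ via le (ii) du lemme~\ref{D10} et le (iii) du lemme~\ref{D11}, puis vous identifiez $\Phi$ \`a $\phi(0)\chi_{\cal B}(0,y)$ en calculant $\lim_{t\to+\infty}F(\phi,{\cal B},ty)$ de deux fa\c{c}ons, la positivit\'e de ${\rm Tr}(\alpha y)$ sur le support de $\chi_{\cal B}(\cdot,y)$ jouant le r\^ole clef (c'est le m\^eme calcul que dans la preuve du lemme~\ref{D16}). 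Les deux arguments sont corrects: le v\^otre \'evite la question des p\^oles apparents au prix de l'argument de double limite, et a l'avantage de traiter ${\cal B}$ globalement; celui du texte est plus direct mais repose sur le lemme~\ref{chi0} et la lin\'earit\'e en $B$. Un seul point \`a expliciter chez vous: l'\'egalit\'e $\Phi=\phi(0)\chi_{\cal B}(0,\cdot)$ n'est \'etablie qu'en dehors d'une r\'eunion finie d'hyperplans ${\rm Tr}(fy)=0$ avec $f\in F^\dual$; la diff\'erence r\'esiduelle, born\'ee et port\'ee par ces hyperplans dont les formes lin\'eaires n'ont aucun coefficient nul, est elle-m\^eme \`a d\'ecroissance suffisante (prendre $L={\rm Tr}(fy)$, qui s'annule sur le support), de sorte que l'\'egalit\'e presque partout suffit bien \`a conclure.
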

\begin{proof}
Les (i) et (ii) sont une cons\'equence imm\'ediate de la prop.\,\ref{smoo}
et des lemmes~\ref{D10} et~\ref{D11}.

%Par lin\'earit\'e, on se ram\`ene au cas o\`u ${\cal B}=
%B=(f_1,\dots,f_n)$ est une base de $F$ sur $\Q$.  Choisissons des id\'eaux
%fractionnaires ${\goth a},{\goth b}$ tels que $\phi$ soit constante
%modulo ${\goth a}$ et \`a support dans ${\goth b}$ que l'on suppose
%de plus contenir $f_1,\dots,f_n$.
%La formule 
%\mar{Remettre la transform\'ee de Fourier...}
%\begin{equation*}\label{formule}
%\F
%=  \epsilon(B)\sqrt{\tfrac{{\rm N}({\goth a})}{{\rm N}({\goth a}^\vee)}}
%\sum_{\beta\in{\goth a}^\vee/{\goth b}^\vee}\phi^\vee(\beta)
%\sum_{\alpha\in X({B},y_0)\cap {\goth b}}
%{{\zeta^{\alpha\beta}q^\alpha}
%\over{\prod_{i=1}^n(1-\zeta^{f_i\beta}q^{f_i})}}
%\end{equation*}
%montre que les (i) et (ii) sont une cons\'equence imm\'ediate du lemme suivant.
%\begin{lemm}\label{D12}  
%Soient $\alpha\in X({B},y_0)$, $\beta\in{\goth a}^\vee/
%{\goth b}^\vee$ tel que $\phi^\vee(\beta)\neq 0$
%et ${k}\in{\bf N}^n$. Alors,
%$$\bpart^{k}{{q^\alpha}\over{\prod_{i=1}^n(1-\zeta^{f_i\beta}q^{f_i})}}$$
%est \`a d\'ecroissance suffisante \`a l'infini dans les cas suivants:
%
%$\bullet$ ${k}\neq(0,\dots,0)$,
%
%$\bullet$ ${k}=(0,\dots,0)$ et $\alpha\notin\Z f_1+\cdots+\Z f_n$.
%\end{lemm}
%\begin{proof}
   %Comme $\phi$ est ${\cal B}$-r\'eguli\`ere,
%$\zeta^{f_i\beta}\neq 1$ pour tout $i$.
%D'autre part, on peut \'ecrire $\alpha\in X({B},y_0)$
%sous la forme $a_1f_1+\cdots+a_nf_n$, avec $a_i\in [0,1]$.  On a alors
%$${{q^\alpha}\over{\prod_{i=1}^n(1-\zeta^{f_i\beta}q^{f_i})}}=
%\prod_{i=1}^nF_{a_i,\zeta^{f_i\beta}}({\rm Tr}(f_iy)),$$
%et le lemme est une cons\'equence facile des lemmes~\ref{D10} et~\ref{D11}.
%\end{proof}

Pour d\'emontrer le (iii), on peut changer certains $f_i$ en $-f_i$ (ce qui ne change
pas $\chi_B$ et donc pas $F(\phi,{\cal B},y)$) pour assurer que $e_n\in C(f_1,\dots,f_n)$.
On peut aussi utiliser l'expression~(\ref{F1}) pour $F(\phi,{\cal B},y)$;
le seul terme qui n'est pas \`a d\'ecroissance suffisante (en faisant abstraction des
p\^oles apparents le long des hyperplans ${\rm Tr}(f_iy)=0$) est
celui correspondant \`a $\alpha=0$, qui vaut $\frac{\phi(0)}{\prod_{i=1}^n(1-q^{af_i})}$,
tend vers $\phi(0)$ \`a l'infini du c\^one d\'efini par
${\rm Tr}(f_iy)>0$ pour tout $i$ (i.e. $C(f_1^\vee,\dots,f_n^\vee)$) et est
\`a d\'ecroissance suffisante sur les c\^ones compl\'ementaires.
Plus pr\'ecis\'ement, on d\'eduit des lemmes~\ref{D10}
et~\ref{D11} (iii) que $F(\phi,{\cal B},y)-\phi(0){\bf 1}_{C(f_1^\vee,\dots,f_n^\vee)}$
est \`a d\'ecroissance suffisante.  On conclut en utilisant le lemme~\ref{chi0}.
\end{proof}

%{\tiny
%Pour d\'emontrer le (iii), commen\c cons par remarquer que les seuls
%termes qui ne sont pas \`a d\'ecroissance suffisante dans la formule (\ref{formule}),
%sont ceux pour lesquels $\alpha\in 
%X({\cal B})
%\cap (\Z f_1+\cdots+\Z f_n)$ qui ne comporte qu'un
%\'el\'ement $\alpha_0=
%a_1f_1+\cdots+a_n f_n$ o\`u les $a_i$ sont donn\'es
%par la formule $a_i=0$ (resp. $a_i=1$) si
%$\epsilon_{y_0,i}=1$ (resp. $\epsilon_{y_0,i}=-1$).
%D'autre part, on a
%\mar{Voir la d\'efinition de $\chi_{{\cal B}}(0,y)$...}
%$$\chi_{{\cal B}}(0,y)=\epsilon(B)\prod_{i=1}^n\epsilon_i(B){\bf 1}_{\epsilon
%_i(B)}({\rm Tr}(f_iy))$$ et on v\'erifie 
%que
%$(\zeta^{f_i\beta})^{a_i}F_{a_i,\zeta^{f_i\beta}}({\rm Tr}(f_iy))-
%\epsilon_i(B)
%{\bf 1}_{\epsilon_i(B)}({\rm Tr}(f_iy))$ est \`a d\'ecroissance suffisante \`a l'infini
%(utiliser le (ii) du lemme~\ref{D10}
%et le (iv) du lemme~\ref{D11}).
%On en d\'eduit, utilisant le (iii) du lemme~\ref{D11}, que 
%$${{\zeta^{\alpha_0\beta}q^{\alpha_0}}
%\over{\prod_{i=1}^n(1-\zeta^{f_i\beta}q^{f_i})}}-\epsilon(B)\chi_{{\cal B}}(0,y)=
%\prod_{i=1}^n(\zeta^{f_i\beta})^{a_i}F_{a_i,\zeta^{f_i\beta}}({\rm Tr}(f_iy))-
%\prod_{i=1}^n\epsilon_i(B)
%{\bf 1}_{\epsilon_i(B)}({\rm Tr}(f_iy))$$ est \`a d\'ecroissance suffisante \`a l'infini.
%Pour conclure, il suffit alors de remarquer que
%$$\phantom{X}\hskip3cm \sqrt{\tfrac{{\rm N}({\goth a})}{{\rm N}({\goth a}^\vee)}}
%\sum_{\beta\in{\goth a}^\vee/{\goth b}^\vee}\phi^\vee(\beta)
%=(\phi^\vee)^\vee(0)=\phi(0)\hskip3cm\qedhere$$
%}
%\end{proof}

\section{Fonctions $L$ des corps totalement r\'eels}\label{SSS12}

Si $\phi\in{\cal S}(F,\psi,R)$ et si $V$ est
un sous-groupe d'indice fini de $U_F$, la s\'erie
$$L(\phi,\psi,s):={1\over{[U_F:V]}}\sum_{{\alpha\in F^\dual}\atop{\alpha\ {\rm mod}\ V}}{{\phi(\alpha)
\psi(\alpha)}\over{|{\rm N}(\alpha)|^s}}$$
ne d\'epend pas du choix de $V$,
converge absolument sur le demi-plan ${\rm Re}\, s>1$ et d\'efinit sur ce
demi-plan une fonction holomorphe.
\begin{rema}\phantomsection\label{smoo2.1}
Si ${\goth q}=(\varpi)$ comme dans la rem.\,\ref{smoo2}, on a
$$\phi_{\goth q}(\alpha)=\phi(\alpha)-N(\varpi)\phi(\tfrac{\alpha}{\varpi})
\quad{\rm et}\quad
L(\phi_{\goth q},\psi,s)=(1-N(\varpi)^{1-s})L(\phi,\psi,s)$$
ce qui permet de d\'eduire les propri\'et\'es de $L(\phi,\psi,s)$ 
pour $\phi$ g\'en\'erale, de celles dans le cas o\`u $\phi$ est ${\cal B}$-r\'eguli\`ere.
\end{rema}

Nous allons utiliser le raffinement de la m\'ethode de Shintani 
expos\'e dans la partie pr\'ec\'edente pour d\'emontrer que cette
fonction a un prolongement m\'eromorphe \`a tout le plan
complexe.  La m\'ethode est une petite variante de celle de
Shintani, mais fournit quelques informations suppl\'ementaires.
En particulier, elle permet 
de d\'emontrer directement que la fonction
$L(\phi,\psi,s)$ s'annule aux entiers n\'egatifs (\`a un ordre
d\'ependant explicitement de $\psi$) et de donner une formule pour le terme dominant
({\og directement\fg} signifiant que l'on n'utilise pas l'\'equation fonctionnelle
et la pr\'esence de p\^oles de la fonction $\Gamma$, mais que l'on
travaille directement en l'entier n\'egatif consid\'er\'e).

\Subsection{Une formule int\'egrale pour $L(\phi,\psi,s)$}\label{SSS18}
\Subsubsection{Transform\'ee de Mellin des fonctions \`a d\'ecroissance suffisante}\label{SSS19}
\begin{lemm}\phantomsection\label{D6} Si
$F$ est \`a d\'ecroissance suffisante \`a l'infini, alors

{\rm (i)}  l'int\'egrale
$\Lambda(F,\psi,s)={1\over{\Gamma(s)^n}}\int_{{\bf R}^n}F(y)\psi(y)\epsilon
({\rm N}(y))|{\rm N}(y)|^s\prod_{i=1}^n{{dy_i}\over{|y_i|}}$
converge absolument pour $0<{\rm Re}\, s<{1\over n}$.

{\rm (ii)}  Pour presque tout $x\in{\bf R}_+$, 
l'int\'egrale\footnote{$d^\dual y$ \'etant la mesure de Haar pour le groupe multiplicatif; i.e.
$d^\dual y=\prod_{i=1}^{n-1}{{dy_i}\over{|y_i|}}$.}
$G(F,x)=\int_{|{\rm N}(y)|=x}F(y)\psi(y)\epsilon({\rm N}(y))d^\dual y$ converge absolument

{\rm (iii)} L'int\'egrale
${1\over{\Gamma(s)^n}}\int_0^{+\infty}G(F,x)x^s{{dx}\over x}$ converge
absolument si $0<{\rm Re}\, s<{1\over n}$ et est \'egale \`a $\Lambda(F,\psi,s)$.
\end{lemm}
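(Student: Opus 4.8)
The plan is to reduce all three parts to a single convergence estimate and then deduce (ii) and (iii) by Fubini--Tonelli. Write $\sigma={\rm Re}(s)$. Since $|\psi(y)\epsilon({\rm N}(y))|=1$ and $\bigl||{\rm N}(y)|^{s}\bigr|\prod_i\frac{dy_i}{|y_i|}=\prod_i|y_i|^{\sigma-1}\,dy_i$, part~(i) amounts to the finiteness, for $0<\sigma<\frac1n$, of $\int_{\R^n}|F(y)|\prod_i|y_i|^{\sigma-1}\,dy$. By the definition of sufficient decay at infinity and by part~(i) of lemme~\ref{D11}, it suffices to treat a single summand, i.e. to assume $|F(y)|\leq C/(1+|L(y)|)$ for a linear form $L(y)=\sum_i c_iy_i$ all of whose coefficients $c_i$ are nonzero. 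Splitting $\R^n$ into its $2^n$ open orthants and substituting $y_i=\pm t_i$ (which turns $L$ into another linear form with nonvanishing coefficients) reduces matters to proving
$$\int_{(\R_{>0})^n}\frac{1}{1+|L(t)|}\prod_{i=1}^n t_i^{\sigma-1}\,dt<\infty,\qquad 0<\sigma<\tfrac1n.$$

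First I would pass to polar coordinates $t=R\omega$ with $R>0$ and $\omega$ in the closed simplex $\bar\Delta=\{\omega_i\geq0,\ \sum_i\omega_i=1\}$, so that $\prod_i t_i^{\sigma-1}\,dt=R^{n\sigma-1}\prod_i\omega_i^{\sigma-1}\,dR\,d\omega$ and $|L(t)|=R\,|L(\omega)|$. As the integrand is nonnegative, Tonelli lets me perform the $R$-integral first: $\int_0^\infty\frac{R^{n\sigma-1}}{1+R|L(\omega)|}\,dR=\frac{\pi}{\sin(\pi n\sigma)}\,|L(\omega)|^{-n\sigma}$, an identity valid and convergent at $R=0$ exactly because $\sigma>0$ and at $R=\infty$ exactly because $n\sigma<1$ — this is precisely where both strict inequalities bounding ${\rm Re}(s)$ enter. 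The whole question is thus reduced to the finiteness of the angular integral $\int_{\bar\Delta}|L(\omega)|^{-n\sigma}\prod_i\omega_i^{\sigma-1}\,d\omega$ over the compact simplex.

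The main obstacle is this angular integral, and it is here that the hypothesis that $L$ has no vanishing coefficient is indispensable. The integrand has two kinds of singularities: the factor $\prod_i\omega_i^{\sigma-1}$ blows up along the faces $\{\omega_i=0\}$ and is locally integrable there because $\sigma>0$, while $|L(\omega)|^{-n\sigma}$ blows up along the slice $\{L=0\}\cap\bar\Delta$ and is transversally integrable because $n\sigma<1$. What must be controlled is their interaction along the strata where $\{L=0\}$ meets $\partial\bar\Delta$. Here one uses that $L$ restricts to a nonzero linear form on every face of $\bar\Delta$ — in particular $L(e_i)=c_i\neq0$ at every vertex — so that $\{L=0\}$ never passes through a vertex and meets each face transversally. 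Consequently, near any corner where $\omega_j=0$ for $j$ in some subset $J$ and $L=0$, the integrand factorizes, up to bounded factors, as $\prod_{j\in J}\omega_j^{\sigma-1}$ times $|L|^{-n\sigma}$, with $L$ depending to first order on a coordinate transverse to $\{L=0\}$ within the face; integrating the $\omega_j$ (each converges since $\sigma-1>-1$) and the transverse variable (bounded since $-n\sigma>-1$) gives local integrability at every such corner. Compactness of $\bar\Delta$ then yields finiteness of the angular integral, completing~(i).

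Finally, for (ii) and (iii) I would decompose the multiplicative Haar measure along the norm fibration: writing $x=|{\rm N}(y)|$, one has $\prod_i\frac{dy_i}{|y_i|}=\frac{dx}{x}\,d^\dual\omega$, where $d^\dual\omega$ is the invariant measure on the level set $\{|{\rm N}(y)|=x\}$, so that $\Gamma(s)^n\Lambda(F,\psi,s)=\int_0^\infty G(F,x)\,x^{s}\frac{dx}{x}$. Since (i) provides absolute convergence of the full integral $\int_{\R^n}|F(y)|\,|{\rm N}(y)|^{\sigma}\prod_i\frac{dy_i}{|y_i|}$, Tonelli shows that the inner integral defining $G(F,x)$ converges absolutely for almost every $x$, which is~(ii), and Fubini then identifies the iterated integral with $\Lambda(F,\psi,s)$ and yields its absolute convergence for $0<{\rm Re}(s)<\frac1n$, which is~(iii).
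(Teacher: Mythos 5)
Votre réduction de (ii) et (iii) à (i) par Fubini--Tonelli et la formule $\prod_i\frac{dy_i}{|y_i|}=d^\dual y\,\frac{dx}{x}$ est exactement celle du texte. En revanche, pour l'estimation clef du (i), vous suivez une route réellement différente. Le texte démontre le lemme~\ref{D9} par récurrence sur $n$, en intégrant les variables une à une au moyen de l'inégalité de convolution du lemme~\ref{D8}, $\int|t|^{s-1}(1+|t+a|)^{r-1}dt\leq C(1+|a|)^{r+s-1}$, qui fait descendre l'exposant total à chaque étape; c'est un argument entièrement unidimensionnel, qui donne au passage l'énoncé plus général avec des exposants $s_i$ distincts. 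Vous passez au contraire en coordonnées radiales-angulaires: l'intégrale radiale se calcule exactement (fonction Bêta, $\frac{\pi}{\sin(\pi n\sigma)}|L(\omega)|^{-n\sigma}$), ce qui rend transparente l'origine des deux bornes $0<\sigma$ et $n\sigma<1$, et tout se ramène à l'intégrabilité de $|L(\omega)|^{-n\sigma}\prod_i\omega_i^{\sigma-1}$ sur le simplexe compact. Le point délicat de votre approche est l'analyse locale aux endroits où $\{L=0\}$ rencontre le bord du simplexe; votre observation que $L$ ne s'annule en aucun sommet et reste non constante sur toute face qu'elle rencontre (puisque tous les $c_i$ sont non nuls) est correcte et suffit bien à séparer les singularités, à condition d'intégrer d'abord la variable transverse à $\{L=0\}$ (uniformément en les $\omega_j$, $j\in J$) puis les $\omega_j$; votre rédaction est un peu elliptique sur ce point mais l'idée est juste. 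Les deux démonstrations sont de difficulté comparable: celle du texte est plus élémentaire et plus souple (exposants arbitraires), la vôtre isole mieux la signification géométrique de la condition $0<{\rm Re}\,s<\frac1n$.
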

\begin{proof}
Remarquons
tout d'abord que, gr\^ace au th\'eor\`eme de Fubini,
 les (ii) et (iii) sont des cons\'equences imm\'ediates du (i)
et de la formule $d^\dual y{{dx}\over x}=\prod_{i=1}^n{{dy_i}\over{|y_i|}}$
si $x=|{\rm N}(y)|$.  Le (i) est une cons\'equence du lemme~\ref{D9} ci-dessous
et la d\'ecroissance suffisante de $F$.
\end{proof}
\begin{lemm}\phantomsection\label{D8}  
Soient $r,s\in{\bf R}$ v\'erifiant $s>0$, $r> 0$ et
$r+s<1$;  il existe alors une constante $C(r,s)$ telle que l'on ait
$$H(a,r,s)=\int_{-\infty}^{+\infty}|t|^{s-1}(1+|t+a|)^{r-1}dt\leq C(r,s)
(1+|a|)^{r+s-1},$$
pour tout $a\in{\bf R}$.
\end{lemm}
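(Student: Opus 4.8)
The plan is to reduce to the case $a\ge 0$ by symmetry, dispose of bounded $a$ by a crude uniform estimate, and extract the exact power $(1+a)^{r+s-1}$ for large $a$ by rescaling and recognising a convergent Beta-type integral. I would first record the symmetry $H(a,r,s)=H(-a,r,s)$: the change of variable $t\mapsto -t$ leaves $|t|^{s-1}$ fixed and turns $(1+|t+a|)^{r-1}$ into $(1+|t-a|)^{r-1}$, so it suffices to treat $a\ge 0$ and to prove $H(a,r,s)\le C(r,s)(1+a)^{r+s-1}$. I would also note once and for all that $s>0$ together with $r+s<1$ forces $0<r<1$, so that $r-1\in(-1,0)$; since $1+|t+a|\ge 1$, this gives $(1+|t+a|)^{r-1}\le 1$, whence that factor carries no singularity and the only singularity of the integrand is the integrable one of $|t|^{s-1}$ at $t=0$, while at infinity the integrand is controlled by $|t|^{r+s-2}$ with $r+s-2<-1$.

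For bounded $a$, say $0\le a\le 2$, I would show $\sup_{0\le a\le 2}H(a,r,s)<\infty$ by splitting at a fixed radius: on the bounded part one uses $(1+|t+a|)^{r-1}\le 1$ and integrates the locally integrable $|t|^{s-1}$, while on the complementary region $|t+a|\ge \tfrac12|t|$, so $(1+|t+a|)^{r-1}\le 2^{1-r}|t|^{r-1}$ and the integrand is dominated by $2^{1-r}|t|^{r+s-2}$, integrable at infinity precisely because $r+s-2<-1$. Since $(1+a)^{r+s-1}$ is bounded below by $3^{r+s-1}>0$ on $[0,2]$, the desired inequality follows on this range with a suitable constant.

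For $a\ge 2$ the substitution $t=au$ gives
$$H(a,r,s)=a^{s}\int_{\R}|u|^{s-1}\bigl(1+a|u+1|\bigr)^{r-1}\,du,$$
so it remains to bound the last integral by a constant times $a^{r-1}$. I would split the $u$-line according to whether $a|u+1|\ge 1$ or $a|u+1|<1$. On the first region, using that $x\mapsto x^{r-1}$ is decreasing, $(1+a|u+1|)^{r-1}\le (a|u+1|)^{r-1}=a^{r-1}|u+1|^{r-1}$, which reduces matters to the convergence of the Beta-type integral $\int_{\R}|u|^{s-1}|u+1|^{r-1}\,du$; this is exactly where the three hypotheses enter, $s>0$ taming $u=0$, $r>0$ taming $u=-1$, and $r+s<1$ taming $u=\infty$. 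On the second region $(1+a|u+1|)^{r-1}\le 1$, the domain is an interval of length $2/a$ on which $|u|\ge 1-1/a\ge \tfrac12$, so its contribution is $O(a^{-1})$, which is $O(a^{r-1})$ since $r\ge 0$ and $a\ge 1$.

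The only delicate point is keeping track of how the $1+$ inside $(1+a|u+1|)^{r-1}$ interacts with the scaling; once that is handled, everything reduces to the elementary convergence of a Beta integral. Collecting the two estimates and using $a\le 1+a\le 2a$ for $a\ge 1$ yields $H(a,r,s)\le C(r,s)(1+a)^{r+s-1}$, which together with the symmetry and the bounded range completes the proof.
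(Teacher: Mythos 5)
Your proof is correct and follows essentially the same route as the paper: treat bounded $a$ by a uniform estimate and, for large $a$, rescale by $t=au$ to reduce everything to the convergence of the Beta-type integral $\int_{\R}|u|^{s-1}|u+1|^{r-1}\,du$, which uses exactly the three hypotheses $s>0$, $r>0$, $r+s<1$. The paper merely shortcuts your two-region discussion of the factor $1+a|u+1|$ by invoking once the inequality $(1+|t+a|)^{r-1}\le |t+a|^{r-1}$ (valid since $r-1<0$) before rescaling.
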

\begin{proof}
   On v\'erifie facilement que $H(a,r,s)$ est une fonction
continue de $a$ et on peut donc se contenter de prouver une telle
majoration pour $|a|>1$.  
On peut alors \'ecrire
$$H(a,r,s)
\leq\int_{-\infty}^{+\infty}|t|^{s-1}|t+a|^{r-1}\,dt
=|a|^{r+s-1}\int_{-\infty}^{+\infty}|t|^{s-1}|t+1|^{r-1}\,dt$$
d'o\`u l'on tire facilement le r\'esultat. 
\end{proof}
\begin{lemm}\phantomsection\label{D9}
  Soient $s_0,s_1,\dots,s_n\in{\bf R}$ v\'erifiant
$s_i>0$ et $\sum s_i<1$ et $L(y)=\sum_{i=1}^n \lambda_iy_i$, avec $\lambda_i
\neq0$ pour tout $1\leq i\leq n$;
alors 
$$\int_{{\bf R}^n}(1+|L(y)|)^{s_0-1}\prod_{i=1}^n |y_i|^{s_i-1}dy_i<\infty$$
\end{lemm}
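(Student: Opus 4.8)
The plan is to integrate the (non\-negative) integrand one variable at a time, using lemma~\ref{D8} at each step to absorb a factor $|y_i|^{s_i-1}$ into the power of $(1+|L|)$. The whole point is that lemma~\ref{D8} is tailored so that its output exponent $r+s-1$ has exactly the shape required to feed the next integration. Since the integrand is non\-negative, Tonelli's theorem allows one to compute the integral as an iterated integral, so it suffices to bound the latter; I proceed by induction on $n$.

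For the inductive step, write $L(y)=\lambda_1 y_1+L'(y')$, where $L'(y')=\sum_{i=2}^n\lambda_i y_i$ is a linear form in $y'=(y_2,\dots,y_n)$ all of whose coefficients are again non\-zero. Fixing $y'$ and substituting $t=\lambda_1 y_1$ (so that $|y_1|^{s_1-1}\,dy_1=|\lambda_1|^{-s_1}|t|^{s_1-1}\,dt$ and $1+|L(y)|=1+|t+L'(y')|$) gives
$$\int_{\bf R}|y_1|^{s_1-1}(1+|L(y)|)^{s_0-1}\,dy_1=|\lambda_1|^{-s_1}\,H\bigl(L'(y'),s_0,s_1\bigr),$$
with $H$ as in lemma~\ref{D8}. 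Because $s_0,s_1>0$ and $s_0+s_1\leq\sum_{i=0}^n s_i<1$, the hypotheses of lemma~\ref{D8} are met, and it yields
$$\int_{\bf R}|y_1|^{s_1-1}(1+|L(y)|)^{s_0-1}\,dy_1\leq |\lambda_1|^{-s_1}\,C(s_0,s_1)\,(1+|L'(y')|)^{s_0+s_1-1}.$$
After integration in $y_1$ the remaining integral over ${\bf R}^{n-1}$ has exactly the form of the statement, with $L$ replaced by $L'$ and the parameters $(s_0,s_1,\dots,s_n)$ replaced by $(s_0+s_1,s_2,\dots,s_n)$. These new parameters are positive and their sum is still $\sum_{i=0}^n s_i<1$, so the induction hypothesis applies and gives finiteness. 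The base case $n=1$ is the same substitution, finiteness of $H(0,s_0,s_1)=\int_{\bf R}|t|^{s_1-1}(1+|t|)^{s_0-1}\,dt$ following from $s_1>0$ (integrability at $0$) and $s_0+s_1<1$ (integrability at $\infty$).

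There is no genuine obstacle; the only thing to monitor is that the hypotheses of lemma~\ref{D8} remain valid at every stage, i.e. that each partial sum $s_0+s_1+\cdots+s_k$ stays strictly below $1$. This is immediate from $\sum_{i=0}^n s_i<1$ together with the positivity of the $s_i$, and it is precisely the role played by the global assumption $\sum s_i<1$.
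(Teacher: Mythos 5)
Your proof is correct and is essentially the paper's own argument: the paper likewise integrates out one variable (it happens to take $y_n$ rather than $y_1$), invokes Lemma~\ref{D8} to replace $(1+|L|)^{s_0-1}|y_n|^{s_n-1}$ by $C(s_0,s_n)|\lambda_n|^{-s_n}(1+|M|)^{s_0+s_n-1}$, and concludes by the same induction on $n$. Your write-up is simply more explicit about the change of variables, the verification of the hypotheses of Lemma~\ref{D8} at each stage, and the use of Tonelli.
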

\begin{proof}
   En utilisant le lemme~\ref{D8}, on peut
majorer cette int\'egrale par
$$C(s_0,s_n)|\lambda_n|^{-s_n}
\int_{{\bf R}^{n-1}}(1+|M(y)|)^{s_0+s_n-1}\prod_{i=1}^{n-1}
|y_i|^{s_i-1}dy_i,$$
o\`u $M(y)=\sum_{i=1}^{n-1}\lambda_iy_n$.  On d\'eduit
alors le r\'esultat par une r\'ecurrence imm\'ediate.
\end{proof}

\subsubsection{La fonction $G(\phi,{\cal B},x)$ et sa transform\'ee de Mellin}\label{SSS20}
On suppose que $\phi$ est ${\cal B}$-r\'eguli\`ere et
que $\phi(0)=0$ dans ce qui suit. La fonction $F(\phi,{\cal B},y)$ est
alors \`a d\'ecroissance suffisante \`a l'infini (lemme~\ref{D7})
et il r\'esulte du (i) du lemme~\ref{D6} que
$$G(\phi,{\cal B},x):=\int_{{|{\rm N}(y)|=x}} 
F(\phi,{\cal B},y)\psi(y)\epsilon({\rm N}(y))d^\dual y$$
converge pour presque tout $x>0$.
\begin{lemm}\phantomsection\label{D17}
  La fonction $G(\phi,{\cal B},x)x^{s-1}$ est sommable
si ${\rm Re}\, s>1$.  De plus, 
$$\int_{{\bf R}}G(\phi,{\cal B},x)x^{s-1}dx=\Gamma(s)^n[U_F:V]
L(\phi,\psi,s).$$
\end{lemm}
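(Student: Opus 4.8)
The plan is to reduce the statement to the integral identity
$$\int_0^{+\infty}G(\phi,{\cal B},x)\,x^{s-1}\,dx=\int_{{\bf R}^n}F(\phi,{\cal B},y)\,\psi(y)\,\epsilon({\rm N}(y))\,|{\rm N}(y)|^{s-1}\,dy$$
(here $\int_{\bf R}=\int_0^{+\infty}$ since $G$ lives on $x>0$), which, exactly as in lemme~\ref{D6} using $d^\dual y\,\tfrac{dx}{x}=\prod_i\tfrac{dy_i}{|y_i|}$ with $x=|{\rm N}(y)|$, follows from Fubini once absolute convergence is in hand. The whole difficulty lies in that convergence: although $\phi(0)=0$ makes $F(\phi,{\cal B},y)$ à décroissance suffisante (lemme~\ref{D7}), this is \emph{not} enough to render the $y$-integral absolutely convergent for ${\rm Re}\,s>1$, and expanding $\chi_{\cal B}$ cone by cone diverges (the term $\alpha$ with a vanishing coordinate in a basis $B$ contributes a non-decaying direction). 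The cancellation encoded in the fundamental identity prop.~\ref{D15} must be used, so I would pass through it \emph{before} unfolding.

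First I would fold $G(\phi,{\cal B},x)$ over $V$. The level set $\{|{\rm N}(w)|=x\}$ is $V$-stable (as $|{\rm N}(v)|=1$) with compact quotient $D_x$, and $d^\dual w$ is $V$-invariant; choosing the $D_x$ coherently, $D:=\bigcup_{x>0}D_x$ is a fundamental domain for $V$ on $({\bf R}^\dual)^n$. For $v\in V$ one has ${\rm N}(v)=\epsilon({\rm N}(v))=\pm1$, whence $\psi(vw)\epsilon({\rm N}(vw))=\psi(v){\rm N}(v)\,\psi(w)\epsilon({\rm N}(w))$, and folding gives
$$G(\phi,{\cal B},x)=\int_{D_x}\Bigl(\sum_{v\in V}\psi(v){\rm N}(v)F(\phi,{\cal B},vw)\Bigr)\psi(w)\epsilon({\rm N}(w))\,d^\dual w,$$
the interchange of $\sum_v$ and $\int_{D_x}$ being legitimate for almost every $x$ because $\int_{\{|{\rm N}|=x\}}|F|\,d^\dual w=\int_{D_x}\sum_v|F(vw)|\,d^\dual w<\infty$ a.e.\ by lemme~\ref{D6}(ii). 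The inner series converges absolutely a.e.\ (lemme~\ref{D16}), so prop.~\ref{D15} applies pointwise; since $\phi(0)=0$ gives $F^\dual=F$, and $\epsilon({\rm N}(w))^2=1$, I obtain the clean formula
$$G(\phi,{\cal B},x)=\int_{D_x}\Bigl(\sum_{{\beta\in F^\dual}\atop{\beps(\beta)=\beps(w)}}\phi(\beta)e^{-{\rm Tr}(\beta w)}\Bigr)\psi(w)\,d^\dual w.$$

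Both assertions follow from this. Reassembling level sets via $\int_0^{+\infty}\int_{D_x}d^\dual w\,x^{s-1}dx=\int_D(\cdot)\,|{\rm N}(w)|^{s-1}dw$, and unfolding the $\beta$-sum along $V$-orbits (write $\beta=v\beta_0$ with $\beta_0\in F^\dual/V$, use $\phi(v\beta_0)=\psi(v)\phi(\beta_0)$ and $\psi(v)^2=1$, and substitute $w\mapsto v^{-1}w$ to turn $\int_D$ into $\int_{{\bf R}^n}$), each orbit contributes
$$\phi(\beta_0)\int_{\beps(w)=\beps(\beta_0)}e^{-{\rm Tr}(\beta_0 w)}\psi(w)\,|{\rm N}(w)|^{s-1}dw=\Gamma(s)^n\,\frac{\phi(\beta_0)\psi(\beta_0)}{|{\rm N}(\beta_0)|^s},$$
the evaluation being the change of variables $u_i=\beta_{0,i}w_i>0$, which decouples into $\prod_i\int_0^{+\infty}e^{-u_i}u_i^{s-1}du_i=\Gamma(s)^n$ and produces $\psi(\beta_0)$ and $|{\rm N}(\beta_0)|^{-s}$. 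Summing over $\beta_0\in F^\dual/V$ yields $\Gamma(s)^n[U_F:V]L(\phi,\psi,s)$. For summability, the identical computation with $|\phi(\beta)|$, exponent ${\rm Re}\,s-1$, and the triangle inequality $|G(\phi,{\cal B},x)|\le\int_{D_x}\sum_{\beps(\beta)=\beps(w)}|\phi(\beta)|e^{-{\rm Tr}(\beta w)}d^\dual w$ bounds $\int_0^{+\infty}|G(\phi,{\cal B},x)|\,x^{{\rm Re}\,s-1}dx$ by $\Gamma({\rm Re}\,s)^n\sum_{\beta_0\in F^\dual/V}|\phi(\beta_0)|\,|{\rm N}(\beta_0)|^{-{\rm Re}\,s}$, finite for ${\rm Re}\,s>1$ since $L(\phi,\psi,s)$ converges absolutely there. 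This single finiteness both proves (i) and justifies every interchange above.

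The main obstacle is exactly this detour through prop.~\ref{D15}: one cannot see the $y$-integral converge directly, and must first fold $G$ over $V$ so as to replace the poorly-decaying $F(\phi,{\cal B},\cdot)$ by the rapidly convergent $\beta$-series, and only then unfold against the Dirichlet series defining $L(\phi,\psi,s)$.
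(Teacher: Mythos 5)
Your proof is correct and follows essentially the same route as the paper: fold $G(\phi,{\cal B},x)$ over $V$ on the level sets (justified by lemme~\ref{D6}\,(ii)), substitute the fundamental identity of la prop.\,\ref{D15}, then unfold against the Dirichlet series, with summability obtained by running the same computation on absolute values for ${\rm Re}\,s>1$. The only (cosmetic) difference is that you unfold directly over $V$ by tracking the signs $\psi(v)^2=1$, whereas the paper passes through the totally positive subgroup $V^+$ and regroups by sign vectors $\beps$; both bookkeepings give $[U_F:V]L(\phi,\psi,s)$.
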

\begin{proof}
    Utilisant le (ii) du 
lemme~\ref{D6} pour intervertir $\sum_{v\in V}$ et $\int_{\{y,\,|{\rm N}(y)|=x\}/V}$,
puis la prop.\,\ref{D15}, on obtient:
\begin{align*}
G(\phi,{\cal B},x)=& 
\int_{\{y,\,|{\rm N}(y)|=x\}/V}\sum_{v\in V}F(\phi,
{\cal B},vy)\psi(vy)\epsilon({\rm N}(vy))d^\dual y\\
= &\int_{\{y,\,|{\rm N}(y)|=x\}/V}\sum_{{\beta\in F^\dual }\atop{\beps(\beta)=\beps
(y)}} \phi(\beta)e^{-{\rm Tr}(\beta y)}\psi(y)d^\dual y.
\end{align*}
On en tire (en remarquant que {\og $\beps(y)=\beps(\beta)$\fg} $\Rightarrow$
{\og $\psi(y)=\psi(\beta)$\fg})
\begin{align*}
\int_0^{+\infty}G(\phi,{\cal B},x)x^{s-1}dx
= & \int_{{\bf R}^n/V}
\sum_{\beps(\beta)=\beps(y)}\phi(\beta)e^{-{\rm Tr}(\beta y)}\psi(y)
|{\rm N}(y)|^{s-1}dy\\
= & {1\over{[V:V^+]}}\int_{{\bf R}^n/V^+}
\sum_{\beps(\beta)=\beps(y)}\phi(\beta)e^{-{\rm Tr}(\beta y)}\psi(\beta)
|{\rm N}(y)|^{s-1}dy\\
= & {1\over{[V:V^+]}}\int_{{\bf R}^n}
\sum_{{\beps(\beta)=\beps(y)}\atop{\beta\ {\rm mod}\ V^+}}
\phi(\beta)e^{-{\rm Tr}(\beta y)}\psi(\beta)
|{\rm N}(y)|^{s-1}dy \\
= & {{\Gamma(s)^n}\over{[V:V^+]}}
\sum_{\beps\in\{\pm1\}^n}
\sum_{{\beps(\beta)=\beps}\atop{\beta\ {\rm mod}\ V^+}}
\phi(\beta)|{\rm N}(\beta)|^{-s}\psi(\beta)\\
= &{\Gamma(s)^n}[U_F:V]L(\phi,\psi,s).
\end{align*}
Pour justifier ces calculs (c'est-\`a-dire pour d\'emontrer la sommabilit\'e),
il faut remplacer tous les termes ci-dessus par
leur valeur absolue et utiliser le fait que la derni\`ere s\'erie converge
absolument si ${\rm Re}\, s>1$.
\end{proof}
\begin{rema}
Il serait plus naturel de prouver simultan\'ement le lemme~\ref{D16} et la sommabilit\'e
de $G(\phi,{\cal B},x)x^{s-1}$ en prouvant que
$$\int_{\Tr}\sum_{v\in V}\sum_{\alpha\in F^\dual}|\phi(\alpha)||\chi_{\cal B}(\alpha,vy)|
e^{-{\rm Tr}(\alpha vy)}y^{\sigma-1}<\infty,\quad{\text{si $\sigma>1$.}}$$
  Malheureusement cette quantit\'e
n'est finie pour aucun $\sigma$, ce qui nous force \`a faire les contorsions ci-dessus.
\end{rema}

\begin{coro}\phantomsection\label{D18} 
{\rm (i)}  La fonction $G(\phi,{\cal B},x)x^{s-1}$ est sommable
si ${\rm Re}\, s>0$

{\rm (ii)}  La fonction $L(\phi,\psi,s)$ admet un prolongement analytique
au demi-plan ${\rm Re}\, s>0$ et, si $0<{\rm Re}(s)<\frac{1}{n}$,
$$L(\phi,\psi,s)={1\over{[U_F:V]}}{1\over{\Gamma(s)^n}}\int_{{\bf R}^n}
F(\phi,{\cal B},y)\psi(y)\epsilon({\rm N}(y))|{\rm N}(y)|^{s-1}dy$$
\end{coro}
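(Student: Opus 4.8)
The plan is to recover both assertions purely from the two complementary convergence ranges already established: lemme~\ref{D6} controls the Mellin integral $\int_0^{+\infty}G(\phi,{\cal B},x)\,x^{s-1}dx$ near $x=0$, while lemme~\ref{D17} controls it near $x=+\infty$, and the monotonicity of $x\mapsto x^{{\rm Re}\,s-1}$ will let one propagate each one-sided bound across the whole half-plane.

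\emph{Pour le (i)}, I would split the integral at $x=1$. On $]0,1]$ the function $x\mapsto x^{{\rm Re}\,s-1}$ is non-increasing in ${\rm Re}\,s$: fixing $\sigma_1\in\,]0,\tfrac1n[$, one has $|G(\phi,{\cal B},x)|\,x^{{\rm Re}\,s-1}\leq|G(\phi,{\cal B},x)|\,x^{\sigma_1-1}$ for every $s$ with ${\rm Re}\,s\geq\sigma_1$, the right-hand side being integrable on $]0,1]$ by the absolute convergence in lemme~\ref{D6}; for $0<{\rm Re}\,s<\tfrac1n$ one invokes lemme~\ref{D6} directly. Symmetrically, on $[1,+\infty[$ the map $x\mapsto x^{{\rm Re}\,s-1}$ is non-decreasing in ${\rm Re}\,s$, so fixing $\sigma_2>1$ gives $|G(\phi,{\cal B},x)|\,x^{{\rm Re}\,s-1}\leq|G(\phi,{\cal B},x)|\,x^{\sigma_2-1}$ for ${\rm Re}\,s\leq\sigma_2$, integrable on $[1,+\infty[$ by lemme~\ref{D17}. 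Summing the two pieces shows that $G(\phi,{\cal B},x)x^{s-1}$ is sommable for every $s$ with ${\rm Re}\,s>0$, which is (i).

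\emph{Pour le (ii)}, set $M(s)=\int_0^{+\infty}G(\phi,{\cal B},x)\,x^{s-1}dx$. For ${\rm Re}\,s\in[\sigma_-,\sigma_+]\subset\,]0,+\infty[$ one has the domination $|G(\phi,{\cal B},x)|\,x^{{\rm Re}\,s-1}\leq|G(\phi,{\cal B},x)|\,(x^{\sigma_--1}+x^{\sigma_+-1})$, whose right-hand side is integrable by (i); differentiation under the integral sign (or Morera) then shows that $M$ is holomorphe sur le demi-plan ${\rm Re}\,s>0$. On ${\rm Re}\,s>1$, lemme~\ref{D17} gives $M(s)=\Gamma(s)^n[U_F:V]L(\phi,\psi,s)$, whence $L(\phi,\psi,s)=\frac{M(s)}{[U_F:V]\,\Gamma(s)^n}$. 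As $\Gamma$ is holomorphe et ne s'annule pas sur ${\rm Re}\,s>0$, the right-hand side is holomorphe on that half-plane and therefore furnishes the prolongement analytique of $L(\phi,\psi,s)$. Finally, for $0<{\rm Re}\,s<\tfrac1n$ lemme~\ref{D6}(iii) gives $M(s)=\int_{{\bf R}^n}F(\phi,{\cal B},y)\psi(y)\epsilon({\rm N}(y))|{\rm N}(y)|^{s-1}dy$, so the continued function satisfies $L(\phi,\psi,s)=\frac{1}{[U_F:V]}\frac{1}{\Gamma(s)^n}\int_{{\bf R}^n}F(\phi,{\cal B},y)\psi(y)\epsilon({\rm N}(y))|{\rm N}(y)|^{s-1}dy$, the announced formula.

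The only real subtlety is that the two input lemmas converge on the \emph{disjoint} strips $0<{\rm Re}\,s<\tfrac1n$ and ${\rm Re}\,s>1$, so at no single $s$ do both formulas hold simultaneously; but the argument never requires this, since each lemma only certifies integrability at one of the ends $x=0$, $x=+\infty$, and the monotonicity step in (i) is precisely what glues the two one-sided controls into a single function $M$ holomorphic on all of ${\rm Re}\,s>0$. The nonvanishing of $\Gamma$ on ${\rm Re}\,s>0$ is then the point that allows dividing and transporting the identity of lemme~\ref{D17} from ${\rm Re}\,s>1$ down to the critical strip.
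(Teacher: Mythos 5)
Votre démonstration est correcte et suit essentiellement la même voie que celle de l'article : pour le (i), la domination de $x^{{\rm Re}\,s-1}$ par $x^{\sigma_1-1}$ sur $]0,1]$ et par $x^{\sigma_2-1}$ sur $[1,+\infty[$ est exactement l'argument du texte (qui prend $\sigma_1=\tfrac{1}{2n}$ et $\sigma_2=2$), et pour le (ii) l'holomorphie de la transformée de Mellin sur ${\rm Re}\,s>0$ combinée aux deux identifications sur ${\rm Re}\,s>1$ (lemme~\ref{D17}) et $0<{\rm Re}\,s<\tfrac1n$ (lemme~\ref{D6}) est précisément ce que fait l'article. Rien à redire.
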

\begin{proof}
   On a d\'emontr\'e que la fonction $G(\phi,{\cal B},x)x^{s-1}$
est sommable si ${\rm Re}\, s>1$ (lemme~\ref{D17}) ou si
$0<{\rm Re}\, s<{1\over n}$ ((iii) du lemme~\ref{D6} combin\'e au lemme~\ref{D7}).  
Maintenant, si ${1\over n}\leq{\rm Re}\, s
\leq 1$, alors $G(\phi,{\cal B},x)x^{s-1}$ est domin\'e par
$G(\phi,{\cal B},x)x$ si $x\geq1$ et par
$G(\phi,{\cal B},x)x^{{1\over{2n}}-1}$ si $x\leq 1$; d'o\`u le
(i).  

Le (ii) en r\'esulte car $\int_0^{+\infty}G(\phi,{\cal B},x)x^{s-1}$
d\'efinit une fonction analytique sur le demi-plan ${\rm Re}\, s>0$ valant
$[U_F:V]\Gamma(s)^nL(\phi,\psi,s)$ si ${\rm Re}\, s>1$ (lemme~\ref{D17}) et
$\int_{{\bf R}^n}
F(\phi,{\cal B},y)\psi(y)\epsilon({\rm N}(y))|{\rm N}(y)|^{s-1}dy,$
si $0<{\rm Re}\, s<{1\over n}$  (lemme~\ref{D6}). 
\end{proof}

\Subsection{Prolongement analytique des fonctions~$L$}\label{SSS21}
\subsubsection{Prolongement analytique de fonctions d\'efinies par des int\'egrales}\label{SSS22}
Si $I\subset \{1,\dots,n\}$ et $s=(s_1,\dots, s_n)\in\C^n$,
posons $s_I:=(s_i)_{i\in I}\in \C^I$.
Si $I^c:= \{1,\dots,n\}\moins I$, on a $s=(s_I,s_{I^c})$, et $s_I\mapsto (s_I,0_{I^c})$
fournit un plongement naturel $\C^I\hookrightarrow\C^n$.

Soit $H:({\bf R}_+)^n\rightarrow\C$ une fonction
${\cal C}^\infty$ telle que $\bpart^k H$ est \`a d\'ecroissance
suffisante \`a l'infini pour tout $k\in\N^n$.
\begin{lemm}\phantomsection\label{D21}  
  Si $I$ est une partie de $\{1,\dots,n\}$ et
$s_I\in\C^I$, alors l'int\'egrale 
$$\Lambda_I(H,s_I)=\prod_{i\in I}\tfrac{1}{\Gamma(s_i)}\int_{{\bf R}_+^I}
H(y)\prod_{i\in I} y_i^{s_i}\tfrac{dy_i}{y_i}$$
converge absolument si $0<{\rm Re}\, s_i< {1\over{n}}$ pour tout $i\in I$, 
et d\'efinit une fonction holomorphe de $s_I$
qui poss\`ede un prolongement analytique 
\`a $\{s_I,\ {\rm Re}\, s_i<{1\over n},\ {\text{si $i\in I$}}\}$. 
\end{lemm}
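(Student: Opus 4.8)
The plan is to read $\Lambda_I(H,s_I)$ as an $|I|$-dimensional Mellin transform of the restriction $h:=H(\cdot,0_{I^c})$ of $H$ to the face $\{y_i=0:i\in I^c\}$, and to obtain the continuation by the classical device of trading smoothness at the origin for positive shifts of the exponents, the poles thus produced being absorbed by the factors $1/\Gamma(s_i)$. First I would record that $h$ is again a $\mathcal{C}^\infty$ function on $\R_+^I$ all of whose derivatives are à décroissance suffisante à l'infini: a decomposition $H=\sum_j H_j$ with $(1+|L_j(y)|)H_j(y)$ bounded and each $L_j$ having no zero coefficient restricts, upon setting $y_{I^c}=0$, to a decomposition of the same kind on $\R^I$, since the restricted forms $L_j(\cdot,0_{I^c})$ still have no zero coefficient. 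Convergence of $\Lambda_I(H,s_I)$ for $0<\mathrm{Re}\,s_i<1/n$ then follows from Lemma~\ref{D9}: near the origin the factors $y_i^{s_i-1}$ are integrable because $\mathrm{Re}\,s_i>0$ and $h$ is bounded, while at infinity the estimate $|H_j|\lesssim(1+|L_j|)^{-1}\le(1+|L_j|)^{s_0-1}$ with $s_0>0$ arbitrarily small, fed into \ref{D9}, gives integrability as soon as $\sum_{i\in I}\mathrm{Re}\,s_i<1$, which is guaranteed by $\mathrm{Re}\,s_i<1/n$ and $|I|\le n$. Holomorphy on this polystrip is the usual consequence of absolute convergence together with differentiation under the integral sign.

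For the continuation I would split each variable by $\R_+=[0,1]\cup(1,\infty)$, writing $\R_+^I=\bigsqcup_{J\subseteq I}R_J$ with $R_J=\prod_{i\in J}[0,1]\times\prod_{i\in I\moins J}(1,\infty)$, and continue each of the $2^{|I|}$ contributions separately. On $R_J$ I would integrate first over the \emph{tail} variables $y_i$, $i\in I\moins J$: since these stay away from $0$, the inner integral $G_J(y_J)=\int_{(1,\infty)^{I\moins J}}h\,\prod_{i\in I\moins J}y_i^{s_i-1}\,dy_i$ converges and is holomorphic for $\mathrm{Re}\,s_i<1/n$ (only the behaviour at infinity intervenes, again by \ref{D9}), and — because every derivative of $h$ is à décroissance suffisante — it is a $\mathcal{C}^\infty$ function of the \emph{head} variables $y_J$ on the compact cube $[0,1]^J$, obtained by differentiating under the integral. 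It is precisely this confinement of the head variables to a compact set that keeps the argument clean: I never need any decay estimate on the partial transform $G_J$, only its smoothness and boundedness up to the boundary $\{y_i=0\}$.

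It then remains to continue $\int_{[0,1]^J}G_J(y_J)\prod_{i\in J}y_i^{s_i-1}\,dy_i$ in the head variables. Integrating by parts $k_i$ times in each $y_i$, $i\in J$ (valid on the initial region $\mathrm{Re}\,s_i>0$, where the boundary contribution at $y_i=0$ vanishes), produces boundary terms at $y_i=1$ of the form $\big(\text{derivative of }G_J\big)|_{y_i=1}$ divided by products $s_i(s_i+1)\cdots(s_i+m_i)$, together with a remainder integral of $\bpart^{k_J}h$ that converges for $\mathrm{Re}\,s_i>-k_i$. Since $\tfrac{1}{\Gamma(s_i)}\cdot\tfrac{1}{s_i(s_i+1)\cdots(s_i+m_i)}=\tfrac{1}{\Gamma(s_i+m_i+1)}$ is entire, every pole along $s_i\in\Z_{\le 0}$ introduced by a boundary term is killed by the prefactor $1/\Gamma(s_i)$; letting the $k_i\to\infty$ shows that each $R_J$-contribution extends holomorphically to $\{s_I:\mathrm{Re}\,s_i<1/n,\ i\in I\}$, and summing over $J$ gives the assertion.

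The main obstacle I anticipate is essentially organisational: keeping track, across the $2^{|I|}$ regions and the iterated integrations by parts, of the joint holomorphy in $s_I=(s_J,s_{I\moins J})$. This should cause no genuine difficulty, because each boundary term factors as (meromorphic in $s_J$) times (holomorphic in $s_{I\moins J}$) while the remainders are jointly holomorphic; but the interchange of the tail and head integrations (licit on the initial region by absolute convergence and Fubini) and the verification that $G_J$ together with its $y_J$-derivatives are bounded up to $\{y_i=0\}$ must be carried out with some care. I expect no analytic difficulty beyond these verifications, the only quantitative input being Lemma~\ref{D9} (equivalently Lemma~\ref{D8}).
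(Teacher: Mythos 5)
Your proof is correct and follows essentially the same strategy as the paper's: restrict $H$ to the face $\R_+^I$ (where it remains $\mathcal{C}^\infty$ with all derivatives \`a d\'ecroissance suffisante), split according to which variables are near $0$ and which are away from $0$, and in the ``small'' variables trade smoothness for shifts of the exponents via integration by parts, the resulting poles being cancelled by the factors $1/\Gamma(s_i)$. The only real difference is that the paper uses a smooth partition of unity $H=\sum_J H_J$ with $H_J(y)=H(y)\prod_{j\in J}u(y_j)\prod_{j\notin J}(1-u(y_j))$, so that each integration by parts on $(0,\infty)$ produces no boundary term and the identity $\Lambda_I(H_J,s)=\Lambda_I(\bpart^{\ell_J}H_J,s_J+\ell_J,s_{J^c})$ is immediate, whereas your sharp splitting $\R_+=[0,1]\cup(1,\infty)$ forces you to track boundary terms at $y_i=1$ and check that $1/\bigl(\Gamma(s_i)\,s_i(s_i+1)\cdots(s_i+m_i)\bigr)=1/\Gamma(s_i+m_i+1)$ is entire --- which you do correctly, at the cost of the extra bookkeeping you yourself identify.
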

\begin{proof}
   Il suffit de d\'emontrer le r\'esultat pour $I=\{1,\dots,n\}$
car la restriction de $H$ \`a $\R_+^I$ est aussi une fonction ${\cal C}^\infty$
dont toutes les d\'eriv\'ees sont \`a d\'ecroissance suffisante \`a
l'infini.  

Soit $u:{\bf R}_+\rightarrow[0,1]$ une fonction
${\cal C}^\infty$ valant $1$ sur $[0,1]$ et $0$ sur $[2,+\infty[$.  Si
$J\subset \{1,\dots,n\}$, posons
$$H_J(y)=H(y)\Bigl(\prod_{j\in J}u(y_j)\Bigr)\Bigl(\prod_{j\notin J}
(1-u(y_j))\Bigr).$$
Alors $H=\sum_{J}H_J$ et donc $\Lambda_I(H,s)=
\sum_{J}\Lambda_I(H_J,s)$.
D'autre part, $\Lambda_I(H_J,s)$ est absolument convergente
sur l'ouvert $U_J$ produit des demi-plans
${\rm Re}\, s_i>0$ pour $j\in J$ et 
${\rm Re}\, s_j<{1\over n}$ pour $j\notin J$ (en particulier, 
elle s'annule si $s_i\in-{\bf N}$ pour un
$i\notin J$ \`a cause du z\'ero de $\Gamma(s_i)^{-1}$) et
une int\'egration par partie nous fournit l'identit\'e
\begin{equation}\label{partie}
\Lambda_I(H_J,s)=\Lambda_I(\bpart^{\ell_J}H_J,s_J+\ell_J,
s_{J^c})
\end{equation}
valable pour tout $\ell_J\in{\bf N}^J$.  On en
tire le prolongement analytique de $\Lambda_I(H_J,s)$ \`a l'ouvert $V_J:=\{
s\in\C^n,\  {\text{${\rm Re}\, s_j<{1\over n}$ pour
tout $j\notin J$}}\}$, et donc celui de
$\Lambda_I(H,s)$ \`a $\cap_J V_J=\{s,\ {\rm Re}\,s_j<\frac{1}{n}\ {\text{pour tout $j$}}\}$.
  Ceci permet de conclure.
\end{proof}

 Dans le cas o\`u $I=\{1,\dots,n\}$, la fonction
$\Lambda_I(H,s_I)$ est simplement not\'ee $\Lambda(H,s)$ ou
$\Lambda(H,s_1,\dots,s_n)$ suivant les cas.

\begin{lemm}\phantomsection\label{D21.1}
{\rm (i)}  Si $I\subset \{1,\dots,n\}$,
et si $k_{I^c}\in{\bf N}^{I^c}$, alors
$$\Lambda
(H,(s_I,-k_{I^c}))=\Lambda_I(\bpart^{k_{I^c}}H,s_I).$$

{\rm (ii)}  Soient $k=(k_1,\dots,k_n)\in{\bf N}^n$ et $I\subset\{1,\dots,n\}$.
On suppose que $\bpart^{k} H(y)$ 
est identiquement nulle sur chacun des hyperplans d'\'equation
$y_i=0$ pour $i\in I$; alors
$$\lim_{s\rightarrow -k}
\Bigl(\prod_{i\in I}\tfrac{1}{s_i+k_i}\Bigr)\Lambda(H,s)=
 \int_{{\bf R}_+^I}\bpart^{k} H(y_I,0_{I^c})\prod_{i\in I}\frac{d y_i}{y_i}$$
\end{lemm}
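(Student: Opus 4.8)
The engine of the whole argument is the one–dimensional Mellin–Gamma evaluation at negative integers, so I would establish that first. For a single variable and a smooth $f$ on $[0,+\infty[$ all of whose derivatives decay at infinity, the integration by parts already used for~(\ref{partie}) gives, for every $m\in{\bf N}$, the identity $\frac{1}{\Gamma(s)}\int_0^{+\infty}f(y)\,y^{s-1}\,dy=\frac{1}{\Gamma(s+m)}\int_0^{+\infty}(\partial^m f)(y)\,y^{s+m-1}\,dy$ (each step produces a factor $\frac1s$ and uses $\Gamma(s+1)=s\Gamma(s)$, the boundary terms at $0$ and $+\infty$ vanishing). Taking $m=k+1$ and specialising at $s=-k$ yields the clean formula $\big[\tfrac{1}{\Gamma(s)}\int_0^{+\infty}f(y)\,y^{s-1}\,dy\big]_{s=-k}=\int_0^{+\infty}(\partial^{k+1}f)(y)\,dy=(\partial^k f)(0)$.

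For~(i) I would write $\Lambda(H,s)$ as an iterated (Gamma–normalised) Mellin transform, Fubini being justified by the sufficient decay of all the $\bpart^\ell H$ together with lemmas~\ref{D6} and~\ref{D9}. Applying the one–dimensional specialisation above in each variable $j\in I^c$ replaces the operator $\frac{1}{\Gamma(s_j)}\int_0^{+\infty}(\cdot)\,y_j^{s_j-1}\,dy_j$ by the operator $g\mapsto(\partial_j^{k_j}g)|_{y_j=0}$; carrying this out for all $j\in I^c$ transforms $\Lambda(H,(s_I,-k_{I^c}))$ into $\prod_{i\in I}\frac{1}{\Gamma(s_i)}\int_{{\bf R}_+^I}(\bpart^{k_{I^c}}H)(y_I,0_{I^c})\prod_{i\in I}y_i^{s_i-1}\,dy_i$, which is exactly $\Lambda_I(\bpart^{k_{I^c}}H,s_I)$.

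For~(ii) I would first let $s_{I^c}\to-k_{I^c}$. Since $\Lambda(H,\cdot)$ is holomorphic near $-k$ (lemma~\ref{D21}) and the prefactor $\prod_{i\in I}\frac{1}{s_i+k_i}$ involves only the variables indexed by $I$, this substitution is harmless and, by~(i), reduces the problem to computing $\lim_{s_I\to-k_I}\prod_{i\in I}\frac{1}{s_i+k_i}\,\Lambda_I(\bpart^{k_{I^c}}H,s_I)$. Shifting by $k_I$ in the $I$–variables as in the first paragraph, writing $u_i:=s_i+k_i$ and $h:=(\bpart^kH)(\,\cdot\,,0_{I^c})$ on ${\bf R}_+^I$, and using $\frac{1}{u_i\Gamma(u_i)}=\frac{1}{\Gamma(u_i+1)}$, this quantity equals $\prod_{i\in I}\frac{1}{\Gamma(u_i+1)}\int_{{\bf R}_+^I}h(y_I)\prod_{i\in I}y_i^{u_i-1}\,dy_i$. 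As $u_I\to0$ the factors $\frac{1}{\Gamma(u_i+1)}$ tend to $1$, and I would conclude that the integral tends to $\int_{{\bf R}_+^I}h(y_I)\prod_{i\in I}\frac{dy_i}{y_i}$, the asserted value.

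The only delicate point — and the one I expect to be the real obstacle — is justifying the convergence of this last integral and the passage to the limit. Near each face $y_i=0$ (for $i\in I$) the hypothesis that $\bpart^kH$ vanishes on $\{y_i=0\}$, combined with the boundedness of $\partial_i\bpart^kH$ (a consequence of its sufficient decay), gives $|h(y_I)|\le C\,y_i$; hence $h$ is divisible by $\prod_{i\in I}y_i$ and $h\prod_{i\in I}y_i^{-1}$ stays bounded near the faces and the corner. At infinity the sufficient decay of $\bpart^kH$ supplies integrability. To run dominated convergence uniformly for small ${\rm Re}(u_i)>0$ I would interpolate, by a geometric–mean inequality, between the face bound $|h|\le C\,y_i$ and the decay bound, thereby dominating $|h(y_I)|\prod_{i\in I}y_i^{{\rm Re}(u_i)-1}$ by a fixed integrable function; this simultaneously proves the convergence of the limiting integral and legitimises the interchange of limit and integration.
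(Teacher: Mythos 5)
Your overall strategy --- shift $s$ by integration by parts, exploit the zeros of $1/\Gamma$ at the non-positive integers, and finish with the fundamental theorem of calculus --- is the same as the paper's, and your computation in (ii) (substituting $u_i=s_i+k_i$ and letting $u_I\to 0$) is exactly what is needed. But there is a genuine gap in the one-dimensional building block, and it propagates through part (i). You apply the identity $\frac{1}{\Gamma(s)}\int_0^{+\infty}f(y)y^{s-1}dy=\frac{1}{\Gamma(s+m)}\int_0^{+\infty}(\partial^m f)(y)y^{s+m-1}dy$ on all of $[0,+\infty[$ to the slice functions of $H$. The hypothesis of \emph{d\'ecroissance suffisante} only gives $|\bpart^\ell H(y)|=O\big(1/(1+|L(y)|)\big)$, i.e.\ decay like $1/y_j$ in each single variable; hence for $m=k_j+1$ the shifted integrand $(\partial_j^{k_j+1}f)(y_j)\,y_j^{s_j+k_j}$ is only $O\big(y_j^{{\rm Re}(s_j)+k_j-1}\big)$ at infinity, and the shifted integral diverges absolutely for every $s_j$ with ${\rm Re}(s_j)\geq -k_j$. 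So the integrated-by-parts representation is not valid in any neighbourhood of $s_j=-k_j$: it can neither furnish the analytic continuation nor be identified with it at that point, and the ensuing Fubini interchanges over the remaining variables are not absolutely convergent.

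The paper avoids this by never integrating by parts near infinity: it reuses the cutoff decomposition $H=\sum_J H_J$ from the proof of lemma~\ref{D21}, applies~(\ref{partie}) only in the variables $j\in J$ (where $H_J$ is supported in $y_j\leq 2$, so all shifted integrals converge trivially and the boundary terms vanish), and for $j\notin J$ observes that the integral already converges at $s_j=-k_j$ (one is in the region ${\rm Re}(s_j)<1/n$), so the factor $1/\Gamma(s_j)$ makes that piece vanish --- consistently with the right-hand side, since $H_J$ then vanishes near $y_j=0$ and so does $\partial_j^{k_j}H_J(\,\cdot\,,0)$. You should insert this decomposition at the start of your argument; once it is in place, your treatment of (ii), including the domination argument near the faces $y_i=0$ and at infinity for the limiting integral, goes through essentially as you describe.
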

\begin{proof}
Pour d\'emontrer le (i),  
on d\'ecompose $H$ sous la forme $H=\sum_JH_J$ comme dans la preuve du lemme~\ref{D21}, et
on se ram\`ene par lin\'earit\'e \`a
$H=H_J$ et, par r\'ecurrence, au cas o\`u $I^c$ ne contient qu'un
\'el\'ement que l'on peut supposer \^etre \'egal \`a $n$ sans nuire
\`a la g\'en\'eralit\'e du raisonnement.  Il y a 2 cas.  

$\bullet$ Si
$J$ ne contient pas $n$, alors $\Lambda(H,(s_I,-k_n))$ est identiquement
nulle si $k_n\in{\bf N}$ comme on l'a vu au cours de la d\'emonstration du (i).
D'autre part, $H_J$ est identiquement nul dans un voisinage de ${\bf R}_+^I$
et donc $\partial_n^{k_n}H_J(y_I)$ est identiquement nulle; d'o\`u
l'\'egalit\'e dans ce cas l\`a.

$\bullet$ 
Si $n\in J$, la formule~(\ref{partie}) nous donne
$$\Lambda(H_J,(s_I,-k_n))=\Lambda(\partial_n^{k_n+1}H_J,(s_I,1))=
\prod_{i=1}^{n-1}\tfrac{1}{\Gamma(s_i)}\int_{{\bf R}_+^n}\partial_n^{k_n+1}
H_J(y)\Bigl(\prod_{i=1}^{n-1}y_i^{s_i}{\tfrac{dy_i}{y_i}}\Bigr)dy_n.$$
Sur le produit des demi-plans ${\rm Re}\, s_i>0$ (resp. ${\rm Re}\, s_i<{1\over n}$)
pour $i\in J$ (resp. $i\notin J$), l'int\'egrale est absolument convergente
et on peut commencer par int\'egrer par rapport \`a $y_n$ et l'on obtient
\begin{align*}
\Lambda(H_J,(s_I,-k_n))= &
\prod_{i=1}^{n-1}\tfrac{1}{\Gamma(s_i)}\int_{{\bf R}_+^{n-1}}\Bigl(\int_0^{+\infty}
\partial_n^{k_n+1}
H_J(y_I,y_n)dy_n\Bigr)
\bigl(\prod_{i=1}^{n-1}y_i^{s_i}{\tfrac{dy_i}{y_i}}\bigr)\\
= & \prod_{i=1}^{n-1}\tfrac{1}{\Gamma(s_i)}\int_{{\bf R}_+^{n-1}}
\partial_n^{k_n}
H_J(y_I,0)
\bigl(\prod_{i=1}^{n-1}y_i^{s_i}{\tfrac{dy_i}{y_i}}\bigr)
\end{align*}
ce que l'on voulait.

Le (ii) s'obtient apr\`es une int\'egration par partie utilisant la formule~(\ref{partie})
pour se ramener au cas $k=(0,\dots,0)$, et le (i) pour se d\'ebarrasser des $y_i$, pour $i\in I^c$.
\end{proof}
\begin{rema}\phantomsection\label{D21.5}
Dans la preuve des lemmes~\ref{D21} et~\ref{D21.1}, 
on n'utilise l'int\'egration par partie pour \'etablir l'existence
d'un prolongement analytique que pour $J\neq\emptyset$. On peut donc affaiblir un peu les conditions
et demander seulement que la restriction \`a
$[2,+\infty[^I$ (ou m\^eme $[c,+\infty[^I$, avec $c>0$) soit \`a d\'ecroissance suffisante
\`a l'infini, sans imposer de r\'egularit\'e (juste la mesurabilit\'e).  Par contre, on a vraiment
besoin que $H$ soit ${\cal C}^\infty$ au voisinage des hyperplans $y_i=0$, pour tout $i\in I$.
\end{rema}

\subsubsection{Application aux fonctions $L$}\label{SSS23}
Soit $\psi=\prod_{i=1}^n\epsilon_i^{a_i}$ avec $a_i
\in\Z/2\Z$ et soit $\phi\in
{\cal S}(F,\psi,A)$.

Soit $V$ un sous-groupe libre (c'est-\`a-dire
ne contenant pas $-1$) d'indice fini de
$U_F$ et ${\cal B}\in{\rm Shin}(F/V)$ une d\'ecomposition de Shintani modulo
$V$.  On suppose que $\phi$ est ${\cal B}$-r\'eguli\`ere.

\begin{theo}\phantomsection\label{D2}
{\rm   (i)}  La fonction $L(\phi,\psi,s)$ poss\`ede
un prolongement analytique \`a tout le plan complexe

{\rm (ii)}  
Si $k\in{\bf N}$, soit $I_k=\{i\in\{1,\dots,n\},\ 
a_i\equiv k\ [2]\}$.  Alors, si $k\neq0$ ou si
$\phi(0) =0$, la fonction $L(\phi,\psi,s)$ a un
z\'ero d'ordre $\geq |I_k|$ en $s=-k$ et
$$\lim_{s\rightarrow -k}(s+k)^{-|I_k|}L(\phi,\psi,s)=
{{2^{n-|I_k|}}\over{[U_F:V]}}\int_{{\bf R}^{I_k}}
\nabla^k F(\phi,{\cal B},y)\prod_{i\in I_k}{{dy_i}\over{y_i}},$$
o\`u $\nabla=\prod_{i=1}^n{\partial\over{\partial y_i}}$ et
${\bf R}^{I_k}$ est le sous-${\bf R}$-espace vectoriel de dimension
$|I_k|$ d'\'equation $y_i=0$ si $i\notin I_k$.
\end{theo}
\begin{proof}
D'apr\`es la formule int\'egrale du cor.\,\ref{D18}, valable pour ${\rm Re}(s)>0$,
$$L(\phi,\psi,s)={1\over{[U_F:V]}}{1\over{\Gamma(s)^n}}\int_{{\bf R}^n}
F(\phi,{\cal B},y)\psi(y)\epsilon({\rm N}(y))|{\rm N}(y)|^{s-1}dy$$
Pour d\'emontrer l'existence
d'un prolongement analytique au demi-plan ${\rm Re}\, s<{1\over n}$,
il suffit
d'appliquer le lemme~\ref{D21} \`a la fonction 
$$H(\phi,\psi,{\cal B},y)=
\sum_{\epsilon\in\{\pm1\}^n}\psi(\epsilon){\rm N}(\epsilon)F(\phi,{\cal B},
\epsilon y)$$ et \`a $(s_1,\dots,s_n)=(s,\dots,s)$.
Le reste de l'\'enonc\'e est alors une cons\'equence du lemme~\ref{D21.1}.
%\mar{Ajouter des d\'etails...}
\end{proof}

\subsubsection{Le cas $\phi(0)\neq 0$}
On va maintenant s'int\'eresser au cas o\`u $\phi(0)\neq 0$, et au comportement
de $L(\phi,\psi,s)$ en $s=0$, si $\psi=1$ (si $\phi(0)=0$ il y a un z\'ero d'ordre $n$
en $s=0$ d'apr\`es le (ii) du th.\,\ref{D2}).

Si $V$ est un sous-groupe $V$ d'indice fini de~$U_F$,
on note ${\rm Reg}(V)$ le r\'egulateur de $V$, i.e.~le volume
d'un domaine fondamental de $\Tr\cap \{y,\ |{\rm N}(y)|=1\}$ modulo $V$
relativement \`a la mesure $d^\dual y:=\prod_{i=1}^{n-1}\frac{dy_i}{y_i}$.
On a alors ${\rm Reg}(U_F):=\frac{1}{[U_F:V]}{\rm Reg}(V)$ pour tout $V$.

\begin{theo}\phantomsection\label{residu}
Si $\phi(0)\neq 0$ et $\psi=1$, alors $L(\phi,\psi,s)$
a un z\'ero d'ordre $n-1$ en $s=0$, et on a
$$\lim_{s\to 0}s^{1-n}L(\phi,\psi,s)=-\phi(0)\,{\rm Reg}(U_F)$$ 
\end{theo}
\begin{proof}
On fixe $V$ d'indice fini dans~$U_F^+$.
Soit ${\cal B}_0\in{\rm Shin}(\Tp/V)$, et soit ${\cal C}=\sqcup_{B\in{\cal B}_0} C_B$, et donc
${\cal C}$ est un domaine fondamental de $\Tp$ modulo $V$, et
${\rm Reg}(V)$  est le volume de 
$${\cal C}^{=1}:={\cal C}\cap \{y,\ {\rm N}(y)=1\}$$
Maintenant, si $n_B=\epsilon(\det B)$, alors
$\sum_B n_B B\in{\rm Shin}(\Tc/V)$ (rem.\,\ref{S6} (ii)),
et donc $\sum_B n_B B^\vee\in{\rm Shin}(\Tc/V)$ (prop.\,\ref{Shi7}),
et ${\cal B}:=\sum_B n_B B^\vee\in{\rm Shin}(\Tr/V)$ (prop.\,\ref{R2}).

De plus, si $n_B\neq 0$, alors $e_n\in C(f_{B,1}^\vee,\dots,C_{B,n}^\vee)$
car ce c\^one est l'ensemble des $x$ tels que ${\rm Tr}(f_{B,i}x)>0$ pour tout $i$,
ce qui est le cas de $e_n$ puisque les $f_{B,i}$ sont \`a coordonn\'ees~$>0$.
Il s'ensuit que $\chi_{\cal B}(0,y)=\sum_B {\bf 1}_{C_B^0}={\bf 1}_{\cal C}$ presque partout.

Si $M>0$, soit ${\cal C}(M):={\cal C}\cap\{y\in\Tp,\ {\rm N}(y)\geq M\}$.
Le changement de variable $t=y_1\cdots y_n$, $z_1=t^{-1/n}y_1,\dots,z_n=t^{-1/n}y_n$ nous donne,
si ${\rm Re}(s)<0$,
$$\int_{{\cal C}(M)}{\rm N}(y)^s\prod_{i=1}^n\tfrac{dy_i}{y_i}=
\int_{t\geq M}\int_{z\in {\cal C}^{=1}}t^s\,d^\dual z\,\tfrac{dt}{t}=
-\tfrac{M^s}{s}{\rm Reg}(V)$$
qui a un prolongement m\'eromorphe \`a $\C$ tout entier.

D\'ecomposons $\F$ sous la forme 
$$\F=F^0(\phi,{\cal B},y) +\phi(0){\bf 1}_{{\cal C}(M)}$$
Alors $$H(y)=\sum_{\epsilon\in\{\pm1\}^n}N(\epsilon)F^0(\phi,{\cal B},\epsilon y)$$ est \`a
d\'ecroissance suffisante d'apr\`es le (iii) du lemme~\ref{D7}.
Le lemme~\ref{D21} (et la rem.\,\ref{D21.5}) montre alors que
$$\tfrac{1}{\Gamma(s)^n}\int_{\Tr}\F|{\rm N}(y)|^s\prod\tfrac{dy_i}{y_i}
:=\Lambda(H,s,\dots,s)-\phi(0){\rm Reg}(V)\tfrac{M^s}{s\Gamma(s)^n}$$
a un prolongement analytique \`a ${\rm Re}(s)<\frac{1}{n}$, et il
est facile de voir que
le r\'esultat ne d\'epend pas du choix de $M$.
On en d\'eduit que
$$\int_{\Tr}F(\phi,{\cal B},Ny)|{\rm N}(y)|^s\prod\tfrac{dy_i}{y_i}
=N^{-ns}\int_{\Tr}F(\phi,{\cal B},y)|{\rm N}(y)|^s\prod\tfrac{dy_i}{y_i}$$
D\'efinissons $\phi_N$ par $\phi_N(\alpha):=\phi(\alpha)-\phi(\alpha/N)$,
on a 
\begin{equation}\label{euler}
\phi_N(0)=0
\quad{\rm et}\quad
L(\phi_N,\psi,s)=(1-N^{-ns})L(\phi,\psi,s)
\end{equation}
Par ailleurs, 
le (o) de la rem.\,\ref{partit} nous donne
$$\sum_{\alpha}\phi(\tfrac{\alpha}{N})\chi_{\cal B}(\alpha,y)q^\alpha=
\sum_{\beta}\phi({\beta})\chi_{\cal B}(\beta,Ny)q^{N\beta}$$
puisque
$\chi_{\cal B}(N\beta,y)=\chi_{\cal B}(\beta,y)=\chi_{\cal B}(\beta,Ny)$.
Il s'ensuit que 
\begin{align*}
F(\phi_N,{\cal B},y)&=F(\phi,{\cal B},y)-F(\phi,{\cal B},Ny)\\
\int_{\Tr}F(\phi_N,{\cal B},y)|{\rm N}(y)|^s\prod\tfrac{dy_i}{y_i}
&=(1-N^{-ns})\int_{\Tr}F(\phi,{\cal B},y)|{\rm N}(y)|^s\prod\tfrac{dy_i}{y_i}
\end{align*}
o\`u le membre de gauche de la seconde ligne converge pour ${\rm Re}(s)>0$
d'apr\`es le (i) du cor.~\ref{D18} et admet un prolongement analytique \`a $\C$
d'apr\`es le lemme~\ref{D21}. On d\'eduit alors du (ii) du cor.~\ref{D21}
et de la formule~(\ref{euler}) que
$$[U_F:V]\,L(\phi,\psi,s)=\Lambda(H,s,\dots,s)-\phi(0){\rm Reg}(V)\tfrac{M^s}{s\Gamma(s)^n}$$
Comme $\Lambda(H,s,\dots,s)$ a un z\'ero d'ordre $n$ en $s=0$ d'apr\`es le (ii) du lemme~\ref{D21.1}
(et la rem.\,\ref{D21.5}),
il s'ensuit que $L(\phi,\psi,s)$ a un z\'ero d'ordre $n-1$ en $s=0$, de coefficient dominant
$\frac{1}{[U_F:V]}\phi(0){\rm Reg}(V)=
-\phi(0){\rm Reg}(U_F)$, ce qui permet de conclure.
\end{proof}

\Subsection{Valeurs sp\'eciales de fonctions~$L$ de Hecke}
Soit $\chi:\A_F^\dual/ F^\dual\to\C^\dual$ un caract\`ere de Dirichlet (i.e.~un caract\`ere
de Hecke d'ordre fini).  Si $v$ est une place finie de $F$, on note $\chi_v:F_v^\dual\to \C^\dual$
la restriction de $\chi$.  

$\bullet$ Si $v\mid\infty$, on a $F_v=\R$, et il existe $a_v\in\{0,1\}$ tel que
$\chi_v=\epsilon^{a_v}$.  

$\bullet$ Si $v$ est finie correspondant \`a un id\'eal premier ${\goth q}$, 
alors $\chi_v=1$ sur $\O_{\goth q}^\dual$ sauf
pour $v\in S(\chi)$, o\`u $S(\chi)$ est fini (et peut \^etre $\emptyset$).
Si $v\in S(\chi)$, il existe $n_v\geq 1$, minimal, tel que $\chi_v=1$ sur $1+{\goth q}^{n_v}$;
on pose ${\goth f}_\chi=\prod_{v\in S(\chi)}{\goth q}^{n_v}$ (c'est le conducteur de $\chi$).

Si $v\notin S(\chi)$ est finie, on pose $L_v(\chi,s)=(1-\chi_v(\varpi_v)({\rm N}v)^{-s})^{-1}$ o\`u
$\varpi_v$ est une uniformisante de $F_v$,
et on d\'efinit la fonction~$L$ (de Hecke) associ\'ee \`a $\chi$ par la formule
$L(\chi,s)=\prod_v L_v(\chi,s)$ (le produit \'etant sur les $v\notin S(\chi)$, finies).
Le produit converge absolument pour ${\rm Re}(s)>1$ et d\'efinit une fonction holomorphe
sur ce demi-plan.  

\subsubsection{La formule analytique du nombre de classes}
On dispose
du r\'esultat classique suivant (d\^u \`a Hecke~\cite{hecke2} \`a part pour le calcul du r\'esidu --- formule
analytique du nombre de classes --- qui est d\^u \`a Dedekind).
\begin{theo}\phantomsection\label{heck1}
{\rm (i)} Si $\chi\neq 1$, $L(\chi,s)$ poss\`ede un prolongement holomorphe \`a $\C$ tout entier.

{\rm (ii)} Si $\chi=1$, alors $L(\chi,s)=\zeta_F(s)$ poss\`ede un prolongement analytique \`a $\C$ tout entier,
holomorphe en dehors d'un p\^ole simple en $s=1$, 
de r\'esidu\footnote{Si $F=\Q$, on a $U_F=\{\pm1\}$ et ${\rm Reg}(U_F)=\frac{1}{2}$.}
 $2^n\frac{h_F{\rm Reg}(U_F)}{\sqrt{\Delta_F}}$.

{\rm (iii)} Si on pose\footnote{$\Gamma_\R(s)=\pi^{-s/2}\Gamma(s/2)$ o\`u $\Gamma$ est la fonction $\Gamma$
d'Euler.}
 $\Lambda(\chi,s)=\big(\prod_{v\mid\infty}\Gamma_\R(s+a_v)\big)L(\chi,s)$, alors il existe $N(\chi)\in\N$
et $w(\chi)\in\C^\dual$ tels que l'on ait l'\'equation fonctionnelle
$$\Lambda(\chi,s)=w(\chi)N(\chi)^{-s}\Lambda(\chi^{-1},1-s)$$
\end{theo}

\begin{rema}\phantomsection\label{heck2}
On d\'eduit de l'\'equation fonctionnelle du (iii)
 et du comportement de la fonction $\Gamma$ aux entiers n\'egatifs que:

$\bullet$ Si $k\geq 0$, si $I_k=\{v\mid\infty,\ a_v\equiv k\ [2]\}$, et si $\chi\neq 1$ ou si $k\neq 0$,
alors $L(\chi,s)$ a un z\'ero d'ordre $|I_k|$ en $s=-k$.

$\bullet$ $\zeta_F$ a un z\'ero d'ordre $n-1$ en $s=0$, et 
$\lim_{s\to 0}s^{1-n}\zeta(s)=-h_F\,{\rm Reg}(U_F)$.

Nous donnons ci-dessous (th.\,\ref{heck4}) une autre preuve de ces faits, sans utiliser l'\'equation fonctionnelle.
\end{rema}

On peut aussi voir $\chi$ comme un caract\`ere du groupe des id\'eaux fractionnaires de
$F$, \'etranger \`a $S(\chi)$: si ${\goth a}$ est un tel id\'eal, on pose
$\chi({\goth a})=\chi((\alpha_v)_v)$ o\`u $\alpha_v=1$ si $v\mid\infty$ ou si
$v\in S(\chi)$, et $v(\alpha_v)=v({\goth a})$ si $v\notin S(\chi)$ est finie.
  On a alors $L(\chi,s)=\sum_{{\goth a}\subset \O_F}\frac{\chi({\goth a})}
{{\rm N}{\goth a}^s}$.

Fixons un syst\`eme ${\goth a}_1,\dots{\goth a}_h$ de repr\'esentants du groupe des classes
d'id\'eaux; on impose que ${\goth a}_i\subset\O_F$ et ${\goth a}_i$ \'etranger \`a $S(\chi)$ pour tout $i$.
Alors tout id\'eal de $\O_F$ \'etranger \`a $S(\chi)$ s'\'ecrit sous la forme
${\goth a}_i(\alpha)$ avec $\alpha\in{\goth a}_i^{-1}$ \'etranger \`a $S(\chi)$,
et cette \'ecriture est unique \`a multiplication pr\`es de $\alpha$ par un \'el\'ement de $U_F$.
On a donc
$$L(\chi,s)=\sum_{i=1}^h\frac{\chi({\goth a_i})}
{{\rm N}{\goth a_i}^s}\sum'_{\alpha\in{\goth a}_i^{-1}/U_F}\frac{\chi((\alpha))}{|{\rm N}(\alpha)|^s}$$
Maintenant, si on revient \`a la d\'efinition de $\chi((\alpha))$ et que l'on utilise l'invariance
de $\chi$ par $F^\dual$, on voit que $\chi((\alpha))=\chi((\alpha'_v)_v)$
o\`u $\alpha'_v=\alpha^{-1}$ si $v\mid\infty$ ou si  
$v\in S(\chi)$, et $\alpha'_v=1$ sinon.  On a alors 
$$\chi((\alpha))=\prod_{v\mid\infty}\epsilon_v(\alpha)^{a_v}
\prod_{v\in S(\chi)}\chi_v^{-1}(\alpha)$$
 On en d\'eduit que
\begin{equation}\label{heck3}
L(\chi,s)=\sum_{i=1}^h\frac{\chi({\goth a_i})}
{{\rm N}{\goth a_i}^s} L(\phi_{\chi,{\goth a}_i},\psi,s)
\end{equation}
o\`u $\psi=\prod_{v\mid\infty}\epsilon_v^{a_v}$ et $\phi_{\chi,{\goth a}_i}(\alpha)=0$ si $\alpha\notin{\goth a}_i^{-1}$
ou si $(\alpha)$ n'est pas \'etranger \`a $S(\chi)$, et
$\phi_{\chi,{\goth a}_i}(\alpha)=\prod_{v\in S(\chi)}\chi_v^{-1}(\alpha)$ si $\alpha \in {\goth a}_i^{-1}$
et $(\alpha)$ est \'etranger \`a $S(\chi)$ (si $S(\chi)=\emptyset$ cette condition
est vide et le produit vaut $1$ comme tout produit vide qui se respecte). 
Notons que $\phi_{\chi,{\goth a}_i}$ est p\'eriodique de p\'eriode ${\goth f}_\chi$,
et que
$\phi_{\chi,{\goth a}_i}(0)=0$ sauf si $S(\chi)=\emptyset$ (o\`u l'on a $\phi_{\chi,{\goth a}_i}(0)=1$).

\begin{theo}\phantomsection\label{heck4}
{\rm (i)} $L(\chi,s)$ admet un prolongement analytique \`a $\C$ tout entier, holomorphe
en dehors d'un p\^ole simple \'eventuel en $s=1$ si $\chi$ est partout non ramifi\'e.

{\rm (ii)} Si $I_k=\{v\mid\infty,\ a_v\equiv k\ [2]\}$, et si $\chi\neq 1$ ou si $k\neq 0$,
alors $L(\chi,s)$ a un z\'ero d'ordre $\geq |I_k|$ en $s=-k$.

{\rm (iii)} $\zeta_F$ a un z\'ero d'ordre $n-1$ en $s=0$, et $\lim_{s\to 0}s^{1-n}\zeta(s)=-h_F\,{\rm Reg}(U_F)$.
\end{theo}
\begin{proof}
Compte-tenu de la formule~(\ref{heck3}), le (i) est une cons\'equence du (i) du th.\,\ref{D2}
et de la rem.\,\ref{smoo2.1} qui permet de localiser les p\^oles \'eventuels.  

Le (iii) r\'esulte du th.\,\ref{residu}, 
et le (ii) r\'esulte du (ii) du th.\,\ref{D2} 
sauf si $k=0$, $\chi$ est partout non ramifi\'e et $a_v=0$
pour tout $v\mid\infty$ o\`u il faut utiliser
le th.\,\ref{residu} et la nullit\'e de $\sum_{i=1}^h\chi({\goth a}_i)$
pour en d\'eduire que le coefficient du terme de degr\'e~$n-1$ 
s'annule et donc qu'il y a bien un z\'ero 
d'ordre~$\geq n$ en $s=0$.
\end{proof}

\begin{rema}\phantomsection\label{heck5}
On d\'eduit du
(ii) du th.\,\ref{D2}
une formule explicite pour $\lim_{s\to -k}(s+k)^{-|I_k|}L(\chi,s)$.  Il est possible
que l'on puisse exprimer le r\'esultat en termes de fonctions sp\'eciales \'evalu\'ees
en des \'el\'ements de $F$, mais il est peu probable que l'on puisse en extraire les
\'el\'ements dans les extensions ab\'eliennes de $F$ pr\'edits par les conjectures
de Stark et leurs avatars aux entiers~$<0$.
\end{rema}

\subsubsection{Int\'egralit\'e}
Soit $\phi_{\chi,{\goth a_i},{\goth q}}$ la fonction obtenue
\`a partir de $\phi_{\chi,{\goth a_i}}$ comme dans le lemme~\ref{smoo1}.
Comme les ${\goth a_i}{\goth q}^{-1}$ forment un
syst\`eme de repr\'esentants du groupes des classes d'id\'eaux, on a
\begin{equation}\label{heck10}
\sum_{i=1}^h\sum_{i=1}^h\frac{\chi({\goth a}_i)}{{\rm N}{\goth a}_i^s}
L(\phi_{\chi,{\goth a_i},{\goth q}},\psi,s)=(1-\chi({\goth q}){\rm N}{\goth q}^{1-s})L(\chi,s)
\end{equation}
\begin{rema}\phantomsection\label{heck11}
Si $\psi\neq \epsilon\circ{\rm N}^{k+1}$, on a $L(\chi,-k)=0$
d'apr\`es le (ii) du th.\,\ref{heck4} (sauf si $F=\Q$, $\chi=1$ et $k=0$, o\`u l'on a
$\zeta(0)=-\frac{1}{2}$).
\end{rema}

\begin{theo}\phantomsection\label{heck12}
Si $\psi= \epsilon\circ{\rm N}^{k+1}$, alors 
$$(1-\chi({\goth q}){\rm N}{\goth q}^{1+k})L(\chi,-k)\in 2^{n-1}\Z[\chi]$$
 pour tout ${\goth q}$ premier \`a ${\goth f}_\chi$.
\end{theo}
\begin{proof}
Choisissons un sous-groupe $V$ de $U_F$, libre, d'indice~$2$.
Choisissons aussi ${\cal B}\in{\rm Shin}(\Tr/V)$. Alors
$$(1-\chi({\goth q}){\rm N}{\goth q}^{1+k})L(\chi,-k)=
2^{n-1}\sum_{i=1}^h\chi({\goth a}_i){\rm N}{\goth a}_i^k
\nabla^k F(\phi_{\chi,{\goth a_i},{\goth q}},{\cal B},0)$$
D'apr\`es le (ii) du th.\,\ref{D2}.
Maintenant, il r\'esulte de la formule~(\ref{F5}) que, si $N{\goth q}=p$, alors
$\nabla^k F(\phi_{\chi,{\goth a_i},{\goth q}},{\cal B},0)={\rm Tr}_{\Q(\bmu_p,\chi)/\Q(\chi)}x$
avec $x\in \frac{1}{{\rm N}{\goth a}_i^k}\frac{1}{(\zeta_p-1)^{n(k+1)}}\Z[\bmu_p,\chi]$ 
(la puissance de $\zeta_p-1$ vient de ce que chaque d\'eriv\'ee augmente d'au plus~$1$
le degr\'e du d\'enominateur de la fraction rationnelle, et celle de $\frac{1}{{\rm N}{\goth a}_i^k}$
vient de ce que les $\alpha$ qui interviennent appartiennent \`a ${\goth a}_i^{-1}$, et appliquer
$\nabla$ \`a $e^{-{\rm Tr}(\alpha y)}$ donne un r\'esultat entier \`a multiplication
pr\`es par $\frac{1}{{\rm N}{\goth a}_i}$).
Les puissances de ${\rm N}{\goth a}_i$ se compensent,
et comme
${\rm Tr}_{\Q(\bmu_p,\chi)/\Q(\chi)}z\in\Z[\chi]$ si $z\in \frac{1}{(\zeta_p-1)^{n(k+1)}}\Z[\bmu_p,\chi]$
et si $p>1+n(k+1)$,
on en d\'eduit le r\'esultat pour tout ${\goth q}$ de degr\'e~$1$ sauf un nombre fini.

On conclut en utilisant le th\'eor\`eme de \v{C}ebotarev dont une cons\'equence est
que, si ${\goth q}$ est premier \`a ${\goth f}_\chi$, et si $N\geq 1$, il existe
${\goth q}'$ de degr\'e~$1$, avec $\chi({\goth q}')=\chi({\goth q})$ et ${\rm N}({\goth q}')\equiv
{\rm N}({\goth q})$ modulo $N$.
\end{proof}

\begin{rema}\phantomsection\label{heck13}
(i)
Deligne et Ribet~\cite{DR} ont prouv\'e que, si $\phi$ est \`a valeurs
dans~$\Z$, alors
$L(\phi_{\goth q},\psi,0)\in 2^{n}\Z$ sauf dans le cas suivant o\`u 
$L(\phi_{\goth q},\psi,0)\in 2^{n-1}\Z\moins 2^n\Z$:

$\bullet$ $U_F$ n'a pas d'\'el\'ement de norme~$1$ mais a des \'el\'ements de tous les signes possibles
compatibles avec cette restriction.

$\bullet$ $\phi(0)$ est impair.

$\bullet$ $\sigma_{\goth q}$ est non trivial dans ${\rm Gal}(K/F)$ o\`u $K$
est l'extension de $F$ obtenue en rajoutant les racines
carr\'ees des \'el\'ements de $U_F^+$ ($K$ est un corps totalement r\'eel et la
premi\`ere condition implique que $[K:F]=2$ ou $1$).

(ii) Gross~\cite[\S5]{G} a r\'einterpr\'et\'e ce r\'esultat en termes du r\'egulateur apparaissant
dans sa conjecture~\cite[conj.\,4.1]{G} sur les fonctions~$L$ en $s=0$ (version ``alg\`ebre de groupe'').
\end{rema}


\begin{thebibliography}{99}
%\bibitem{B}
%{\sc D.~Barsky},
%Fonctions z\^eta $p$-adiques d'une classe de rayon des corps 
%de nombres totalement r\'eels,
%Groupe d'Etude d'Analyse Ultram\'etrique (5e ann\'ee: 1977/78), 
%Exp. No. 16, \url{http://www.numdam.org/item/GAU_1977-1978__5__A9_0.pdf}.

\bibitem{BCG}
{\sc N.\,Bergeron, P.\,Charollois, L.\,Garc\'{\i}a}, 
Eisenstein cohomology classes for ${\rm GL}_N$ over imaginary quadratic fields,
J. Reine Angew. Math.~{\bf 797} (2023), 1--40.

\bibitem{CN1}
{\sc P.\,Cassou-Nogu\`es},
Fonctions $L$ $p$-adiques des corps de nombres totalement r\'eels,
S\'eminaire Delange-Pisot-Poitou, 19e ann\'ee: 1977/78, 
Th\'eorie des nombres, Fasc. 2, Exp. No. 33, 
\url{http://www.numdam.org/item/SDPP_1977-1978__19_2_A7_0.pdf}.

\bibitem{CN2}
{\sc P.\,Cassou-Nogu\`es},
Valeurs aux entiers n\'egatifs des fonctions z\^eta et fonctions z\^eta $p$-adiques,
Invent. Math.\,{\bf 51} (1979), 29--59.

\bibitem{CD}
{\sc P.\,Charollois, S.\,Dasgupta, M.\,Greenberg},
Integral Eisenstein cocycles on  ${\rm GL}_n$, II: Shintani's method,
Comment. Math. Helv.\,{\bf 90} (2015), 435--477.

\bibitem{Cz1}
{\sc P.\,Colmez},
Alg\'ebricit\'e de valeurs sp\'eciales de fonctions $L$,
  Invent. Math.\,{\bf 95} (1989), 161--205.

\bibitem{Cz2}
{\sc P.\,Colmez},
R\'esidu en $s=1$ des fonctions z\^eta $p$-adiques,
Invent. Math.\,{\bf 91} (1988), 371--389.

%\bibitem{Cz3}
%{\sc P.~Colmez},
%Fonctions z\^eta $p$-adiques en $s=0$,
%J. Reine Angew. Math.~{\bf 467} (1995), 89--107. 

\bibitem{CS}
{\sc P.\,Colmez, L.\,Schneps},
$p$-adic interpolation of special values of Hecke $L$-functions,
 Compositio Math.\,{\bf 82} (1992), 143--187.

\bibitem{DR}
{\sc P.\,Deligne, K.\,Ribet},
Values of abelian $L$-functions at negative integers over totally real fields,
Invent. Math.\,{\bf 59} (1980), 227--286


%\bibitem{DF1}
%{\sc F.\,Diaz y Diaz, E.\,Friedman},
%Colmez cones for fundamental units of totally real cubic fields,
%J. Number Theory~{\bf 132} (2012), 1653--1663.

\bibitem{DF2}
{\sc F.\,Diaz y Diaz, E.\,Friedman},
Signed fundamental domains for totally real number fields,
Proc. Lond. Math. Soc.\,{\bf 108} (2014), 965--988.

\bibitem{G}
{\sc B.\,Gross},
On the values of abelian $L$-functions at $s=0$,
J. Fac. Sci. Univ. Tokyo Sect. IA Math.\,{\bf 35} (1988), 177--197.



\bibitem{hecke2}
{\sc E.\,Hecke},
Eine neue Art von Zetafunktionen und ihre Beziehungen zur Verteilung der Primzahlen. II,
Math. Z.\,{\bf 6} (1920), 11--51 (= Math.\,Werke~14).

\bibitem{klingen}
{\sc H.\,Klingen},
\"Uber den arithmetischen Charakter der Fourierkoeffizienten von Modulformen,
Math. Ann.~{\bf 147} (1962), 176--188.

\bibitem{siegel}
{\sc C.\,L.\,Siegel},
\"Uber die Fourierschen Koeffizienten von Modulformen,
Nachr. Akad. Wiss. G\"ottingen Math.-Phys. Kl. II~{\bf 1970} (1970), 15--56.

\bibitem{Sh}
{\sc T.\,Shintani},
On evaluation of zeta functions of totally real 
algebraic number fields at non-positive integers,
J. Fac. Sci. Univ. Tokyo Sect. IA Math.\,{\bf 23} (1976), 393--417


\bibitem{Wn}
{\sc U.\,Weselmann}, 
An elementary identity in the theory of Hecke $L$-functions,
 Invent. Math.\,{\bf 95} (1989), 207--214.





\end{thebibliography}
\end{document}